\theoremstyle{plain}
\newtheorem{theorem}{Theorem}
\newtheorem{corollary}{Corollary}
\newtheorem{lemma}{Lemma}
\newtheorem{proposition}{Proposition}
\numberwithin{equation}{section}
\begin{document}
\title[Riesz means and bilinear Riesz means on M\'{e}tivier groups]{Riesz
means and bilinear Riesz means on M\'{e}tivier groups}

\begin{abstract}
In this paper, we investigate the $L^{p}$-boundedness of the Riesz means and
the $L^{p_{1}}\times L^{p_{2}}\rightarrow L^{p}$ boundedness of the bilinear
Riesz means on M\'{e}tivier groups. M\'{e}tivier groups are generalization
of Heisenberg groups and general H-type groups. Because general M\'{e}tivier
groups only satisfy the non-degeneracy condition and have high-dimensional
centre, we have to use different methods and techniques from those on
Heisenberg groups and H-type groups.
\end{abstract}

\author{Min Wang$^{1}$}
\address{School of Science, China University of Geosciences in Beijing, Beijing,
100083, P. R. China, }
\email{wangmin09150102@163.com}

\author{Hua Zhu$^{1,*}$}
\address{Department of Basic Sciences, Beijing International Studies University, Beijing,
100024, P. R. China, }
\email{zhuhua@pku.edu.cn}
\thanks{$1$\ These two authors contributed to this work equally and should be regarded as co-first authors. }
\thanks{$*$\ This author is supported by ``Young top-notch talent'' Program concerning teaching faculty development of universities affiliated to Beijing Municipal Government(2018-2020).}
\date{}
\subjclass[2010]{Primary 43A80, 43A85, 43A45}
\keywords{M\'{e}tivier groups, Riesz means, bilinear Riesz means, restriction
theorem}
\maketitle

\allowdisplaybreaks

\section{Introduction}

%The Bochner-Riesz means problem has great relation with the convergence of the Fourier inversion. In Euclidean space, there are many famous results, but the
%well known Bochner-Riesz conjecture is still open. The bilinear
%Bochner-Riesz means problem originates from the study of the summability of
%the product of two $n$-dimension Fourier series. The norm convergence of this sum is equivalent to the $L^{p_{1}}\times L^{p_{2}}\rightarrow L^{p}$ boundedness of the bilinear multiplier operator:
%\begin{equation*}
%B^{\alpha }(f,g)(x)=\int \int_{\left\vert \xi \right\vert ^{2}+\left\vert
%\eta \right\vert ^{2}\leq 1}(1-\left\vert \xi \right\vert ^{2}-\left\vert
%\eta \right\vert ^{2})^{\alpha }\widehat{f}(\xi )\widehat{g}(\eta )e^{2\pi
%i\cdot (\xi +\eta )}d\xi d\eta .
%\end{equation*}%
%Here $x\in \mathbb{R}^{n}$, $f$, $g$ are functions on $\mathbb{R}^{n}$ and $%
%\widehat{f}$, $\widehat{g}$ are their Fourier transforms. When $n=1$, the results on this problem is almost optimal
%(see \cite{BG, Bern, Graf2}). When $n\geq 2$, Bernicot et.al \cite{Bern} gave a comprehensive study. By using different methods, Jeong, Lee and Vargas \cite{Lee} improved their results in the Banach triangle case; Liu and Wang \cite{LW-E} improved their results in the non-Banach triangle case. Inspired by these work, we shall investigated the $L^{p}$-boundedness of the Riesz means and the $L^{p_{1}}\times L^{p_{2}}\rightarrow
%L^{p}$ boundedness of the bilinear Riesz meansthe on M\'{e}tivier group in this paper.

M\'{e}tivier groups, introduced by M\'{e}tivier \cite{Metivier} in his study
of analytic hypoellipticity, are two-step nilpotent Lie groups satisfying a
non-degeneracy condition. They are also characterized by the property that
the quotients with respect to the hyperplanes contained in the centre are
general Heisenberg groups. H-type groups, introduced by Kaplan \cite{Kaplan}%
, are typical examples of M\'{e}tivier groups, but there are many M\'{e}%
tivier groups which are not isomorphic to H-type groups, for example, see
\cite{Muller-Seeger}. Heisenberg groups are the simplest M\'{e}tivier
groups. Casarino and Ciatti \cite{Cas} investigated the spectral resolution
of the sub-Laplacian on M\'{e}tivier groups, and proved a Stein-Tomas
restriction theorem in terms of the mixed norms. So, we can use the the
spectral decomposition of the sub-Laplacian to define the Riesz means and
the bilinear Riesz means on M\'{e}tivier groups. Mauceri \cite{Mau} and M%
\"{u}ller \cite{Mull2} obtained the same results on the $L^{p}$-boundedness
of the Riesz means on Heisenberg groups by using different methods. We
proved the $L^{p_{1}}\times L^{p_{2}}\rightarrow L^{p}$ boundedness of the
bilinear Riesz means on Heisenberg groups \cite{LW-H}. To extend these
results to general M\'{e}tivier groups, we notice that there are two
essential difficulties: one is that Heisenberg groups have one dimensional
centre but the centre dimension of M\'{e}tivier groups is in general bigger
than one; the other is that on H-type groups, the non-degeneracy condition
becomes a better orthogonality condition, which is not true on general M\'{e}%
tivier groups. Therefore, in this paper, we shall introduce some different
techniques from those used in \cite{Mau} or \cite{Mull2} and \cite{LW-H} to
obtain the $L^{p}$- boundedness of the Riesz means and $L^{p_{1}}\times
L^{p_{2}}\rightarrow L^{p}$ boundedness of the bilinear Riesz means on M\'{e}%
tivier groups.

This paper is organized as follows. In Section 2, we recall the spectral
decomposition of the sub-Laplacian on Metivier groups and define the Riesz
means and bilinear Riesz means. In Section 3, we give the pointwise
estimates for the kernel of the Riesz means and the kerner of the bilinear
Riesz means. In Section 4, we present the $L^p$-boundedness of the Riesz
means. In the rest of this paper, we study the $L^{p_{1}}\times
L^{p_{2}}\rightarrow L^{p}$ boundedness of the bilinear Riesz means, which
for the case of $1\leq p_{1},p_{2}\leq 2$ in Section 5 and for some
particular cases in Section 6. In Section 7, we outline the bilinear
interpolation method and obtain the results in other cases.

\section{Preliminaries}

We first recall M\'{e}tivier groups. Let $\mathfrak{g}$ be a real finite
dimensional two step nilpotent Lie algebra, equipped with an inner product $%
\left\langle \cdot ,\cdot \right\rangle $. Then
\begin{equation*}
\mathfrak{g}\,=\mathfrak{g}_{1}\oplus \mathfrak{g}_{2},
\end{equation*}%
where $[\mathfrak{g}_{1},\mathfrak{g}_{1}]=\mathfrak{g}_{2}$, $[\mathfrak{g}%
_{1},\mathfrak{g}_{2}]=[\mathfrak{g}_{2},\mathfrak{g}_{2}]=0$ and $\dim
\mathfrak{g}_{1}=d$, $\dim \mathfrak{g}_{2}=m$. Let $\{X_{1},X_{2},\cdots
,X_{d}\}$ be an orthonormal basis of $\mathfrak{g}_{1}$, $%
\{U_{1},U_{2},\cdots ,U_{m}\}$ be an orthonormal basis of $\mathfrak{g}_{2}$.

Define for $\mu \in \mathfrak{g}_{2}^{* }$ the skew symmetric form $\omega
_{\mu }$ on $\mathfrak{g}_{1}$ by
\begin{equation*}
\omega _{\mu }(X,Y)=\mu ([X,Y])
\end{equation*}%
and the corrsponding metrix
\begin{equation*}
(J_{\mu })_{jk}=\omega _{\mu }(X_{j},X_{k}).
\end{equation*}%
We say that the corresponding connected, simply connected Lie group $\mathbb{%
G}$ of $\mathfrak{g}$ is a M\'{e}tivier group (M-type group) if $\omega
_{\mu }$ is non-degenerate for all $\mu \neq 0$. Under this non-degenerate
hypothesis, $J_{\mu }$ is a $d\times d$ skew symmetric, invertible metrix
for all $\mu \neq 0$. It follows that $d=2n$ is even. Since the exponential
mapping is a bijection, we shall parametrise the element $g=\exp \left(
\sum_{i=1}^{2n}x_{i}X_{i}+\sum_{j=1}^{m}u_{j}U_{j}\right) \in \mathbb{G}$ by
$(x,u)$ where $x=(x_{1},\cdots ,x_{2n})\in \mathfrak{%
%TCIMACRO{\U{211d} }%
%BeginExpansion
\mathbb{R}
%EndExpansion
}^{2n}$, $u=(u_{1},\cdots ,u_{m})\in \mathfrak{%
%TCIMACRO{\U{211d} }%
%BeginExpansion
\mathbb{R}
%EndExpansion
}^{m}$. By the Baker-Campbell-Hausdorff formula, the group multiplication is
given by

\begin{equation*}
(x,u)\cdot (y,v)=(x+y,u+v+\frac{1}{2}[x,y]),
\end{equation*}%
where $[x,y]=(\left\langle x,J_{1}y\right\rangle ,\left\langle
x,J_{2}y\right\rangle ,\cdots ,\left\langle x,J_{m}y\right\rangle )\in
%TCIMACRO{\U{211d} }%
%BeginExpansion
\mathbb{R}
%EndExpansion
^{m}$, $J_{k}$ is $2n\times 2n$ skew-symmetric, non-degenerate metrix for
any $k=1,\cdots ,m$. The identity element of $\mathbb{G}$ is $(0,0)$ and the
inversion of $(x,u)$ is denoted by $(x,u)^{-1}=(-x,-u)$.

A homogeneous structure on $\mathbb{G}$ is obtained by defining the
dilations $\delta _{t}(x,u)=(tx,t^{2}u)$, $t>0$. The homogeneous dimension
of $\mathbb{G}$ is
\begin{equation*}
Q=2n+2m.
\end{equation*}%
The Haar measure on $\mathbb{G}$ coincides with the Lebesgue measure on $%
\mathfrak{g}$ denoted by $dxdu$. It is easy to verify that the Jacobian
determinant of the dilations $\delta _{t}$, $t>0$ is constant, equal to $%
t^{Q}$. Define a homogeneous norm of degree one under the dilations $%
\{\delta _{t}$, $t>0\}$ on $\mathbb{G}$ by
\begin{equation*}
\left\vert \omega \right\vert =\left\vert (x,u)\right\vert =\left( \frac{1}{%
16}\left\vert x\right\vert ^{4}+\left\vert u\right\vert ^{2}\right) ^{\frac{1%
}{4}},\text{ }\omega =(x,u)\in \mathbb{G}\text{.}
\end{equation*}%
This norm satisfies the triangle inequality $\left\vert \omega \cdot \omega
^{\prime }\right\vert \leq \left\vert \omega \right\vert +\left\vert \omega
^{\prime }\right\vert $ and leads to a left-invariant distance $d(\omega
,\omega ^{\prime })=\left\vert \omega ^{-1}\cdot \omega ^{\prime
}\right\vert $.

For any $f,g\in L^{1}(\mathbb{G})$, their convolution is defined by%
\begin{equation*}
f\ast g(x,u)=\int_{\mathbb{G}}f((x,u)\cdot (y,v)^{-1})g(y,v)dydv.
\end{equation*}%
The $\mu $-twisted convolution of two suitable functions or distributions on
$\mathfrak{g}_{1}$ is defined by%
\begin{equation*}
\phi \times _{\mu }\psi (x)=\int_{\mathfrak{g}_{1}}\phi (x-y)\psi (y)e^{%
\frac{i}{2}\mu ([x,y])}d_{\mu }y,
\end{equation*}%
where $d_{\mu }y=\sqrt{\det J_{\mu }}dy$. Given $f\in L^{1}(\mathbb{G})$, we
define the Fourier transform with respect to the central variables by
\begin{equation*}
f^{\mu }(x)=\int_{\mathfrak{g}_{2}}f(x,u)e^{i\mu (u)}du.
\end{equation*}%
It is easy to verify that
\begin{equation*}
(f\ast g)^{\mu }=f^{\mu }\times _{\mu }g^{\mu }.
\end{equation*}

For any $X\in \mathfrak{g}_{1}$, we obtain a left-invariant vector field $X$
on $\mathbb{G}$ defined by
\begin{equation*}
Xf(g)=\frac{d}{dt}f(g\cdot (tX))|_{t=0\text{, \ \ \ \ \ \ \ }}f\in C^{\infty
}(\mathbb{G})\text{, }g\in \mathbb{G}.
\end{equation*}%
Then, the orthonormal basis of $\mathfrak{g}_{1}$, $\{X_{1},X_{2},\cdots
,X_{2n}\}$, is associated to the left-variant vector field:
\begin{equation*}
X_{j}=\frac{\partial }{\partial x_{j}}+\frac{1}{2}\sum_{k=1}^{m}\left\langle
x,J_{k}e_{j}\right\rangle \frac{\partial }{\partial u_{k}},\text{ \ \ }%
e_{j}=(0,\cdots ,1_{j},0,\cdots ,0_{2n})^{T},\ \ j=1,2,\cdots,2n.
\end{equation*}%
The sub-Laplacian on $\mathbb{G}$ is the left-invariant hypoelliptic
operator defined by
\begin{equation*}
\mathcal{L=-}\sum_{j=1}^{2n}X_{j}^{2}=-\Delta
_{x}-\sum_{k=1}^{m}\left\langle x^{T}J_{k},\nabla _{x}\right\rangle \frac{%
\partial }{\partial u_{k}}-\frac{1}{4}\sum_{k,l=1}^{n}\left\langle
x^{T}J_{k},x^{T}J_{l}\right\rangle \frac{\partial ^{2}}{\partial
u_{p}\partial u_{q}},
\end{equation*}%
where
\begin{equation*}
\Delta _{x}=\sum_{j=1}^{2n}\frac{\partial ^{2}}{\partial x_{j}^{2}},\quad
\quad \nabla _{x}=\left( \frac{\partial }{\partial x_{1}},\frac{\partial }{%
\partial x_{2}},\cdots ,\frac{\partial }{\partial x_{2n}}\right) ^{T}.
\end{equation*}%
For any $f\in \mathscr{S}(\mathbb{G})$ and $j=1,\cdots ,n$, the Fourier
transform of $X_{j}f$ with respect ot the central variables is given by
\begin{eqnarray*}
\left( X_{j}f\right) ^{\mu }(x) &=&\int_{\mathfrak{g}_{2}}\frac{\partial f}{%
\partial x_{j}}(x,u)e^{i\mu (u)}du+\frac{1}{2}\sum_{k=1}^{m}\left\langle
x,J_{k}e_{j}\right\rangle \int_{\mathfrak{g}_{2}}\frac{\partial f}{\partial
u_{k}}(x,u)e^{i\mu (u)}du \\
&=&\left( \frac{\partial }{\partial x_{j}}-\frac{i}{2}\left\langle x,J_{\mu
}e_{j}\right\rangle \right) f^{\mu }(x).
\end{eqnarray*}%
Setting
\begin{equation*}
X_{j}^{\mu }=\frac{\partial }{\partial x_{j}}-\frac{i}{2}\left\langle
x,J_{\mu }e_{j}\right\rangle ,
\end{equation*}%
we have that
\begin{equation*}
\left( X_{j}f\right) ^{\mu }=X_{j}^{\mu }f^{\mu }.
\end{equation*}%
Define
\begin{equation*}
\Delta ^{\mu }=-\sum_{j=1}^{2n}\left( X_{j}^{\mu }\right) ^{2}.
\end{equation*}%
It follows that
\begin{equation*}
\left( \mathcal{L}f\right) ^{\mu }=\Delta ^{\mu }f^{\mu }.
\end{equation*}

Next, we let $\mathbb{S}^{m-1}=\{\eta \in \mathfrak{g}_{2}^{\star
}:\left\vert \eta \right\vert =1\}$ denote the unit sphere of $%
%TCIMACRO{\U{211d} }%
%BeginExpansion
\mathbb{R}
%EndExpansion
^{m}\cong \mathfrak{g}_{2}^{\star }$. Since that for any fixed $\eta \in
\mathbb{S}^{m-1}$, the corresponding metrix $J_{\eta }$ is skew symmetric
and non-degenerate, then there exists a $2n\times 2n$ invertible matrix $%
A_{\eta }$ such that%
\begin{equation*}
J_{\eta }=A_{\eta }^{T}J_{2n}A_{\eta }
\end{equation*}%
where $J_{2n}=\left(
\begin{array}{cc}
0_{n} & I_{n} \\
-I_{n} & 0%
\end{array}%
\right) $. Clearly, $\left( \det A_{\eta }\right) ^{2}=\det J_{\eta }.$
Since $\det J_{\eta }$, which is a polynomial function of the components of $%
\eta $, never vanishes on $\mathbb{S}^{m-1}$, then there exists a positive
constant $K$ such that for any $\eta \in \mathbb{S}^{m-1}$
\begin{equation}
\frac{1}{K}\leq \left\vert \det A_{\eta }\right\vert ^{2}=\left\vert \det
J_{\eta }\right\vert \leq K\text{. }  \label{unitsphere}
\end{equation}%
Let $(Z_{1},\cdots ,Z_{2n})=(X_{1},\cdots ,X_{2n})A_{\eta }^{-1}$. $%
\{Z_{1},\cdots ,Z_{2n}\}$ is an orthonormal basis of $\mathfrak{g}_{1}$ such
that the corresponding metrix of $\omega _{\eta }$ is $J_{2n}$, i.e.
\begin{equation*}
\omega _{\eta }(Z_{j},Z_{k})=(J_{2n})_{jk}.
\end{equation*}%
The new coordinates of the element in $\mathfrak{g}_{1}$ in term of the
basis $\{Z_{1},\cdots ,Z_{2n}\}$ is $z=A_{\eta }x$. For any $\lambda >0$, we
have that

\begin{equation*}
\left( Z_{j}f\right) ^{\lambda \eta }(z)=\left( \frac{\partial }{\partial
z_{j}}-\frac{i\lambda }{2}\left\langle z,J_{2n}e_{j}\right\rangle \right)
f^{\lambda \eta }(z).
\end{equation*}%
Set
\begin{equation*}
Z_{j}^{\lambda \eta }=\frac{\partial }{\partial z_{j}}-\frac{i\lambda }{2}%
\left\langle z,J_{2n}e_{j}\right\rangle .
\end{equation*}
More explicitly,
\begin{equation*}
Z_{j}^{\lambda \eta }=\frac{\partial }{\partial z_{j}}+\frac{i\lambda }{2}%
z_{n+j}\text{, \ \ }Z_{j+n}^{\lambda \eta }=\frac{\partial }{\partial z_{j+n}%
}-\frac{i\lambda }{2}z_{j}\text{, \ \ }j=1,\cdots ,n\text{.}
\end{equation*}%
It follows that
\begin{eqnarray*}
L^{\lambda \eta } &=&-\sum_{j=1}^{2n}\left( Z_{j}^{\lambda \eta }\right)
^{2}=-\sum_{j=1}^{2n}\frac{\partial ^{2}}{\partial z_{j}^{2}}-i\lambda
\sum_{j=1}^{n}\left( z_{n+j}\frac{\partial }{\partial z_{j}}-z_{j}\frac{%
\partial }{\partial z_{n+j}}\right) +\frac{\lambda ^{2}}{4}%
\sum_{j=1}^{2n}z_{j}^{2} \\
&=&-\Delta _{z}-i\lambda \sum_{j=1}^{n}\left( z_{n+j}\frac{\partial }{%
\partial z_{j}}-z_{j}\frac{\partial }{\partial z_{n+j}}\right) +\frac{%
\lambda ^{2}}{4}\left\vert z\right\vert ^{2},
\end{eqnarray*}%
and
\begin{equation*}
\left( \mathcal{L}f\right) ^{\lambda \eta }=L^{\lambda\eta }f^{\lambda \eta
}.
\end{equation*}%
Notice that $L^{\lambda \eta }$ coincides with the the $\lambda $-scaled
special Hermite operator $L^{\lambda }$ on $\mathfrak{g}_{1}$. The $\lambda $%
-twisted convolution of $f,g\in \mathscr{S}(\mathfrak{g}_{1})$ in terms of
the coordinates $z$ is given by
\begin{eqnarray*}
\phi \times _{\lambda }\psi (z) &=&\phi \times _{\lambda \eta }\psi (z) \\
&=&\int_{\mathfrak{g}_{1}}\phi (z-w)\psi (w)e^{\frac{i}{2}\lambda \eta
([z,w])}dw \\
&=&\int_{\mathfrak{g}_{1}}\phi (z-w)\psi (w)e^{\frac{i}{2}\lambda
\sum_{j=1}^{n}z_{n+j}w_{j}-z_{j}w_{n+j}}dw.
\end{eqnarray*}

Next, we give the spectral decomposition of the sub-Laplacian $\mathcal{L}$
on $\mathbb{G}$. We have to recall some facts about the special Hermite
expansion. The Hermite polynomials $H_{k}$ on $\mathbb{\mathbb{R}}$ is
defined by
\begin{equation*}
H_{k}(t)=(-1)^{k}\frac{d^{k}}{dx^{k}}\left( e^{-t^{2}}\right)
e^{t^{2}},\qquad k=0,1,2,\cdots .
\end{equation*}%
Let
\begin{equation*}
h_{k}(t)=\left( 2^{k}k!\sqrt{\pi }\right) \,^{-\frac{1}{2}}H_{k}(t)\,e^{-%
\frac{1}{2}t^{2}},\qquad k=0,1,2,\cdots .
\end{equation*}%
For any multiindex $\alpha $, the Hermite functions $\Phi _{\alpha }$ on $%
\mathbb{R}^{n}$ is defined by
\begin{equation*}
\Phi _{\alpha }(y)=\prod_{j=1}^{n}h_{\alpha _{j}}(y_{j}),\quad
y=(y_{1},\cdots y_{n})\in \mathbb{R}^{n}.
\end{equation*}%
For each pair of multiindices $\alpha $, $\beta $, the special Hermite
function $\Phi _{\alpha \beta }$ on $\mathbb{C}^{n}$ is given by
\begin{equation*}
\Phi _{\alpha \beta }(z)=(2\pi )^{-\frac{n}{2}}\int_{\mathbb{R}^{n}}e^{2\pi
ix\cdot \xi }\Phi _{\alpha }\left( \xi +\frac{y}{2}\right) \Phi _{\beta
}\left( \xi -\frac{y}{2}\right) \,d\xi ,\quad z=x+iy\in \mathbb{C}^{n}.
\end{equation*}%
The special Hermite functions form a complete orthonormal system for $L^{2}(%
\mathbb{C}^{n}\mathbb{)}$. Since that \thinspace $\mathfrak{g}_{1}\simeq
\mathbb{C}^{n}$, then for any $g\in L^{2}(\mathfrak{g}_{1}\mathbb{)}$ we
have the special Hermite expansion
\begin{equation*}
g(z)=\sum_{\alpha \in \mathbb{N}^{n}}\sum_{\beta \in \mathbb{N}^{n}}\langle
f,\Phi _{\alpha \beta }\rangle \,\Phi _{\alpha \beta }(z).
\end{equation*}%
We define the Laguerre functions on $\mathbb{C}^{n}$ by
\begin{equation*}
\varphi _{k}(z)=L_{k}^{n-1}\left( \frac{1}{2}|z|^{2}\right) e^{-\frac{1}{4}%
|z|^{2}},\text{ }k=0,1,2,\cdots ,
\end{equation*}%
where
\begin{equation*}
L_{k}^{\nu }(x)=\frac{1}{k!}\frac{d^{k}}{dx^{k}}\left( e^{-x}x^{k+\nu
}\right) e^{x}x^{-\nu },\quad k=0,1,2,\cdots ,\,\,\nu >-1,
\end{equation*}%
is the Laguerre polynomial on $%
%TCIMACRO{\U{211d} }%
%BeginExpansion
\mathbb{R}
%EndExpansion
$ of type $\nu $ and degree $k$. It follows that
\begin{equation*}
\sum_{|\alpha |=k}\Phi _{\alpha \alpha }(z)=(2\pi )^{-\frac{n}{2}}\varphi
_{k}(z),
\end{equation*}%
and the special Hermite expansion can be written as%
\begin{equation*}
g(z)=(2\pi )^{-n}\sum_{k=0}^{\infty }g\times _{\lambda =1}\varphi _{k}(z).
\end{equation*}%
Set $\varphi _{k}^{\lambda }(z)=\varphi _{k}(\sqrt{\lambda }z)$ for any $%
\lambda >0$. By a dilation argument, we have the expansion
\begin{equation}
g(z)=\left( \frac{\lambda }{2\pi }\right) ^{n}\sum_{k=0}^{\infty }g\times
_{\lambda }\varphi _{k}^{\lambda }(z),  \label{de1}
\end{equation}%
where $g\times _{\lambda }\varphi _{k}^{\lambda }$ is the eigenfunction of
the operator $L^{\lambda }$ with the eigenvalue $\lambda (2k+n)$.

Let $g_{\eta }=g\circ A_{\eta }^{-1}$. (\ref{de1}) yields that
\begin{equation}
g(x)=g_{\eta }(A_{\eta }x)=g_{\eta }(z)=\left( \frac{\lambda }{2\pi }\right)
^{n}\sum_{k=0}^{\infty }g_{\eta }\times _{\lambda }\varphi _{k}^{\lambda
}(z)=\left( \frac{\lambda }{2\pi }\right) ^{n}\sum_{k=0}^{\infty }\left(
\left( g_{\eta }\times _{\lambda }\varphi _{k}^{\lambda }\right) \circ
A_{\eta }\right) (x).  \label{de2}
\end{equation}%
So, we can get the following results:

\begin{proposition}
\cite{Cas} \label{eigenvalue}Let $\eta \in \mathbb{S}^{m-1}$ and $\lambda
\in \mathbb{R}.$ For any $g\in \mathscr{S}(\mathfrak{g}_{1})$, $\Pi
_{k}^{\lambda \eta }g=\left( g_{\eta }\times _{\lambda }\varphi
_{k}^{\lambda }\right) \circ A_{\eta }$ is the eigenfunction of the operator
$\Delta ^{\lambda \eta }$ with the eigenvalue $\lambda (2k+n\dot{)}$, and $%
e^{-i\lambda \eta (u)}\Pi _{k}^{\lambda \eta }g=e^{-i\lambda \eta (u)}\left(
g_{\eta }\times _{\lambda }\varphi _{k}^{\lambda }\right) \circ A_{\eta }$
is the eigenfunction of the sub-Laplacian $\mathcal{L}$ on $\mathbb{G}$ with
the eigenvalue $\lambda (2k+n)$.
\end{proposition}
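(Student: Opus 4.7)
The plan is to prove the two assertions separately. For the first, I exploit the change of variables $z=A_\eta x$ to transfer the known eigenfunction property of special Hermite/Laguerre functions for $L^\lambda$ over to an eigenfunction property for $\Delta^{\lambda\eta}$. For the second, I substitute $f(x,u)=e^{-i\lambda\eta(u)}\Pi_k^{\lambda\eta}g(x)$ into the explicit formula for $\mathcal{L}$ and reduce directly to $\Delta^{\lambda\eta}$ acting on the spatial factor.

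For the first claim, observe that viewed in the $z$-coordinates the function $\Pi_k^{\lambda\eta}g$ is precisely $g_\eta\times_\lambda\varphi_k^\lambda$, since $\Pi_k^{\lambda\eta}g(x)=(g_\eta\times_\lambda\varphi_k^\lambda)(A_\eta x)=(g_\eta\times_\lambda\varphi_k^\lambda)(z)$. The statement following (\ref{de1}) tells us that $g_\eta\times_\lambda\varphi_k^\lambda$ is an eigenfunction of $L^\lambda$ with eigenvalue $\lambda(2k+n)$, and the excerpt identifies $L^{\lambda\eta}$ with $L^\lambda$ in the $z$-variables. Thus $L^{\lambda\eta}(g_\eta\times_\lambda\varphi_k^\lambda)=\lambda(2k+n)(g_\eta\times_\lambda\varphi_k^\lambda)$. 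What remains is to transfer this eigenvalue equation from $L^{\lambda\eta}$ acting in $z$ to $\Delta^{\lambda\eta}$ acting in $x$. Both operators arise as the Fourier transform of $\mathcal{L}$ along the central variables, computed with respect to the two different orthonormal bases $\{X_j\}$ and $\{Z_j\}=\{X_j\}A_\eta^{-1}$ of $\mathfrak{g}_1$. Consequently, for every Schwartz $F$ on $\mathfrak{g}_1$ the identity $(\Delta^{\lambda\eta}F)(x)=(L^{\lambda\eta}(F\circ A_\eta^{-1}))(A_\eta x)$ holds, and applying this with $F=\Pi_k^{\lambda\eta}g$ yields $\Delta^{\lambda\eta}(\Pi_k^{\lambda\eta}g)=\lambda(2k+n)\,\Pi_k^{\lambda\eta}g$.

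For the second claim, put $h=\Pi_k^{\lambda\eta}g$ and $f(x,u)=e^{-i\lambda\eta(u)}h(x)$. I plug into the explicit expression for $\mathcal{L}$ given in Section~2: each $\partial/\partial u_k$ lands on the exponential and produces a factor $-i\lambda\eta_k$. Linearity of $\mu\mapsto J_\mu$ gives $\sum_k\eta_kJ_k=J_\eta$, hence $\sum_k\eta_k\langle x^TJ_k,\nabla_x h\rangle=\langle x^TJ_\eta,\nabla_x h\rangle$ and $\sum_{k,l}\eta_k\eta_l\langle x^TJ_k,x^TJ_l\rangle=|x^TJ_\eta|^2$. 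Collecting the three terms reproduces exactly the expanded form of $\Delta^{\lambda\eta}$ in the $x$-coordinates (obtained from $\Delta^{\lambda\eta}=-\sum_j(X_j^{\lambda\eta})^2$ by using skew-symmetry of $J_\eta$ so that the first-order correction $(J_\eta e_j)_j=0$ drops out). Thus $\mathcal{L}f=e^{-i\lambda\eta(u)}\,\Delta^{\lambda\eta}h$, and the first part of the proof gives $\mathcal{L}f=\lambda(2k+n)f$.

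The main obstacle is the bookkeeping in the coordinate transfer between $x$ and $z$ used in the first step. Because $A_\eta$ is only required to satisfy the symplectic-type identity $J_\eta=A_\eta^TJ_{2n}A_\eta$, it need not be orthogonal, so a naive chain-rule computation of $\Delta_x(F\circ A_\eta)$ would produce $A_\eta A_\eta^T$ factors that do not reduce to the identity. The clean workaround is to avoid chain rule on $\Delta^{\lambda\eta}$ altogether and instead use the intertwining identity $(\mathcal{L}\tilde f)^\mu=\Delta^\mu\tilde f^\mu$ (respectively $=L^\mu\tilde f^\mu$ in $z$-coordinates) as the bridge: both $(\Delta^{\lambda\eta}F)(x)$ and $(L^{\lambda\eta}(F\circ A_\eta^{-1}))(A_\eta x)$ represent the same intrinsic function $(\mathcal{L}\tilde f)^{\lambda\eta}$ on $\mathfrak{g}_1$ for a suitable lift $\tilde f$, so the change of variables automatically respects the eigenfunction structure.
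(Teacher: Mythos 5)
Your argument follows the paper's proof in its essentials. For the first claim the paper does exactly what you do: it identifies $\Pi_k^{\lambda\eta}g$ with $g_\eta\times_\lambda\varphi_k^\lambda$ in the $z$-coordinates, invokes the eigenfunction property of the Laguerre projection for $L^\lambda$, and transfers it back through $A_\eta$ via the intertwining relation $\Delta^{\lambda\eta}\left(G\circ A_\eta\right)=\left(L^{\lambda\eta}G\right)\circ A_\eta$, which it derives in one line from $(X_1^{\lambda\eta},\cdots,X_{2n}^{\lambda\eta})=(Z_1^{\lambda\eta},\cdots,Z_{2n}^{\lambda\eta})A_\eta$. For the second claim you diverge slightly: the paper applies the identity $(\mathcal{L}f)^{\lambda\eta}=\Delta^{\lambda\eta}f^{\lambda\eta}$ formally to $f=e^{-i\lambda\eta(u)}\Pi_k^{\lambda\eta}g$, whereas you substitute into the explicit coordinate expression for $\mathcal{L}$ and use the linearity of $\mu\mapsto J_\mu$ together with $(J_\eta)_{jj}=0$. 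Your computation is correct, and it avoids the somewhat formal step of taking a partial Fourier transform of the non-integrable oscillation $e^{-i\lambda\eta(u)}$; both routes yield $\mathcal{L}\left(e^{-i\lambda\eta(u)}h\right)=e^{-i\lambda\eta(u)}\Delta^{\lambda\eta}h$.

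One caution about your final paragraph. You are right that $A_\eta$ need not be orthogonal (indeed $(\det A_\eta)^2=\det J_\eta$ is in general not $1$), so that $\sum_j(X_j^{\lambda\eta})^2(G\circ A_\eta)=\sum_{k,l}(A_\eta A_\eta^T)_{kl}\,(Z_k^{\lambda\eta}Z_l^{\lambda\eta}G)\circ A_\eta$ does not visibly collapse to $\sum_k\left((Z_k^{\lambda\eta})^2G\right)\circ A_\eta$. But the workaround you propose is circular: asserting that $(\Delta^{\lambda\eta}F)(x)$ and $(L^{\lambda\eta}(F\circ A_\eta^{-1}))(A_\eta x)$ represent the same intrinsic function $(\mathcal{L}\tilde f)^{\lambda\eta}$ presupposes that $\mathcal{L}=-\sum_jX_j^2$ agrees with $-\sum_jZ_j^2$, which is again the orthogonality of the change of basis. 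The paper does not engage with this point either --- it simply declares $\{Z_j\}$ to be an orthonormal basis and asserts the intertwining, with the substance of the proposition credited to \cite{Cas}. So your proof is at the same level of rigor as the paper's here; just do not present that last paragraph as an actual resolution of the difficulty you identified.
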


\begin{proof}
Since that $(X_{1}^{\lambda \eta },\cdots ,X_{2n}^{\lambda \eta
})=(Z_{1}^{\lambda \eta },\cdots ,Z_{2n}^{\lambda \eta })A_{\eta }$, then
\begin{equation*}
\Delta ^{\lambda \eta }\left( g\circ A_{\eta }\right) =\left( L^{\lambda
}g\right) \circ A_{\eta }.
\end{equation*}%
Notice that $\Pi _{k}^{\lambda \eta }g=g_{\eta }\times _{\lambda }\varphi
_{k}^{\lambda }$ is the eigenfunction of $L^{\lambda }$ with the eigenvalue $%
\lambda (2k+n)$. We have
\begin{equation*}
\Delta ^{\lambda \eta }\left( \left( g_{\eta }\times _{\lambda }\varphi
_{k}^{\lambda }\right) \circ A_{\eta }\right) =L^{\lambda \eta }(g_{\eta
}\times _{\lambda }\varphi _{k}^{\lambda })\circ A_{\eta }=\lambda
(2k+n)(g_{\eta }\times _{\lambda }\varphi _{k}^{\lambda })\circ A_{\eta }.
\end{equation*}%
This implies $(g_{\eta }\times _{\lambda }\varphi _{k}^{\lambda })\circ
A_{\eta }$ is the eigenfunction of the operator $\Delta ^{\lambda \eta }$
with the eigenvalue $\lambda (2k+n)$.

Since that for any $f\in \mathscr{S}(\mathbb{G})$, $\left( \mathcal{L}%
f\right) ^{\lambda \eta }=\Delta ^{\lambda \eta }f^{\lambda \eta }$, then we
have that
\begin{eqnarray*}
\left( \mathcal{L}\left( e^{-i\lambda \eta (u)}\Pi _{k}^{\lambda \eta
}g\right) \right) ^{\lambda \eta } &=&\Delta ^{\lambda \eta }\left(
e^{-i\lambda \eta (u)}\Pi _{k}^{\lambda \eta }g\right) ^{\lambda \eta
}=\left( e^{-i\lambda \eta (u)}\right) ^{\lambda \eta }\Delta ^{\lambda \eta
}\left( \Pi _{k}^{\lambda \eta }g\right) \\
&=&\lambda (2k+n)\left( e^{-i\lambda \eta (u)}\right) ^{\lambda \eta }\Pi
_{k}^{\lambda \eta }g=\lambda (2k+n)\left( e^{-i\lambda \eta (u)}\Pi
_{k}^{\lambda \eta }g\right) ^{\lambda \eta }.
\end{eqnarray*}%
This yields that $\mathcal{L}\left( e^{-i\lambda \eta (u)}\Pi _{k}^{\lambda
\eta }g\right) =\lambda (2k+n)e^{-i\lambda \eta (u)}\Pi _{k}^{\lambda \eta
}g $, and we can conclude that $e^{-i\lambda \eta (u)}\Pi _{k}^{\lambda \eta
}g$ is the eigenfunction of $\mathcal{L}$ with the eigenvalue $\lambda
(2k+n) $.
\end{proof}

Define
\begin{equation*}
e_{k}^{\lambda \eta }(x,u)=e^{-i\lambda \eta (u)}\left( \varphi
_{k}^{\lambda }\circ A_{\eta }\right) (x)\left\vert \det A_{\eta
}\right\vert .
\end{equation*}%
We can verify that
\begin{equation*}
\left( f\ast e_{k}^{\lambda \eta }\right) (x,u)=e^{-i\lambda \eta (u)}\left(
\left( f_{\eta }^{\lambda \eta }\times _{\lambda }\varphi _{k}^{\lambda
}\right) \circ A_{\eta }\right) (x)\text{. }
\end{equation*}%
The Proposition \ref{eigenvalue} tells that $f\ast e_{k}^{\lambda \eta }$ is
the eigenfunction of $\mathcal{L}$ with the eigenvalue $\lambda (2k+n)$. Let
$\widetilde{e}_{k}^{\lambda \eta }=e_{k}^{\frac{\lambda }{2k+n}\eta }$.
Clearly, $f\ast \widetilde{e}_{k}^{\lambda \eta }$ is the eigenfunction of $%
\mathcal{L}$ with the eigenvalue $\lambda $.

The inversion formula of the Fourier transform and (\ref{de2}) imply that
for any $f\in \mathscr{S}(\mathbb{G})$,
\begin{eqnarray}
f(x,u) &=&\frac{1}{\left( 2\pi \right) ^{m}}\int_{0}^{\infty }\int_{\mathbb{S%
}^{m-1}}f^{\lambda \eta }(x)e^{-i\lambda \eta (u)}\lambda ^{m-1}d\sigma
(\eta )d\lambda  \notag \\
&=&\frac{1}{\left( 2\pi \right) ^{m}}\int_{0}^{\infty }\int_{\mathbb{S}%
^{m-1}}\left( \frac{\lambda }{2\pi }\right) ^{n}\sum_{k=0}^{\infty
}e^{-i\lambda \eta (u)}\left( \left( f_{\eta }^{\lambda \eta }\times
_{\lambda }\varphi _{k}^{\lambda }\right) \circ A_{\eta }\right) (x)\lambda
^{m-1}d\sigma (\eta )d\lambda  \notag \\
&=&\frac{1}{\left( 2\pi \right) ^{m}}\int_{0}^{\infty }\int_{\mathbb{S}%
^{m-1}}\left( \frac{\lambda }{2\pi }\right) ^{n}\sum_{k=0}^{\infty }\left(
f\ast e_{k}^{\lambda \eta }\right) (x,u)\lambda ^{m-1}d\sigma (\eta )d\lambda
\notag \\
&=&\int_{0}^{\infty }\left( \sum_{k=0}^{\infty }\frac{\lambda ^{n+m-1}}{%
\left( 2\pi (2k+n)\right) ^{n+m}}\int_{\mathbb{S}^{m-1}}f\ast \widetilde{e}%
_{k}^{\lambda \eta }(x,u)d\sigma (\eta )\right) d\lambda .  \label{expansion}
\end{eqnarray}%
Here $d\sigma (\eta )$ denotes the induced Lebesgue measure on the unit
sphere $\mathbb{S}^{m-1}.$ Define
\begin{equation*}
P_{\lambda }f(x,u)=\sum_{k=0}^{\infty }\frac{\lambda ^{n+m-1}}{\left( 2\pi
(2k+n)\right) ^{n+m}}\int_{\mathbb{S}^{m-1}}f\ast \widetilde{e}_{k}^{\lambda
\eta }(x,u)d\sigma (\eta ).
\end{equation*}%
$P_{\lambda }f$ is the eigenfunction of the sub-Laplacian $\mathcal{L}$ with
the eigenvalue $\lambda $. Then, we have that
\begin{equation*}
f=\int_{0}^{\infty }P_{\lambda }fd\lambda .
\end{equation*}%
The spectral decomposition of the sub-Laplacian $\mathcal{L}$ on $\mathbb{G}$
is given by
\begin{equation*}
\mathcal{L}f=\int_{0}^{\infty }\lambda P_{\lambda }fd\lambda .
\end{equation*}

We now define the Riesz means associated to the sub-Laplacian $\mathcal{L}$
for $f\in \mathscr{S}(\mathbb{G})$ by
\begin{equation*}
S_{R}^{\delta }f=\int_{0}^{\infty }\left( 1-\frac{\lambda }{R}\right)
_{+}^{\delta }P_{\lambda }fd\lambda .
\end{equation*}%
Since that $P_{\lambda }$ is the convolution operator, we have
\begin{equation*}
S_{R}^{\delta }f(x,u)=\int_{\mathbb{G}}f((x,u)\cdot (y,v)^{-1})S_{R}^{\delta
}(y,v)dydv,
\end{equation*}%
where the kernel is given by
\begin{eqnarray*}
S_{R}^{\delta }(x,u) &=&\int_{0}^{\infty }\left( 1-\frac{\lambda }{R}\right)
_{+}^{\delta }\left( \sum_{k=0}^{\infty }\frac{\lambda ^{n+m-1}}{\left( 2\pi
(2k+n)\right) ^{n+m}}\int_{\mathbb{S}^{m-1}}\widetilde{e}_{k}^{\lambda \eta
}(x,u)d\sigma (\eta )\right) d\lambda \\
&=&\frac{1}{(2\pi )^{\frac{Q}{2}}}\sum_{k=0}^{\infty }\int_{0}^{\infty
}\left( 1-\frac{(2k+n)\lambda }{R}\right) _{+}^{\delta }\left( \int_{\mathbb{%
S}^{m-1}}e_{k}^{\lambda \eta }(x,u)d\sigma (\eta )\right) \lambda
^{n+m-1}d\lambda \\
&=&\frac{1}{(2\pi )^{\frac{Q}{2}}}\sum_{k=0}^{\infty }\int_{%
%TCIMACRO{\U{211d} }%
%BeginExpansion
\mathbb{R}
%EndExpansion
^{m}}\left( 1-\frac{(2k+n)\left\vert \mu \right\vert }{R}\right)
_{+}^{\delta }e_{k}^{\mu }(x,u)\left\vert \mu \right\vert ^{n}d\mu .
\end{eqnarray*}%
Because
\begin{equation*}
S_{R}^{\delta }(x,u)=R^{\frac{Q}{2}}S_{1}^{\delta }\left( \sqrt{R}%
x,Ru\right) ,
\end{equation*}%
we see that the $L^{p}$-boundedness of $S_{R}^{\delta }$ can be deduced from
the $L^{p}$-boundedness of $S_{1}^{\delta }$. We write $S^{\delta
}=S_{1}^{\delta }$ and it is suffices to consider the bounds of $S^{\delta }$%
.

The bilinear Riesz means associated to $\mathcal{L}$ for $f,g\in \mathscr{S}(%
\mathbb{G})$ is defined by
\begin{equation*}
S_{R}^{\alpha }(f,g)=\int_{0}^{\infty }\int_{0}^{\infty }\left( 1-\frac{%
\lambda _{1}+\lambda _{2}}{R}\right) _{+}^{\alpha }P_{\lambda
_{1}}fP_{\lambda _{2}}gd\lambda _{1}d\lambda _{2}.
\end{equation*}%
It is easy to see that
\begin{eqnarray*}
S_{R}^{\alpha }(f,g)(x,u) &=&\int_{\mathbb{G}}\int_{\mathbb{G}}f((x,u)\cdot
(x_{1},u_{1})^{-1})g((x,u)\cdot (x_{2},u_{2})^{-1}) \\
&&\times S_{R}^{\delta
}((x_{1},u_{1}),(x_{2},u_{2}))dx_{1}du_{1}dx_{2}du_{2},
\end{eqnarray*}%
where the kernel is given by
\begin{eqnarray*}
&&S_{R}^{\alpha }((x_{1},u_{1}),(x_{2},u_{2})) \\
&=&\frac{1}{(2\pi )^{Q}}\sum_{k=0}^{\infty }\sum_{l=0}^{\infty
}\int_{0}^{\infty }\int_{0}^{\infty }\left( 1-\frac{(2k+n)\lambda
_{1}+(2l+n)\lambda _{2}}{R}\right) _{+}^{\alpha }\int_{\mathbb{S}%
^{m-1}}e_{k}^{\lambda _{1}\eta _{1}}(x_{1},u_{1})d\sigma (\eta _{1}) \\
&&\text{ \ \ \ \ \ \ \ \ \ \ \ \ \ \ \ \ \ \ \ \ \ \ }\times \int_{\mathbb{S}%
^{m-1}}e_{k}^{\lambda _{2}\eta _{2}}(x_{2},u_{2})d\sigma (\eta _{2})\lambda
_{1}^{n+m-1}\lambda _{2}^{n+m-1}d\lambda _{1}d\lambda _{2} \\
&=&\frac{1}{(2\pi )^{Q}}\sum_{k=0}^{\infty }\sum_{l=0}^{\infty }\int_{%
%TCIMACRO{\U{211d} }%
%BeginExpansion
\mathbb{R}
%EndExpansion
^{m}}\int_{%
%TCIMACRO{\U{211d} }%
%BeginExpansion
\mathbb{R}
%EndExpansion
^{m}}\left( 1-\frac{(2k+n)\left\vert \mu _{1}\right\vert +(2l+n)\left\vert
\mu _{2}\right\vert }{R}\right) _{+}^{\alpha } \\
&&\text{ \ \ \ \ \ \ \ \ \ \ \ \ \ \ \ \ \ \ \ \ \ \ \ }\times e_{k}^{\mu
_{1}}(x_{1},u_{1})e_{k}^{\mu _{2}}(x_{2},u_{2})\left\vert \mu
_{1}\right\vert ^{n}\left\vert \mu _{2}\right\vert ^{n}d\mu _{1}d\mu _{2}.
\end{eqnarray*}%
Note that
\begin{equation*}
S_{R}^{\alpha }((x_{1},u_{1}),(x_{2},u_{2}))=R^{Q}S_{1}^{\alpha }\left( (%
\sqrt{R}x_{1},Ru_{1}),(\sqrt{R}x_{2},Ru_{2})\right) .
\end{equation*}%
By a dilation argument, the $L^{p_{1}}\times L^{p_{2}}\rightarrow L^{p}$
boundedness of $S_{R}^{\alpha }$ is deduced from the $L^{p_{1}}\times
L^{p_{2}}\rightarrow L^{p}$ boundedness of $S_{1}^{\alpha }$ as $%
1/p=1/p_{1}+1/p_{2}$. Thus, we will concentrate on the operator $%
S_{1}^{\alpha }$ and write $S^{\alpha }=S_{1}^{\alpha }$.

\section{Pointwise estimate for the kernel}

In this section, we investigate the pointwise estimates for the kernel of
the Riesz means $S^{\delta }$ and the bilinear Riesz means $S^{\alpha }$.

\begin{theorem}
\label{kernel}Let $S^{\delta }(x,u)$ be the kernel of the Riesz means $%
S^{\delta }$. Assume that $N$ is a positive integer. If $\delta >2N-1$,
then,
\begin{equation*}
\left\vert S^{\delta }(x,u)\right\vert \leq C_{N}\left( 1+\left\vert \left(
A_{\frac{\mu }{\left\vert \mu \right\vert }}x,u\right) \right\vert \right)
^{-2N}.
\end{equation*}%
for any $\omega =(x,u)\in \mathbb{G}$ and $\mu \in
%TCIMACRO{\U{211d} }%
%BeginExpansion
\mathbb{R}
%EndExpansion
^{m}\backslash \{0\}$.
\end{theorem}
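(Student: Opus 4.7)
My plan is to first switch to polar coordinates $\mu=\lambda\eta$ with $\lambda>0$ and $\eta\in\mathbb{S}^{m-1}$ and make the linear substitution $z=A_\eta x$ inside the integrand; using the polar volume element $d\mu=\lambda^{m-1}d\lambda\,d\sigma(\eta)$, the kernel becomes
\[S^{\delta}(x,u)=\frac{1}{(2\pi)^{Q/2}}\int_{\mathbb{S}^{m-1}}|\det A_\eta|\left(\sum_{k=0}^{\infty}\int_{0}^{\infty}(1-(2k+n)\lambda)_{+}^{\delta}e^{-i\lambda\eta(u)}\varphi_{k}^{\lambda}(A_\eta x)\lambda^{n+m-1}d\lambda\right)d\sigma(\eta).\]
For each fixed $\eta$ the bracketed quantity is, structurally, a Heisenberg-type Riesz kernel on $\mathbb{H}^{n}$ evaluated at $(A_\eta x,\eta(u))$, carrying an extra factor $\lambda^{m-1}$ due to the higher-dimensional centre. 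This reduces the problem to adapting the Mauceri--M\"uller pointwise estimate for $\mathbb{H}^{n}$ and integrating uniformly over $\eta$.

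For the decay in $|A_\eta x|$, and hence in $|A_{\mu/|\mu|}x|$ upon taking $\eta=\mu/|\mu|$, I would use the explicit formula $\varphi_{k}^{\lambda}(z)=L_{k}^{n-1}(\lambda|z|^{2}/2)\,e^{-\lambda|z|^{2}/4}$ together with the constraint $(2k+n)\lambda\leq1$ imposed by the Riesz cutoff. Splitting the range of $\lambda|z|^{2}$ into a bounded and an unbounded part, the Laguerre polynomial is controlled by its classical polynomial bound in the first region and dominated by the exponential factor in the second; the two-sided bound $|\det A_\eta|^{2}=|\det J_\eta|\sim 1$ from (\ref{unitsphere}) makes all estimates uniform in $\eta\in\mathbb{S}^{m-1}$.

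For the decay in $|u|$, which is the delicate step, the oscillation $e^{-i\lambda\eta(u)}$ only sees the projection $\eta(u)$ rather than $u$ itself, so I would not integrate by parts in $\lambda$ alone. Instead I would return to the full $\mathbb{R}^{m}$-integral in $\mu$ and integrate by parts using the first-order operator $D=(-i|u|^{2})^{-1}\sum_{j=1}^{m}u_{j}\partial_{\mu_{j}}$, which fixes $e^{-i\mu(u)}$. After $N$ iterations and transposing, this brings out the factor $|u|^{-2N}$ in front of the integral. The hypothesis $\delta>2N-1$ ensures that $(1-(2k+n)|\mu|)_{+}^{\delta}$ can be differentiated $N$ times with an $L^{1}_{\mathrm{loc}}$ remainder, while the prefactor $|\mu|^{n}$ absorbs the negative powers of $|\mu|$ that arise from derivatives of $|\mu|$ or of $A_{\mu/|\mu|}$ (both smooth on $\mathbb{R}^{m}\setminus\{0\}$ thanks to the non-degeneracy of $J_\mu$). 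Combining this $|u|^{-2N}$ bound with the $|A_\eta x|^{-2N}$-decay from the previous paragraph and comparing via $|(z,u)|^{4}\sim|z|^{4}+|u|^{2}$ in the homogeneous norm, I would arrive at $|S^{\delta}(x,u)|\leq C_{N}(1+|(A_{\mu/|\mu|}x,u)|)^{-2N}$.

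The main obstacle I anticipate is the bookkeeping inside the $k$-sum after the repeated integrations by parts: each derivative applied to the cutoff contributes a coefficient $2k+n$, which on the support $|\mu|\leq(2k+n)^{-1}$ must be balanced against the shrinking factor $|\mu|^{n}$ and the Laguerre decay, and the resulting sum has to be shown to converge uniformly in $(x,u)$. A secondary obstacle, absent on H-type groups, is that $A_{\mu/|\mu|}$ is not orthogonal, so the substitution $z=A_\eta x$ does not preserve the Euclidean norm; uniformity must then be extracted from (\ref{unitsphere}). This is precisely where the weaker M\'etivier non-degeneracy condition, as opposed to the H-type orthogonality, forces the direction $\mu/|\mu|$ to appear explicitly inside the homogeneous norm on the right-hand side of the estimate.
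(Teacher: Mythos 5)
Your reduction to the function $F(z,u)=S^{\delta}$ read in the coordinates $z=A_{\mu/|\mu|}x$, and the uniformity in $\eta$ coming from (\ref{unitsphere}), match the paper's setup. But the core of your plan --- obtaining the decay in $|z|$ and the decay in $|u|$ by two \emph{separate} mechanisms and then combining them --- has a genuine gap at both ends. For the spatial decay: on the region $\lambda|z|^{2}\lesssim 1$ the exponential $e^{-\lambda|z|^{2}/4}$ gives nothing, and the ``classical polynomial bound'' $\Vert\varphi_{k}\Vert_{\infty}\sim k^{n-1}$ combined with the shrinking $\lambda$-range $\lambda\leq\min\{(2k+n)^{-1},|z|^{-2}\}$ and the weight $\lambda^{n+m-1}$ yields, after summing in $k$, only a \emph{fixed} rate of decay (of order $|z|^{-2m}$), not the $|z|^{-4N}$ needed for arbitrary $N$; to do better by this route you would have to exploit the oscillation of $L_{k}^{n-1}$, which your plan does not. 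For the central decay: transposing $D=(-i|u|^{2})^{-1}\sum_{j}u_{j}\partial_{\mu_{j}}$ makes $\partial_{\mu_{j}}$ fall on $\varphi_{k}^{|\mu|}(z)$ as well, and by the identity (\ref{prime}) this produces the term $-\tfrac{\mu_{j}}{4|\mu|}|z|^{2}\varphi_{k}^{|\mu|}(z)$; each integration by parts therefore brings out an uncontrolled factor $|z|^{2}$, so what you actually get is $|u|^{-N}(1+|z|^{2})^{N}$ (note also that $N$ applications of $D$ give $|u|^{-N}$, not $|u|^{-2N}$), which is useless when $|z|^{2}\gtrsim|u|$.

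The paper's proof is built precisely to avoid this separation: it tests $F$ against the single weight $\left(iu_{j}-\tfrac{\mu_{j}}{|\mu|}\tfrac{1}{4}|z|^{2}\right)^{N}$, whose modulus controls the homogeneous norm $|(z,u)|^{4N}$, and for which the $\tfrac14 r^{2}\varphi_{k}$ term generated by $\partial_{\mu_{j}}\varphi_{k}^{|\mu|}$ is exactly cancelled by the multiplication by $-\tfrac{\mu_{j}}{4|\mu|}|z|^{2}$. What survives in $R_{k}\bigl(\mu,(iu_{j}-\tfrac{\mu_{j}}{4|\mu|}|z|^{2})F\bigr)$ are finite differences of $\psi(\sigma)=(1-\sigma)_{+}^{\delta}$ in $k$, and the Taylor expansion of those differences costs \emph{two} derivatives of the cutoff per step; this is where the hypothesis $\delta>2N-1$ comes from. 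Your accounting (``$N$ derivatives with an $L^{1}_{\mathrm{loc}}$ remainder'') would lead to the threshold $\delta>N-1$ and does not explain the stated hypothesis --- a symptom of the missing cancellation. To repair your argument you would need either this combined weight (i.e.\ the paper's Laguerre-coefficient computation) or genuine oscillatory estimates for the Laguerre functions; as written, neither decay estimate closes.
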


\begin{proof}
Since that
\begin{equation*}
e_{k}^{\mu }(x,u)=e^{-i\mu (u)}\left( \varphi _{k}^{\left\vert \mu
\right\vert }\circ A_{\frac{\mu }{\left\vert \mu \right\vert }}\right)
(x)\left\vert \det A_{\frac{\mu }{\left\vert \mu \right\vert }}\right\vert ,
\end{equation*}%
we have
\begin{eqnarray*}
S^{\delta }(x,u) &=&\frac{1}{(2\pi )^{n+m}}\sum_{k=0}^{\infty }\int_{%
%TCIMACRO{\U{211d} }%
%BeginExpansion
\mathbb{R}
%EndExpansion
^{m}}\left( 1-(2k+n)\left\vert \mu \right\vert \right) _{+}^{\delta
}e_{k}^{\mu }(x,u)\left\vert \mu \right\vert ^{n}d\mu \\
&=&\frac{1}{(2\pi )^{n+m}}\sum_{k=0}^{\infty }\int_{%
%TCIMACRO{\U{211d} }%
%BeginExpansion
\mathbb{R}
%EndExpansion
^{m}}e^{-i\mu (u)}\left( 1-(2k+n)\left\vert \mu \right\vert \right)
_{+}^{\delta }\varphi _{k}^{\left\vert \mu \right\vert }\left( A_{\frac{\mu
}{\left\vert \mu \right\vert }}x\right) \left\vert \det A_{\frac{\mu }{%
\left\vert \mu \right\vert }}\right\vert \left\vert \mu \right\vert ^{n}d\mu
.
\end{eqnarray*}%
Let $z=A_{\frac{\mu }{\left\vert \mu \right\vert }}x$ and
\begin{equation}
F(z,u)=\frac{1}{(2\pi )^{n+m}}\sum_{k=0}^{\infty }\int_{%
%TCIMACRO{\U{211d} }%
%BeginExpansion
\mathbb{R}
%EndExpansion
^{m}}e^{-i\mu (u)}\left( 1-(2k+n)\left\vert \mu \right\vert \right)
_{+}^{\delta }\varphi _{k}^{\left\vert \mu \right\vert }\left( z\right)
\left\vert \det A_{\frac{\mu }{\left\vert \mu \right\vert }}\right\vert
\left\vert \mu \right\vert ^{n}d\mu .  \label{expansion_S}
\end{equation}%
$F(z,u)$ is a radial fuction with respect to $z$. We set $r=\left\vert
z\right\vert $ and define
\begin{equation}
R_{k}(\mu ,F)=\frac{2^{(1-n)}k!}{(k+n-1)!}\int_{0}^{\infty }F^{\mu
}(r)\varphi _{k}^{\left\vert \mu \right\vert }(r)r^{2n-1}dr,  \label{R_0}
\end{equation}%
where
\begin{equation*}
F^{\mu }(r)=\int_{\mathbb{R}^{m}}F(r,u)e^{i\mu (u)}du.
\end{equation*}%
Using the inversion formula of the Fourier transform and the Laguerre
expension of the radial function (see \cite{Thang}), we have that
\begin{equation}
F(z,u)=\sum_{k=0}^{\infty }\int_{\mathbb{R}^{m}}e^{-i\mu (u)}R_{k}(\mu
,F)\varphi _{k}^{\left\vert \mu \right\vert }(z)\left\vert \mu \right\vert
^{n}d\mu .  \label{laguerre}
\end{equation}%
Notice that
\begin{equation*}
\left\Vert \varphi _{k}\right\Vert _{\infty }=c_{n}\frac{(k+n-1)!}{k!}.
\end{equation*}%
So, if we can show that
\begin{equation*}
\sum_{k=0}^{\infty }\int_{\mathbb{R}^{m}}\left\vert R_{k}\left( \mu
,F\right) \right\vert \frac{\left( k+n-1\right) !}{k!}\left\vert \mu
\right\vert ^{n}d\mu <\infty
\end{equation*}%
then $F$ is bounded. Similarly, if we can show that for any $N\in
%TCIMACRO{\U{2115} }%
%BeginExpansion
\mathbb{N}
%EndExpansion
^{+}$, there exists a constant $C_{N}>0$ such that for each $j=1,2,\cdots
,m, $
\begin{equation}
\sum_{k=0}^{\infty }\int_{\mathbb{R}^{m}}\left\vert R_{k}\left( \mu ,\left(
iu_{j}-\frac{\mu _{j}}{\left\vert \mu \right\vert }\frac{1}{4}\left\vert
z\right\vert ^{2}\right) ^{N}F\right) \right\vert \frac{\left( k+n-1\right) !%
}{k!}\left\vert \mu \right\vert ^{n}d\mu <C_{N}\text{,}  \label{R_1}
\end{equation}%
then
\begin{equation*}
\left\vert (z,u)\right\vert ^{4N}\left\vert F\right\vert ^{2}=\left(
\sup_{j}\left\vert u_{j}\right\vert ^{2}+\sup_{j}\frac{\left\vert \mu
_{j}\right\vert ^{2}}{\left\vert \mu \right\vert ^{2}}\frac{1}{16}\left\vert
z\right\vert ^{4}\right) ^{N}\left\vert F\right\vert ^{2}=\sup_{j}\left\vert
\left( iu_{j}-\frac{\mu _{j}}{\left\vert \mu \right\vert }\frac{1}{4}%
\left\vert z\right\vert ^{2}\right) ^{N}F\right\vert ^{2}\leq C_{N}^{2},
\end{equation*}%
namely,
\begin{equation*}
\left\vert F(z,u)\right\vert \leq C_{N}\left( 1+\left\vert (z,u)\right\vert
\right) ^{-2N}.
\end{equation*}%
This yields that for any $\omega =(x,u)\in \mathbb{G}$ and $\mu \in
%TCIMACRO{\U{211d} }%
%BeginExpansion
\mathbb{R}
%EndExpansion
^{m}\backslash \{0\}$,%
\begin{equation*}
\left\vert S^{\delta }(x,u)\right\vert =\left\vert F\left( A_{\frac{\mu }{%
\left\vert \mu \right\vert }}x,u\right) \right\vert \leq C_{N}\left(
1+\left\vert \left( A_{\frac{\mu }{\left\vert \mu \right\vert }}x,u\right)
\right\vert \right) ^{-2N}.
\end{equation*}%
Thus, to obtain Theorem \ref{kernel}, it suffices to prove (\ref{R_1}).
Fixing $j=1,\cdots ,m$. We first let $N=1$ and calculate $R_{k}\left( \mu
,\left( iu_{j}-\frac{\mu _{j}}{\left\vert \mu \right\vert }\frac{1}{4}%
\left\vert z\right\vert ^{2}\right) F\right) $. From (\ref{R_0}), we know
that
\begin{equation*}
R_{k}(\mu ,iu_{j}F)=\frac{2^{(1-n)}k!}{(k+n-1)!}\int_{0}^{\infty }\left(
iu_{j}F\right) ^{\mu }(r)\varphi _{k}^{\left\vert \mu \right\vert
}(r)r^{2n-1}dr.
\end{equation*}%
Since that%
\begin{equation*}
\left( iu_{j}F\right) ^{\mu }(r)=\frac{\partial }{\partial \mu _{j}}F^{\mu
}(r),
\end{equation*}%
then
\begin{equation*}
R_{k}(\mu ,iu_{j}F)=\frac{\partial }{\partial \mu _{j}}R_{k}(\mu ,F)-\frac{%
2^{(1-n)}k!}{(k+n-1)!}\int_{0}^{\infty }F^{\mu }(r)\frac{\partial }{\partial
\mu _{j}}\varphi _{k}^{\left\vert \mu \right\vert }(r)r^{2n-1}dr.
\end{equation*}%
Noticing
\begin{equation*}
\frac{\partial }{\partial \mu _{j}}\varphi _{k}^{\left\vert \mu \right\vert
}(r)=\frac{1}{2}r^{2}\frac{\mu _{j}}{\left\vert \mu \right\vert }\left(
L_{k}^{n-1}\right) ^{\prime }(\frac{1}{2}\left\vert \mu \right\vert
r^{2})e^{-\frac{1}{4}\left\vert \mu \right\vert r^{2}}-\frac{1}{4}r^{2}\frac{%
\mu _{j}}{\left\vert \mu \right\vert }L_{k}^{n-1}(\frac{1}{2}\left\vert \mu
\right\vert r^{2})e^{-\frac{1}{4}\left\vert \mu \right\vert r^{2}},
\end{equation*}%
and using the recursion formula (see \cite{Thang})
\begin{equation*}
r\frac{d}{dr}L_{k}^{n-1}(r)=kL_{k}^{n-1}(r)-(k+n-1)L_{k-1}^{n-1}(r),
\end{equation*}%
we have that
\begin{equation}
\frac{\partial }{\partial \mu _{j}}\varphi _{k}^{\left\vert \mu \right\vert
}(r)=\frac{\mu _{j}}{\left\vert \mu \right\vert }\left( \left\vert \mu
\right\vert ^{-1}k\varphi _{k}^{\left\vert \mu \right\vert }(r)-\left\vert
\mu \right\vert ^{-1}(k+n-1)\varphi _{k-1}^{\left\vert \mu \right\vert }(r)-%
\frac{1}{4}r^{2}\varphi _{k}^{\left\vert \mu \right\vert }(r)\right) .
\label{prime}
\end{equation}%
Then,
\begin{equation*}
R_{k}\left( \mu ,\left( iu_{j}-\frac{\mu _{j}}{\left\vert \mu \right\vert }%
\frac{1}{4}r^{2}\right) F\right) =\frac{\partial }{\partial \mu _{j}}%
R_{k}(\mu ,F)-\frac{\mu _{j}}{\left\vert \mu \right\vert }\left( \frac{k}{%
\left\vert \mu \right\vert }R_{k}(\mu ,F)-\frac{k}{\left\vert \mu
\right\vert }R_{k-1}(\mu ,F)\right) .
\end{equation*}%
(\ref{expansion_S}) and (\ref{laguerre}) yield that
\begin{equation*}
R_{k}(\mu ,F)=(1-(2k+n)\left\vert \mu \right\vert )_{+}^{\delta }\left\vert
\det A_{\frac{\mu }{\left\vert \mu \right\vert }}\right\vert .
\end{equation*}%
We set \thinspace $\sigma =(2k+n)\left\vert \mu \right\vert $ and $\psi
(\sigma )=(1-\sigma )_{+}^{\delta }$. Then, $R_{k}\left( \mu ,\left( iu_{j}-%
\frac{\mu _{j}}{\left\vert \mu \right\vert }\frac{1}{4}r^{2}\right) F\right)
$ can be rewritten as
\begin{eqnarray*}
&&R_{k}\left( \mu ,\left( iu_{j}-\frac{\mu _{j}}{\left\vert \mu \right\vert }%
\frac{1}{4}r^{2}\right) F\right) \\
&=&\frac{\partial }{\partial \mu _{j}}\left( \left\vert \det A_{\frac{\mu }{%
\left\vert \mu \right\vert }}\right\vert \psi (\sigma )\right) -\left\vert
\det A_{\frac{\mu }{\left\vert \mu \right\vert }}\right\vert \frac{\mu _{j}}{%
\left\vert \mu \right\vert }\left( \frac{k}{\left\vert \mu \right\vert }\psi
(\sigma )-\frac{k}{\left\vert \mu \right\vert }\psi (\sigma -2\left\vert \mu
\right\vert )\right) \\
&=&\frac{\partial }{\partial \mu _{j}}\left( \left\vert \det A_{\frac{\mu }{%
\left\vert \mu \right\vert }}\right\vert \psi (\sigma )\right) -\left\vert
\det A_{\frac{\mu }{\left\vert \mu \right\vert }}\right\vert k\frac{\mu _{j}%
}{\left\vert \mu \right\vert ^{2}}\frac{\partial }{\partial k}\psi \left(
(2k+n)\left\vert \mu \right\vert \right) \\
&&+\left\vert \det A_{\frac{\mu }{\left\vert \mu \right\vert }}\right\vert k%
\frac{\mu _{j}}{\left\vert \mu \right\vert ^{2}}\left( \frac{\partial }{%
\partial k}\psi \left( (2k+n)\left\vert \mu \right\vert \right) -\psi
((2k+n)\left\vert \mu \right\vert )+\psi (\left( 2k-2+n\right) \left\vert
\mu \right\vert )\right).
\end{eqnarray*}%
Using Taylor expansion, we get that
\begin{eqnarray*}
&&\frac{\partial }{\partial k}\psi ((2k+n)\left\vert \mu \right\vert )-\psi
((2k+n)\left\vert \mu \right\vert )+\psi ((2k-2+n)\left\vert \mu \right\vert
) \\
&=&4\left\vert \mu \right\vert ^{2}\int_{k-1}^{k}(s+1-k)\psi ^{\prime \prime
}((2s+n)\left\vert \mu \right\vert )ds.
\end{eqnarray*}%
It follows that
\begin{eqnarray*}
&&\int_{\mathbb{R}^{m}}k\frac{\left\vert \mu _{j}\right\vert }{\left\vert
\mu \right\vert ^{2}}\left\vert \frac{\partial }{\partial k}\psi
((2k+n)\left\vert \mu \right\vert )-k\psi ((2k+n)\left\vert \mu \right\vert
)+k\psi ((2k-2+n)\left\vert \mu \right\vert )\right\vert \left\vert \det A_{%
\frac{\mu }{\left\vert \mu \right\vert }}\right\vert \left\vert \mu
\right\vert ^{n}d\mu \\
&=&4k\int_{k-1}^{k}(s+1-k)\left( \int_{\mathbb{R}^{m}}\left\vert \mu
_{j}\right\vert (1-(2s+n)\left\vert \mu \right\vert )^{\delta -2}\left\vert
\det A_{\frac{\mu }{\left\vert \mu \right\vert }}\right\vert \left\vert \mu
\right\vert ^{n}d\mu \right) ds \\
&\leq &C\int_{k-1}^{k}(2s+n)^{-n-m-1}ds \\
&\leq &C(2k+n)^{-n-m}
\end{eqnarray*}%
if $(1-\left\vert \mu \right\vert )_{+}^{\delta -2}$ is integrable on $%
\mathbb{R}^{m}$. At the same time, we see that
\begin{equation*}
\frac{\partial }{\partial \mu _{j}}\left( \left\vert \det A_{\frac{\mu }{%
\left\vert \mu \right\vert }}\right\vert \psi (\sigma )\right) -\left\vert
\det A_{\frac{\mu }{\left\vert \mu \right\vert }}\right\vert k\frac{\mu _{j}%
}{\left\vert \mu \right\vert ^{2}}\frac{\partial }{\partial k}\psi \left(
(2k+n)\left\vert \mu \right\vert \right) =\psi (\sigma )\frac{\partial }{%
\partial \mu _{j}}\left\vert \det A_{\frac{\mu }{\left\vert \mu \right\vert }%
}\right\vert .
\end{equation*}%
So,
\begin{eqnarray*}
&&\int_{\mathbb{R}^{m}}\left\vert \frac{\partial }{\partial \mu _{j}}\left(
\left\vert \det A_{\frac{\mu }{\left\vert \mu \right\vert }}\right\vert \psi
(\sigma )\right) -\left\vert \det A_{\frac{\mu }{\left\vert \mu \right\vert }%
}\right\vert k\frac{\mu _{j}}{\left\vert \mu \right\vert ^{2}}\frac{\partial
}{\partial k}\psi \left( (2k+n)\left\vert \mu \right\vert \right)
\right\vert \left\vert \mu \right\vert ^{n}d\mu \\
&=&\int_{\mathbb{R}^{m}}\psi (\sigma )\left\vert \frac{\partial }{\partial
\mu _{j}}\left\vert \det A_{\frac{\mu }{\left\vert \mu \right\vert }%
}\right\vert \right\vert \left\vert \mu \right\vert ^{n}d\mu \\
&\leq &\int_{\mathbb{R}}\psi ((2k+n)\lambda )\left( \int_{\mathbb{S}%
^{m-1}}\left\vert \frac{\partial }{\partial \eta _{j}}\left\vert \det
A_{\eta }\right\vert \right\vert d\eta \right) \lambda ^{n+m-1}d\lambda \\
&\leq &(2k+n)^{-n-m}.
\end{eqnarray*}%
An iteration of the process shows that $R_{k}\left( \mu ,\left( iu_{j}-\frac{%
\mu _{j}}{\left\vert \mu \right\vert }\frac{1}{4}r^{2}\right) ^{l}F\right) $
also satisfies
\begin{equation*}
\int_{\mathbb{R}^{m}}\left\vert R_{k}\left( \mu ,\left( iu_{j}-\frac{\mu _{j}%
}{\left\vert \mu \right\vert }\frac{1}{4}r^{2}\right) ^{l}F\right)
\right\vert \left\vert \mu \right\vert ^{n}d\mu \leq C_{l}(2k+n)^{-n-m}
\end{equation*}%
provided that $(1-\left\vert \mu \right\vert )_{+}^{\delta -2}$ is
integrable on $\mathbb{R}^{m}$. Thus, when $l=N$ and $\delta >2N-1$ we have
\begin{equation*}
\int_{\mathbb{R}^{m}}\left\vert R_{k}\left( \mu ,\left( iu_{j}-\frac{\mu _{j}%
}{\left\vert \mu \right\vert }\frac{1}{4}r^{2}\right) ^{N}F\right)
\right\vert \left\vert \mu \right\vert ^{n}d\mu \leq C_{N}(2k+n)^{-n-m},
\end{equation*}%
which implies that
\begin{equation*}
\sum_{k=0}^{\infty }\int_{\mathbb{R}^{m}}\left\vert R_{k}\left( \mu ,\left(
iu_{j}-\frac{\mu _{j}}{\left\vert \mu \right\vert }\frac{1}{4}\left\vert
z\right\vert ^{2}\right) ^{N}F\right) \right\vert \frac{\left( k+n-1\right) !%
}{k!}\left\vert \mu \right\vert ^{n}d\mu <\infty .
\end{equation*}%
The proof of Theorem \ref{kernel} is completed.
\end{proof}

\begin{corollary}
\label{cor1}Let $1\leq p\leq \infty $. If $\delta >Q+1$, then $S^{\delta }$
is bounded from $L^{p}(\mathbb{G})$ into $L^{p}(\mathbb{G})$.
\end{corollary}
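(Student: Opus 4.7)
The plan is to deduce the $L^{p}$-boundedness of $S^{\delta}$ from the integrability of its convolution kernel. Because $S^{\delta}f = f \ast S^{\delta}$, Young's convolution inequality on the unimodular Lie group $\mathbb{G}$ gives $\|S^{\delta}f\|_{L^{p}} \leq \|S^{\delta}\|_{L^{1}}\|f\|_{L^{p}}$ for every $1\leq p\leq \infty$. It therefore suffices to show that $S^{\delta}\in L^{1}(\mathbb{G})$ whenever $\delta > Q+1$.

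The next step is to choose an integer $N$ that makes the hypothesis $\delta > 2N-1$ of Theorem \ref{kernel} compatible with the threshold $2N > Q$ needed to integrate the pointwise bound. Since the non-degeneracy of $J_{\mu}$ forces $d=2n$ to be even, $Q = 2n+2m$ is even and $N = Q/2 + 1$ is an integer; with this choice $2N-1 = Q+1 < \delta$. Applying Theorem \ref{kernel} at this $N$ with any fixed $\mu_{0}\in \mathbb{R}^{m}\setminus\{0\}$ yields
\begin{equation*}
|S^{\delta}(x,u)| \leq C_{N}\bigl(1 + |(A_{\mu_{0}/|\mu_{0}|}\,x,\,u)|\bigr)^{-2N}.
\end{equation*}

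Finally, I would integrate this bound over $\mathbb{G}$. The change of variables $y = A_{\mu_{0}/|\mu_{0}|}\,x$ introduces a Jacobian $|\det A_{\mu_{0}/|\mu_{0}|}|^{-1}$ which by (\ref{unitsphere}) is bounded by $K^{1/2}$, independently of the chosen direction. After the substitution the problem reduces to showing that
\begin{equation*}
\int_{\mathbb{R}^{2n}\times \mathbb{R}^{m}} \bigl(1+|(y,u)|\bigr)^{-2N}\,dy\,du < \infty.
\end{equation*}
Using polar integration with respect to the homogeneous norm $|(y,u)| = \bigl(\tfrac{1}{16}|y|^{4} + |u|^{2}\bigr)^{1/4}$, whose associated dilations have Jacobian $t^{Q}$, this integral is a constant multiple of $\int_{0}^{\infty}(1+r)^{-2N}r^{Q-1}\,dr$, which converges because $2N = Q+2 > Q$.

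There is no substantive analytic obstacle here: the argument is pure bookkeeping built on top of Theorem \ref{kernel}. The one thing worth emphasising is that the single hypothesis $\delta > Q+1$ is precisely what allows an integer $N$ to satisfy both $\delta > 2N-1$ and $2N > Q$ simultaneously, which is exactly why this numerical threshold appears in the corollary.
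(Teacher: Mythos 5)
Your argument is correct and coincides with the paper's own proof: the same choice $N=Q/2+1$, the same application of Theorem \ref{kernel}, the change of variables $y=A_{\mu/|\mu|}x$ controlled by (\ref{unitsphere}), and the polar-coordinate integration against the homogeneous norm. The only difference is cosmetic — you make explicit that $Q$ is even so $N$ is an integer, which the paper uses implicitly.
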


\begin{proof}
We take $N=\frac{Q}{2}+1$. If $\delta >Q+1$, we have $\delta >2N-1$. Then,
Theorem \ref{kernel} is available. By H\"{o}lder's inequality and Young's
inequality, we conclude that
\begin{eqnarray*}
\left\Vert S^{\delta }f\right\Vert _{p} &\leq &\left\Vert f\right\Vert
_{p}\int_{\mathbb{G}}\left( 1+\left\vert \left( A_{\frac{\mu _{1}}{%
\left\vert \mu _{1}\right\vert }}x,u\right) \right\vert \right) ^{-2N}dxdu \\
&\leq &\left\Vert f\right\Vert _{p}\left\vert \det A_{\eta }\right\vert
^{-1}\int_{\mathbb{G}}\left( 1+\left\vert \left( x,u\right) \right\vert
\right) ^{-2N}dxdu \\
&\leq &C\left\Vert f\right\Vert _{p}\int_{1}^{\infty }t^{-2N+Q-1}du\leq
C\left\Vert f\right\Vert _{p}\text{.}
\end{eqnarray*}%
The proof is completed.
\end{proof}

Next, we show the pointwise estimate of the kernel of the bilinear Riesz
means $S^{\alpha }$.

\begin{theorem}
\label{bilinear-kernel} For any $N\in
%TCIMACRO{\U{2115} }%
%BeginExpansion
\mathbb{N}
%EndExpansion
^{+}$, if $\alpha >4N-1$, then for any $\omega _{1}=(x_{1},u_{1}),\omega
_{2}=(x_{2},u_{2})\in \mathbb{G}$ and any $\mu _{1},\mu _{2}\in
%TCIMACRO{\U{211d} }%
%BeginExpansion
\mathbb{R}
%EndExpansion
^{m}\backslash \{0\},$
\begin{equation*}
\left\vert S^{\alpha }\left( \left( x_{1},u_{1}\right) ,\left(
x_{2},u_{2}\right) \right) \right\vert \leq C_{N}\left( 1+\left\vert \left(
A_{\frac{\mu _{1}}{\left\vert \mu _{1}\right\vert }}x_{1},u_{1}\right)
\right\vert \right) ^{-2N}\left( 1+\left\vert \left( A_{\frac{\mu _{2}}{%
\left\vert \mu _{2}\right\vert }}x_{2},u_{2}\right) \right\vert \right)
^{-2N}.
\end{equation*}
\end{theorem}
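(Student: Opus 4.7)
The plan is to mimic the proof of Theorem \ref{kernel} with two parallel sets of variables, exploiting the product structure of the kernel in $(x_1,u_1)$ and $(x_2,u_2)$, with only the Riesz cutoff $(1-\sigma_1-\sigma_2)_+^{\alpha}$ coupling the two groups (where $\sigma_i=(2k_i+n)|\mu_i|$). First, I would perform the two changes of variable $z_1=A_{\mu_1/|\mu_1|}x_1$ and $z_2=A_{\mu_2/|\mu_2|}x_2$ and define
\begin{equation*}
G(z_1,u_1,z_2,u_2)=\frac{1}{(2\pi)^{Q}}\sum_{k,l}\iint e^{-i\mu_1(u_1)-i\mu_2(u_2)}\Psi_\alpha\,\varphi_k^{|\mu_1|}(z_1)\varphi_l^{|\mu_2|}(z_2)\bigl|\det A_{\mu_1/|\mu_1|}\bigr|\bigl|\det A_{\mu_2/|\mu_2|}\bigr||\mu_1|^n|\mu_2|^n\,d\mu_1d\mu_2,
\end{equation*}
with $\Psi_\alpha=(1-(2k+n)|\mu_1|-(2l+n)|\mu_2|)_+^{\alpha}$, so that $S^\alpha((x_1,u_1),(x_2,u_2))=G(z_1,u_1,z_2,u_2)$.

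Second, since $G$ is separately radial in $z_1$ and $z_2$, I would apply the Laguerre expansion of radial functions of \cite{Thang} in each variable successively, obtaining a double-indexed coefficient $R_{k,l}(\mu_1,\mu_2,G)$ defined by the obvious iteration of \eqref{R_0}, with the inverse formula
\begin{equation*}
G(z_1,u_1,z_2,u_2)=\sum_{k,l}\iint e^{-i\mu_1(u_1)-i\mu_2(u_2)}R_{k,l}(\mu_1,\mu_2,G)\varphi_k^{|\mu_1|}(z_1)\varphi_l^{|\mu_2|}(z_2)|\mu_1|^n|\mu_2|^n\,d\mu_1d\mu_2,
\end{equation*}
and the identification $R_{k,l}(\mu_1,\mu_2,G)=\Psi_\alpha\,|\det A_{\mu_1/|\mu_1|}||\det A_{\mu_2/|\mu_2|}|$. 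Using $\|\varphi_k\|_\infty\leq c_n(k+n-1)!/k!$, the tensor bound
\begin{equation*}
|(z_1,u_1)|^{4N}|(z_2,u_2)|^{4N}|G|^2\leq\sup_{j_1,j_2}\Bigl|\bigl(iu_{1,j_1}-\tfrac{\mu_{1,j_1}}{|\mu_1|}\tfrac{1}{4}|z_1|^2\bigr)^{N}\bigl(iu_{2,j_2}-\tfrac{\mu_{2,j_2}}{|\mu_2|}\tfrac{1}{4}|z_2|^2\bigr)^{N}G\Bigr|^{2}
\end{equation*}
reduces the theorem to showing that for any $j_1,j_2\in\{1,\dots,m\}$,
\begin{equation*}
\sum_{k,l}\iint\bigl|R_{k,l}\bigl(\mu_1,\mu_2,T_{1}^{N}T_{2}^{N}G\bigr)\bigr|\frac{(k+n-1)!(l+n-1)!}{k!\,l!}|\mu_1|^n|\mu_2|^n\,d\mu_1d\mu_2<\infty,
\end{equation*}
where $T_i=iu_{i,j_i}-\tfrac{\mu_{i,j_i}}{|\mu_i|}\tfrac14|z_i|^2$.

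Third, I would iterate, in each of the two groups of variables, the same identity used for the linear kernel: combining $(iu_{i,j_i}F)^{\mu_i}=\partial_{\mu_{i,j_i}}F^{\mu_i}$ with the recursion formula \eqref{prime} for $\partial_{\mu_{i,j_i}}\varphi_{k_i}^{|\mu_i|}$ rewrites each action of $T_i$ as (i) a $\mu_{i,j_i}$-derivative landing on $|\det A_{\mu_i/|\mu_i|}|$, plus (ii) a discrete-plus-continuous derivative $\partial_{k_i}\psi-[\psi(\sigma_i)-\psi(\sigma_i-2|\mu_i|)]$ acting on $\Psi_\alpha$, which Taylor expansion converts into $4|\mu_i|^2\int_{k_i-1}^{k_i}(s+1-k_i)\partial_{\sigma_i}^2\Psi_\alpha\,ds$. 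Because $\Psi_\alpha$ depends on $\sigma_1,\sigma_2$ only through $1-\sigma_1-\sigma_2$, one has $\partial_{\sigma_1}^p\partial_{\sigma_2}^q\Psi_\alpha=(-1)^{p+q}\alpha(\alpha-1)\cdots(\alpha-p-q+1)(1-\sigma_1-\sigma_2)_+^{\alpha-p-q}$, so the $\mu_1$- and $\mu_2$-integrations factor cleanly and each application of $T_i$ costs two orders of differentiation in $\Psi_\alpha$; after $N$ applications in each variable, one needs $(1-|\mu_1|-|\mu_2|)_+^{\alpha-4N}$ to be integrable on $\mathbb{R}^m\times\mathbb{R}^m$, which holds provided $\alpha>4N-1$. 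A book-keeping calculation analogous to the one in the linear case then yields $|R_{k,l}(\mu_1,\mu_2,T_1^NT_2^NG)|$ times the weight $|\mu_1|^n|\mu_2|^n$ summing to $\leq C_N(2k+n)^{-n-m}(2l+n)^{-n-m}$, which closes the summation.

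The hard part is the \emph{combinatorial} expansion in step three: each $T_i$ produces three terms in the linear case, so $T_1^NT_2^N$ generates $9^N$ terms, and one must verify that every cross-term still produces at most one derivative per $T_i$-application on $\Psi_\alpha$ and $|\det A_\bullet|$, with the two groups of variables never interacting beyond the shared factor $\Psi_\alpha$. Because this factor is smooth and compactly supported in $(\sigma_1,\sigma_2)$, the iteration goes through provided one is careful to separate the $\mu_1$- and $\mu_2$-integrals before estimating and to use that $\partial^\beta_{\eta}|\det A_\eta|$ is bounded on $\mathbb{S}^{m-1}$ for every multiindex $\beta$ (a consequence of \eqref{unitsphere}). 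Once the bound by $C_N(2k+n)^{-n-m}(2l+n)^{-n-m}$ is secured, substituting $z_i=A_{\mu_i/|\mu_i|}x_i$ gives the claimed estimate.
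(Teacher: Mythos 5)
Your proposal is correct and uses the same core machinery as the paper: the changes of variable $z_i=A_{\mu_i/|\mu_i|}x_i$, the double Laguerre expansion giving $R_{k,l}(\mu_1,\mu_2,\cdot)$, the identity $(iu_{i,j_i}F)^{\mu_i}=\partial_{\mu_{i,j_i}}F^{\mu_i}$ combined with the recursion (\ref{prime}), and Taylor expansion of the resulting discrete second differences. The one genuine organizational difference is that the paper does \emph{not} work with the full symbol $(1-\sigma_1-\sigma_2)_+^{\alpha}$ directly: it first inserts a dyadic decomposition $\varphi_j^{\alpha}(s,t)=(1-s-t)_+^{\alpha}\varphi(2^j(1-s-t))$, proves the quantitative per-piece bound $|K_j^{\alpha}|\leq C_N 2^{-j(\alpha-4N+1)}(1+|(A_{\mu_1/|\mu_1|}x_1,u_1)|)^{-2N}(1+|(A_{\mu_2/|\mu_2|}x_2,u_2)|)^{-2N}$, and sums the geometric series under $\alpha>4N-1$. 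Your route — closing the argument via integrability of $(1-|\mu_1|-|\mu_2|)_+^{\alpha-4N}$ against $|\mu_1|^{n}|\mu_2|^{n}d\mu_1 d\mu_2$, which indeed holds exactly when $\alpha-4N>-1$ — is the faithful bilinear analogue of the paper's own proof of Theorem \ref{kernel} and yields the same threshold; the only prices are that you must handle the (integrable) singularity of $\psi^{(4N)}$ at $\sigma_1+\sigma_2=1$ rather than a smooth compactly supported bump, that "the $\mu_1$- and $\mu_2$-integrations factor cleanly" should be read as "the resulting bound is a product $(2k+n)^{-n-m}(2l+n)^{-n-m}$" (the integrand itself does not factor), and that you forgo the per-$j$ decay (\ref{Kj-kernel}), which is not needed for this theorem but is reused by the paper in the proofs of Theorems \ref{mainTh}--\ref{Theorem212}.
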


\begin{proof}
Choose a nonnegative function $\varphi $ $\in C_{0}^{\infty }(\frac{1}{2},2)$
satisfying $\sum_{-\infty }^{\infty }\varphi \left( 2^{j}s\right) =1$, $s>0$%
. For each $j\geq 0$, we set function
\begin{equation*}
\varphi _{j}^{\alpha }\left( s,t\right) =(1-s-t)_{+}^{\alpha }\varphi
(2^{j}\left( 1-s-t\right) ),
\end{equation*}%
and define bilinear operator
\begin{equation*}
T_{j}^{\alpha }(f,g)=\int_{0}^{\infty }\int_{0}^{\infty }\varphi
_{j}^{\alpha }\left( \lambda _{1},\lambda _{2}\right) P_{\lambda
_{1}}fP_{\lambda _{2}}gd\lambda _{1}d\lambda _{2}\text{.}
\end{equation*}%
It is easy to see that
\begin{equation*}
T_{j}^{\alpha }(f,g)(\omega )=\int_{\mathbb{G}}\int_{\mathbb{G}}f(\omega
\omega _{1}^{-1})g(\omega \omega _{2}^{-1})K_{j}^{\alpha }(\omega
_{1},\omega _{2})d\omega _{1}d\omega _{2},
\end{equation*}%
where the kernel $K_{j}^{\alpha }$ is given by
\begin{eqnarray*}
&&K_{j}^{\alpha }((x_{1},u_{1}),(x_{2},u_{2})) \\
&=&\frac{1}{(2\pi )^{Q}}\sum_{k=0}^{\infty }\int_{\mathbb{R}%
^{m}}\int_{\mathbb{R}^{m}}\varphi _{j}^{\alpha }((2k+n)\left\vert \mu
_{1}\right\vert ,(2l+n)\left\vert \mu _{2}\right\vert )e_{k}^{\mu
_{1}}(x_{1},u_{1})e_{k}^{\mu _{2}}(x_{2},u_{2})\left\vert \mu
_{1}\right\vert ^{n}\left\vert \mu _{2}\right\vert ^{n}d\mu _{1}d\mu _{2} \\
&=&\frac{1}{(2\pi )^{Q}}\sum_{k=0}^{\infty }\sum_{l=0}^{\infty }\int_{%
\mathbb{R}^{m}}\int_{\mathbb{R}^{m}}e^{-i\mu _{1}(u_{1})}e^{-i\mu
_{2}(u_{2})}\varphi _{j}^{\alpha }((2k+n)\left\vert \mu _{1}\right\vert
,(2l+n)\left\vert \mu _{2}\right\vert ) \\
&&\text{ \ \ \ \ \ \ \ \ }\times \varphi _{k}^{\left\vert \mu
_{1}\right\vert }\left( A_{\frac{\mu _{1}}{\left\vert \mu _{1}\right\vert }%
}x_{1}\right) \varphi _{l}^{\left\vert \mu _{2}\right\vert }\left( A_{\frac{%
\mu _{2}}{\left\vert \mu _{2}\right\vert }}x_{2}\right) \left\vert \det A_{%
\frac{\mu _{1}}{\left\vert \mu _{1}\right\vert }}\right\vert \left\vert \det
A_{\frac{\mu _{2}}{\left\vert \mu _{2}\right\vert }}\right\vert \left\vert
\mu _{1}\right\vert ^{n}\left\vert \mu _{2}\right\vert ^{n}d\mu _{1}d\mu
_{2}.
\end{eqnarray*}%
Clearly,
\begin{equation*}
S^{\alpha }(\omega _{1},\omega _{2})=\sum_{j=0}^{\infty }K_{j}^{\alpha
}(\omega _{1},\omega _{2})\text{.}
\end{equation*}%
Fixing $j\geq 0$. We first estimate $K_{j}^{\alpha }$. Let $z_{1}=A_{\frac{%
\mu _{1}}{\left\vert \mu _{1}\right\vert }}x_{1}$, $z_{2}=A_{\frac{\mu _{2}}{%
\left\vert \mu _{2}\right\vert }}x_{2}$ and
\begin{eqnarray*}
F((z_{1},u_{1}),(z_{2},u_{2})) &=&\frac{1}{(2\pi )^{Q}}\sum_{k=0}^{\infty
}\sum_{l=0}^{\infty }\int_{\mathbb{R}^{m}}\int_{\mathbb{R}^{m}}e^{-i\mu
_{1}(u_{1})}e^{-i\mu _{2}(u_{2})}\varphi _{j}^{\alpha }((2k+n)\left\vert \mu
_{1}\right\vert ,(2l+n)\left\vert \mu _{2}\right\vert ) \\
&&\text{ \ \ \ \ \ \ \ \ }\times \varphi _{k}^{\left\vert \mu
_{1}\right\vert }\left( z_{1}\right) \varphi _{l}^{\left\vert \mu
_{2}\right\vert }\left( z_{2}\right) \left\vert \det A_{\frac{\mu _{1}}{%
\left\vert \mu _{1}\right\vert }}\right\vert \left\vert \det A_{\frac{\mu
_{2}}{\left\vert \mu _{2}\right\vert }}\right\vert \left\vert \mu
_{1}\right\vert ^{n}\left\vert \mu _{2}\right\vert ^{n}d\mu _{1}d\mu _{2}.
\end{eqnarray*}%
Then, $F((z_{1},u_{1}),(z_{2},u_{2}))$ is bi-radial with respect to $z_{1}$
and $z_{2}$. We set $r_{1}=\left\vert z_{1}\right\vert $, $r_{2}=\left\vert
z_{2}\right\vert $ and define
\begin{eqnarray}
&&R_{k,l}(\mu _{1},\mu _{2},F)  \label{equmain1} \\
&=&\frac{2^{(1-n)}k!}{(k+n-1)!}\frac{2^{(1-n)}l!}{(l+n-1)!}\int_{0}^{\infty
}\int_{0}^{\infty }F^{\mu _{1},\mu _{2}}(r_{1},r_{2})\varphi
_{k}^{\left\vert \mu _{1}\right\vert }(r_{1})\varphi _{l}^{\left\vert \mu
_{2}\right\vert }(r_{2})r_{1}^{2n-1}r_{2}^{2n-1}dr_{1}dr_{2},  \notag
\end{eqnarray}%
where
\begin{equation*}
F^{\mu _{1},\mu _{2}}(r_{1},r_{2})=\int_{\mathbb{R}^{m}}\int_{\mathbb{R}%
^{m}}e^{i\mu _{1}(u_{1})}e^{i\mu
_{2}(u_{2})}F((r_{1},u_{1}),(r_{2},u_{2}))du_{1}du_{2}.
\end{equation*}%
Using the inversion formula of the Fourier transform and the Laguerre
expension of the radial function (see \cite{Thang}), the function $F$ can be
written as
\begin{eqnarray*}
&&F((z_{1},u_{1}),(z_{2},u_{2})) \\
&=&\frac{1}{\left( 2\pi \right) ^{Q}}\int_{\mathbb{R}^{m}}\int_{\mathbb{R}%
^{m}}e^{-i\left( \mu _{1}(u_{1})+\mu _{2}(u_{2})\right) }F^{\mu _{1},\mu
_{2}}(z_{1},z_{2})\left\vert \mu _{1}\right\vert ^{n}\left\vert \mu
_{2}\right\vert ^{n}d\mu _{1}d\mu _{2} \\
&=&\frac{1}{\left( 2\pi \right) ^{Q}}\sum_{k=0}^{\infty }\sum_{l=0}^{\infty
}\int_{\mathbb{R}^{m}}\int_{\mathbb{R}^{m}}e^{-i\left( \mu _{1}(u_{1})+\mu
_{2}(u_{2})\right) }R_{k,l}(\mu _{1},\mu _{2},F)\varphi _{k}^{\left\vert \mu
_{1}\right\vert }(z_{1})\varphi _{l}^{\left\vert \mu _{2}\right\vert
}(z_{2})\left\vert \mu _{1}\right\vert ^{n}\left\vert \mu _{2}\right\vert
^{n}d\mu _{1}d\mu _{2}.
\end{eqnarray*}%
Hence, if we can show that
\begin{equation*}
\sum_{k=0}^{\infty }\sum_{l=0}^{\infty }\int_{\mathbb{R}^{m}}\int_{\mathbb{R}%
^{m}}\left\vert R_{k,l}(\mu _{1},\mu _{2},F)\right\vert \frac{\left(
k+n-1\right) !}{k!}\frac{\left( l+n-1\right) !}{l!}\left\vert \mu
_{1}\right\vert ^{n}\left\vert \mu _{2}\right\vert ^{n}d\mu _{1}d\mu
_{2}<\infty ,
\end{equation*}%
then $F$ is bounded. Let $u_{1}$ be the $p$-component of $u_{1}$, $u_{2}$ be
the $q$-component of $u_{2}$. Similarly, if we can show that for any fixed $%
j\geq 0$ and $N\in
%TCIMACRO{\U{2115} }%
%BeginExpansion
\mathbb{N}
%EndExpansion
^{+}$, there exists a constant $C_{j,N}$ such that for any $p,q=1,2,\cdots
,m $,
\begin{eqnarray}
&&\sum_{k=0}^{\infty }\sum_{l=0}^{\infty }\int_{%
%TCIMACRO{\U{211d} }%
%BeginExpansion
\mathbb{R}
%EndExpansion
^{m}}\int_{%
%TCIMACRO{\U{211d} }%
%BeginExpansion
\mathbb{R}
%EndExpansion
^{m}}\left\vert R_{k,l}(\mu _{1},\mu _{2},\left( iu_{1}^{(p)}-\frac{\mu
_{1}^{(p)}}{\left\vert \mu _{1}\right\vert }\frac{1}{4}\left\vert
z_{1}\right\vert ^{2}\right) ^{N}\left( iu_{2}^{(q)}-\frac{\mu _{2}^{(q)}}{%
\left\vert \mu _{2}\right\vert }\frac{1}{4}\left\vert z_{2}\right\vert
^{2}\right) ^{N}F\right\vert  \label{R_2} \\
&&\times \frac{\left( k+n-1\right) !}{k!}\frac{\left( l+n-1\right) !}{l!}%
\left\vert \mu _{1}\right\vert ^{n}\left\vert \mu _{2}\right\vert ^{n}d\mu
_{1}d\mu _{2}\leq C_{j,N},  \notag
\end{eqnarray}%
then
\begin{eqnarray*}
&&\left\vert (z_{1},u_{1})\right\vert ^{4N}\left\vert
(z_{2},u_{2})\right\vert ^{4N}\left\vert
F((z_{1},u_{1}),(z_{2},u_{2}))\right\vert \\
&=&\left( \sup_{p}\left\vert u_{1}^{(p)}\right\vert ^{2}+\frac{%
\sup_{p}\left\vert \mu _{1}^{(p)}\right\vert ^{2}}{\left\vert \mu
_{1}\right\vert ^{2}}\frac{1}{16}\left\vert z_{1}\right\vert ^{4}\right)
^{N}\left( \sup_{q}\left\vert u_{2}^{(q)}\right\vert ^{2}+\frac{%
\sup_{q}\left\vert \mu _{2}^{(q)}\right\vert ^{2}}{\left\vert \mu
_{2}\right\vert ^{2}}\frac{1}{16}\left\vert z_{2}\right\vert ^{4}\right)
^{N}\left\vert F((z_{1},u_{1}),(z_{2},u_{2}))\right\vert \\
&=&\sup_{p}\left( \left\vert u_{1}^{(p)}\right\vert ^{2}+\frac{\left\vert
\mu _{1}^{(p)}\right\vert ^{2}}{\left\vert \mu _{1}\right\vert ^{2}}\frac{1}{%
16}\left\vert z_{1}\right\vert ^{4}\right) ^{N}\sup_{q}\left( \left\vert
u_{2}^{(q)}\right\vert ^{2}+\frac{\left\vert \mu _{2}^{(q)}\right\vert ^{2}}{%
\left\vert \mu _{2}\right\vert ^{2}}\frac{1}{16}\left\vert z_{2}\right\vert
^{4}\right) ^{N}\left\vert F((z_{1},u_{1}),(z_{2},u_{2}))\right\vert ^{2} \\
&=&\sup_{p}\left\vert \left( iu_{1}^{(p)}-\frac{\mu _{1}^{(p)}}{\left\vert
\mu _{1}\right\vert }\frac{1}{4}\left\vert z_{1}\right\vert ^{2}\right)
^{N}\left( iu_{2}^{(q)}-\frac{\mu _{2}^{(q)}}{\left\vert \mu _{2}\right\vert
}\frac{1}{4}\left\vert z_{2}\right\vert ^{2}\right) ^{N}F\right\vert ^{2} \\
&\leq &C_{j,N}^{2}.
\end{eqnarray*}%
This implies that for any $\mu ,\mu _{2}\in
%TCIMACRO{\U{211d} }%
%BeginExpansion
\mathbb{R}
%EndExpansion
^{m}\backslash \{0\}$,
\begin{eqnarray}
\left\vert K_{j}^{\alpha }\left( \left( x_{1},u_{1}\right) ,\left(
x_{2},u_{2}\right) \right) \right\vert &=&\left\vert F\left( \left( A_{\frac{%
\mu _{1}}{\left\vert \mu _{1}\right\vert }}x_{1},u_{1}\right) ,\left( A_{%
\frac{\mu _{2}}{\left\vert \mu _{2}\right\vert }}x_{2},u_{2}\right) \right)
\right\vert  \notag \\
&\leq &C_{j,N}\left( 1+\left\vert \left( A_{\frac{\mu _{1}}{\left\vert \mu
_{1}\right\vert }}x_{1},u_{1}\right) \right\vert \right) ^{-2N}\left(
1+\left\vert \left( A_{\frac{\mu _{2}}{\left\vert \mu _{2}\right\vert }%
}x_{2},u_{2}\right) \right\vert \right) ^{-2N}.  \label{Kernel_j}
\end{eqnarray}%
We now prove (\ref{R_2}). Fixing $p,q=1,2,\cdots ,m$. We first calculate
\begin{equation*}
R_{k,l}\left( \mu _{1},\mu _{2},\left( iu_{1}^{(p)}-\frac{\mu _{1}^{(p)}}{%
\left\vert \mu _{1}\right\vert }\frac{1}{4}\left\vert z_{1}\right\vert
^{2}\right) \left( iu_{2}-\frac{\mu _{2}^{(q)}}{\left\vert \mu
_{2}\right\vert }\frac{1}{4}\left\vert z_{2}\right\vert ^{2}\right) F\right).
\end{equation*}%
From (\ref{equmain1}), we know that
\begin{eqnarray*}
&&R_{k,l}(\mu _{1},\mu _{2},iu_{1}^{(p)}\cdot iu_{2}^{(q)}F) \\
&=&\frac{2^{(1-n)}k!}{(k+n-1)!}\frac{2^{(1-n)}l!}{(l+n-1)!}\int_{0}^{\infty
}\int_{0}^{\infty }\left( iu_{1}^{(p)}\cdot iu_{2}^{(q)}F\right) ^{\mu
_{1},\mu _{2}}(r_{1},r_{2})\times \varphi _{k}^{\left\vert \mu
_{1}\right\vert }(r_{1})\varphi _{l}^{\left\vert \mu _{2}\right\vert
}(r_{2})r_{1}^{2n-1}r_{2}^{2n-1}dr_{1}dr_{2}.
\end{eqnarray*}%
Since that%
\begin{equation*}
\left( iu_{1}^{(p)}\cdot iu_{2}^{(q)}F\right) ^{\mu _{1},\mu
_{2}}(r_{1},r_{2})=\frac{\partial }{\partial \mu _{1}^{(p)}}\frac{\partial }{%
\partial \mu _{2}^{(q)}}F^{\mu _{1},\mu _{2}}(r_{1},r_{2}),
\end{equation*}%
we have
\begin{eqnarray*}
&&R_{k,l}(\mu _{1},\mu _{2},iu_{1}^{(p)}\cdot iu_{2}^{(q)}F) \\
&=&\frac{\partial }{\partial \mu _{1}^{(p)}}\frac{\partial }{\partial \mu
_{2}^{(q)}}R_{k,l}(\mu _{1},\mu _{2},F) \\
&&-\frac{2^{(1-n)}k!}{(k+n-1)!}\frac{2^{(1-n)}l!}{(l+n-1)!}\int_{0}^{\infty
}\int_{0}^{\infty }\frac{\partial }{\partial \mu _{1}^{(p)}}F^{\mu _{1},\mu
_{2}}(r_{1},r_{2})\frac{\partial }{\partial \mu _{2}^{(q)}}\left( \varphi
_{k}^{\left\vert \mu _{1}\right\vert }(r_{1})\varphi _{l}^{\left\vert \mu
_{2}\right\vert }(r_{2})r_{1}^{2n-1}r_{2}^{2n-1}\right) dr_{1}dr_{2} \\
&&-\frac{2^{(1-n)}k!}{(k+n-1)!}\frac{2^{(1-n)}l!}{(l+n-1)!}\int_{0}^{\infty
}\int_{0}^{\infty }\frac{\partial }{\partial \mu _{2}^{(p)}}F^{\mu _{1},\mu
_{2}}(r_{1},r_{2})\frac{\partial }{\partial \mu _{1}^{(q)}}\left( \varphi
_{k}^{\left\vert \mu _{1}\right\vert }(r_{1})\varphi _{l}^{\left\vert \mu
_{2}\right\vert }(r_{2})r_{1}^{2n-1}r_{2}^{2n-1}\right) dr_{1}dr_{2} \\
&&-\frac{2^{(1-n)}k!}{(k+n-1)!}\frac{2^{(1-n)}l!}{(l+n-1)!}\int_{0}^{\infty
}\int_{0}^{\infty }F^{\mu _{1},\mu _{2}}(r_{1},r_{2})\frac{\partial }{%
\partial \mu _{1}^{(p)}}\frac{\partial }{\partial \mu _{2}^{(q)}}\left(
\varphi _{k}^{\left\vert \mu _{1}\right\vert }(r_{1})\varphi
_{l}^{\left\vert \mu _{2}\right\vert }(r_{2})r_{1}^{2n-1}r_{2}^{2n-1}\right)
dr_{1}dr_{2}.
\end{eqnarray*}%
(\ref{prime}) tells
\begin{eqnarray*}
\frac{\partial }{\partial \mu _{1}^{(p)}}\varphi _{k}^{\left\vert \mu
_{1}\right\vert }(r_{1}) &=&\frac{\mu _{1}^{(p)}}{\left\vert \mu
_{1}\right\vert }\left( \left\vert \mu _{1}\right\vert ^{-1}k\varphi
_{k}^{\left\vert \mu _{1}\right\vert }(r_{1})-\left\vert \mu _{1}\right\vert
^{-1}(k+n-1)\varphi _{k-1}^{\left\vert \mu _{1}\right\vert }(r_{1})-\frac{1}{%
4}r_{1}^{2}\varphi _{k}^{\left\vert \mu _{1}\right\vert }(r_{1})\right) , \\
\frac{\partial }{\partial \mu _{2}^{(q)}}\varphi _{l}^{\left\vert \mu
_{2}\right\vert }(r) &=&\frac{\mu _{2}^{(q)}}{\left\vert \mu _{2}\right\vert
}\left( \left\vert \mu _{2}\right\vert ^{-1}l\varphi _{l}^{\left\vert \mu
_{2}\right\vert }(r_{2})-\left\vert \mu _{2}\right\vert ^{-1}(l+n-1)\varphi
_{l-1}^{\left\vert \mu _{2}\right\vert }(r_{2})-\frac{1}{4}r_{2}^{2}\varphi
_{l}^{\left\vert \mu _{2}\right\vert }(r_{2})\right) ,
\end{eqnarray*}%
then, it follows that%
\begin{eqnarray}
&&R_{k,l}(\mu _{1},\mu _{2},iu_{1}^{(p)}\cdot iu_{2}^{(q)}F)  \label{t1t2} \\
&=&\frac{\partial }{\partial \mu _{1}^{(p)}}\frac{\partial }{\partial \mu
_{2}^{(q)}}R_{k,l}(\mu _{1},\mu _{2},F)-\frac{\mu _{1}^{(p)}}{\left\vert \mu
_{1}\right\vert }\frac{\mu _{2}^{(q)}}{\left\vert \mu _{2}\right\vert }%
R_{k,l}(\mu _{1},\mu _{2},\frac{1}{4}\left\vert z_{1}\right\vert ^{2}\frac{1%
}{4}\left\vert z_{2}\right\vert ^{2}F)  \notag \\
&&+\frac{\mu _{1}^{(p)}}{\left\vert \mu _{1}\right\vert }\frac{\partial }{%
\partial \mu _{2}^{(q)}}R_{k,l}(\mu _{1},\mu _{2},\frac{1}{4}\left\vert
z_{1}\right\vert ^{2}F)+\frac{\mu _{2}^{(q)}}{\left\vert \mu _{2}\right\vert
}\frac{\partial }{\partial \mu _{1}^{(p)}}R_{k,l}(\mu _{1},\mu _{2},\frac{1}{%
4}\left\vert z_{2}\right\vert ^{2})  \notag \\
&&-l\frac{\mu _{2}^{(q)}}{\left\vert \mu _{2}\right\vert ^{2}}\frac{\partial
}{\partial \mu _{1}^{(p)}}\left( R_{k,l}(\mu _{1},\mu _{2},F)-R_{k,l-1}(\mu
_{1},\mu _{2},F)\right)  \notag \\
&&-k\frac{\mu _{1}^{(p)}}{\left\vert \mu _{1}\right\vert ^{2}}\frac{\partial
}{\partial \mu _{2}^{(q)}}\left( R_{k,l}(\mu _{1},\mu _{2},F)-R_{k-1,l}(\mu
_{1},\mu _{2},F)\right)  \notag \\
&&+kl\frac{\mu _{1}^{(p)}}{\left\vert \mu _{1}\right\vert ^{2}}\frac{\mu
_{2}^{(q)}}{\left\vert \mu _{2}\right\vert ^{2}}\left( R_{k,l-1}(\mu
_{1},\mu _{2},F)-R_{k,l}(\mu _{1},\mu _{2},F)\right)  \notag \\
&&+kl\frac{\mu _{1}^{(p)}}{\left\vert \mu _{1}\right\vert ^{2}}\frac{\mu
_{2}^{(q)}}{\left\vert \mu _{2}\right\vert ^{2}}\left( R_{k-1,l}(\mu
_{1},\mu _{2},F)-R_{k-1,l-1}(\mu _{1},\mu _{2},F)\right)  \notag \\
&&+k\frac{\mu _{1}^{(p)}}{\left\vert \mu _{1}\right\vert ^{2}}\frac{\mu
_{2}^{(q)}}{\left\vert \mu _{2}\right\vert }\left( R_{k,l}(\mu _{1},\mu _{2},%
\frac{1}{4}\left\vert z_{2}\right\vert ^{2}F)-R_{k-1,l}(\mu _{1},\mu _{2},%
\frac{1}{4}\left\vert z_{2}\right\vert ^{2}F)\right)  \notag \\
&&+l\frac{\mu _{1}^{(p)}}{\left\vert \mu _{1}\right\vert }\frac{\mu
_{2}^{(q)}}{\left\vert \mu _{2}\right\vert ^{2}}\left( R_{k,l}(\mu _{1},\mu
_{2},\frac{1}{4}\left\vert z_{1}\right\vert ^{2}F)-R_{k,l-1}(\mu _{1},\mu
_{2},\frac{1}{4}\left\vert z_{1}\right\vert ^{2}F)\right) .  \notag
\end{eqnarray}%
In the same way, we can get that%
\begin{eqnarray}
&&R_{k,l}(\mu _{1},\mu _{2},-iu_{1}^{(p)}\cdot \frac{\mu _{2}^{(q)}}{%
\left\vert \mu _{2}\right\vert }\frac{1}{4}\left\vert z_{2}\right\vert ^{2}F)
\label{T1z2} \\
&=&-\frac{\mu _{2}^{(q)}}{\left\vert \mu _{2}\right\vert }\frac{\partial }{%
\partial \mu _{1}^{(p)}}R_{k,l}(\mu _{1},\mu _{2},\frac{1}{4}\left\vert
z_{2}\right\vert ^{2}F)-\frac{\mu _{1}^{(p)}}{\left\vert \mu _{1}\right\vert
}\frac{\mu _{2}^{(q)}}{\left\vert \mu _{2}\right\vert }R_{k,l}(\mu _{1},\mu
_{2},\frac{1}{4}\left\vert z_{1}\right\vert ^{2}\frac{1}{4}\left\vert
z_{2}\right\vert ^{2}F)  \notag \\
&&+k\frac{\mu _{1}^{(p)}}{\left\vert \mu _{1}\right\vert ^{2}}\frac{\mu
_{2}^{(q)}}{\left\vert \mu _{2}\right\vert }\left( R_{k,l}(\mu _{1},\mu _{2},%
\frac{1}{4}\left\vert z_{2}\right\vert ^{2}F)-R_{k-1,l}(\mu _{1},\mu _{2},%
\frac{1}{4}\left\vert z_{2}\right\vert ^{2}F)\right) ,  \notag
\end{eqnarray}%
and
\begin{eqnarray}
&&R_{k,l}(\mu _{1},\mu _{2},-iu_{2}^{(q)}\cdot \frac{\mu _{1}^{(p)}}{%
\left\vert \mu _{1}\right\vert }\frac{1}{4}\left\vert z_{1}\right\vert ^{2}F)
\label{T2z1} \\
&=&-\frac{\mu _{1}^{(p)}}{\left\vert \mu _{1}\right\vert }\frac{\partial }{%
\partial \mu _{2}^{(q)}}R_{k,l}(\mu _{1},\mu _{2},\frac{1}{4}\left\vert
z_{1}\right\vert ^{2}F)-\frac{\mu _{1}^{(p)}}{\left\vert \mu _{1}\right\vert
}\frac{\mu _{2}^{(q)}}{\left\vert \mu _{2}\right\vert }R_{k,l}(\mu _{1},\mu
_{2},\frac{1}{4}\left\vert z_{1}\right\vert ^{2}\frac{1}{4}\left\vert
z_{2}\right\vert ^{2}F)  \notag \\
&&+l\frac{\mu _{1}^{(p)}}{\left\vert \mu _{1}\right\vert }\frac{\mu
_{2}^{(q)}}{\left\vert \mu _{2}\right\vert ^{2}}\left( R_{k,l}(\mu _{1},\mu
_{2},\frac{1}{4}\left\vert z_{1}\right\vert ^{2}F)-R_{k,l-1}(\mu _{1},\mu
_{2},\frac{1}{4}\left\vert z_{1}\right\vert ^{2}F)\right) .  \notag
\end{eqnarray}%
Combining (\ref{t1t2}), (\ref{T1z2}) and (\ref{T2z1}), we have that
\begin{eqnarray*}
&&R_{k,l}\left( \mu _{1},\mu _{2},\left( iu_{1}^{(p)}-\frac{\mu _{1}^{(p)}}{%
\left\vert \mu _{1}\right\vert }\frac{1}{4}\left\vert z_{1}\right\vert
^{2}\right) \left( iu_{2}^{(q)}-\frac{\mu _{2}^{(q)}}{\left\vert \mu
_{2}\right\vert }\frac{1}{4}\left\vert z_{2}\right\vert ^{2}\right) F\right)
\\
&=&R_{k,l}\left( \mu _{1},\mu _{2},iu_{1}^{(p)}\cdot iu_{2}^{(q)}F\right)
+R_{k,l}\left( \mu _{1},\mu _{2},-iu_{1}^{(p)}\frac{\mu _{2}^{(q)}}{%
\left\vert \mu _{2}\right\vert }\frac{1}{4}\left\vert z_{2}\right\vert
^{2}F\right) \\
&&+R_{k,l}\left( \mu _{1},\mu _{2},-iu_{2}^{(q)}\frac{\mu _{1}^{(p)}}{%
\left\vert \mu _{1}\right\vert }\frac{1}{4}\left\vert z_{1}\right\vert
^{2}F\right) +R_{k,l}\left( \mu _{1},\mu _{2},\frac{\mu _{1}^{(p)}}{%
\left\vert \mu _{1}\right\vert }\frac{1}{4}\left\vert z_{1}\right\vert
^{2}\cdot \frac{\mu _{2}^{(q)}}{\left\vert \mu _{2}\right\vert }\frac{1}{4}%
\left\vert z_{2}\right\vert ^{2}F\right) \\
&=&\frac{\partial }{\partial \mu _{1}^{(p)}}\frac{\partial }{\partial \mu
_{2}^{(q)}}R_{k,l}(\mu _{1},\mu _{2},F)-2\frac{\mu _{1}^{(p)}}{\left\vert
\mu _{1}\right\vert }\frac{\mu _{2}^{(q)}}{\left\vert \mu _{2}\right\vert }%
R_{k,l}(\mu _{1},\mu _{2},\frac{1}{4}\left\vert z_{1}\right\vert ^{2}\frac{1%
}{4}\left\vert z_{2}\right\vert ^{2}F) \\
&&-l\frac{\mu _{2}^{(q)}}{\left\vert \mu _{2}\right\vert ^{2}}\frac{\partial
}{\partial \mu _{1}^{(p)}}\left( R_{k,l}(\mu _{1},\mu _{2},F)-R_{k,l-1}(\mu
_{1},\mu _{2},F)\right) -k\frac{\mu _{1}^{(p)}}{\left\vert \mu
_{1}\right\vert ^{2}}\frac{\partial }{\partial \mu _{2}^{(q)}}\left(
R_{k,l}(\mu _{1},\mu _{2},F)-R_{k-1,l}(\mu _{1},\mu _{2},F)\right) \\
&&+kl\frac{\mu _{1}^{(p)}}{\left\vert \mu _{1}\right\vert ^{2}}\frac{\mu
_{2}^{(q)}}{\left\vert \mu _{2}\right\vert ^{2}}\left( R_{k,l-1}(\mu
_{1},\mu _{2},F)-R_{k,l}(\mu _{1},\mu _{2},F)+R_{k-1,l}(\mu _{1},\mu
_{2},F)-R_{k-1,l-1}(\mu _{1},\mu _{2},F)\right) \\
&&+2k\frac{\mu _{1}^{(p)}}{\left\vert \mu _{1}\right\vert ^{2}}\frac{\mu
_{2}^{(q)}}{\left\vert \mu _{2}\right\vert }\left( R_{k,l}(\mu _{1},\mu _{2},%
\frac{1}{4}\left\vert z_{2}\right\vert ^{2}F)-R_{k-1,l}(\mu _{1},\mu _{2},%
\frac{1}{4}\left\vert z_{2}\right\vert ^{2}F)\right) \\
&&+2l\frac{\mu _{1}^{(p)}}{\left\vert \mu _{1}\right\vert }\frac{\mu
_{2}^{(q)}}{\left\vert \mu _{2}\right\vert ^{2}}\left( R_{k,l}(\mu _{1},\mu
_{2},\frac{1}{4}\left\vert z_{1}\right\vert ^{2}F)-R_{k,l-1}(\mu _{1},\mu
_{2},\frac{1}{4}\left\vert z_{1}\right\vert ^{2}F)\right) .
\end{eqnarray*}

Let $\sigma =(2k+n)\left\vert \mu _{1}\right\vert +(2l+n)\left\vert \mu
_{2}\right\vert $ and
\begin{equation*}
\psi (\sigma )=(1-\sigma )_{+}^{\alpha }\varphi (2^{j}(1-\sigma )).
\end{equation*}%
Notice that
\begin{equation*}
R_{k,l}(\mu _{1},\mu _{2},F)=\psi (\sigma )\left\vert \det A_{\frac{\mu _{1}%
}{\left\vert \mu _{1}\right\vert }}\right\vert \left\vert \det A_{\frac{\mu
_{2}}{\left\vert \mu _{2}\right\vert }}\right\vert \text{.}
\end{equation*}
Then,
\begin{eqnarray*}
&&R_{k,l}\left( \mu _{1},\mu _{2},\left( iu_{1}^{(p)}-\frac{\mu _{1}^{(p)}}{%
\left\vert \mu _{1}\right\vert }\frac{1}{4}\left\vert z_{1}\right\vert
^{2}\right) \left( iu_{2}^{(q)}-\frac{\mu _{2}^{(q)}}{\left\vert \mu
_{2}\right\vert }\frac{1}{4}\left\vert z_{2}\right\vert ^{2}\right) F\right)
\\
&=&\left\vert \det A_{\frac{\mu _{1}}{\left\vert \mu _{1}\right\vert }%
}\right\vert \left\vert \det A_{\frac{\mu _{2}}{\left\vert \mu
_{2}\right\vert }}\right\vert \left( \frac{\partial }{\partial \mu _{1}^{(p)}%
}\frac{\partial }{\partial \mu _{2}^{(q)}}\psi (\sigma )\right. \\
&&-l\frac{\mu _{2}^{(q)}}{\left\vert \mu _{2}\right\vert ^{2}}\frac{\partial
}{\partial \mu _{1}^{(p)}}\left( \psi (\sigma )-\psi (\sigma -2\left\vert
\mu _{2}\right\vert )\right) -k\frac{\mu _{1}^{(p)}}{\left\vert \mu
_{1}\right\vert ^{2}}\frac{\partial }{\partial \mu _{2}^{(q)}}\left( \psi
(\sigma )-\psi (\sigma -2\left\vert \mu _{1}\right\vert )\right) \\
&&-kl\frac{\mu _{1}^{(p)}}{\left\vert \mu _{1}\right\vert ^{2}}\frac{\mu
_{2}^{(q)}}{\left\vert \mu _{2}\right\vert ^{2}}\left( \psi (\sigma )-\psi
(\sigma -2\left\vert \mu _{2}\right\vert )-\psi (\sigma -2\left\vert \mu
_{1}\right\vert )+\psi (\sigma -2\left\vert \mu _{1}\right\vert -2\left\vert
\mu _{2}\right\vert )\right) \\
&&\left. +C_{1}k\frac{\mu _{1}^{(p)}}{\left\vert \mu _{1}\right\vert ^{2}}%
\frac{\mu _{2}^{(q)}}{\left\vert \mu _{2}\right\vert }\left( \psi (\sigma
)-\psi (\sigma -2\left\vert \mu _{1}\right\vert )\right) +C_{2}l\frac{\mu
_{1}^{(p)}}{\left\vert \mu _{1}\right\vert }\frac{\mu _{2}^{(q)}}{\left\vert
\mu _{2}\right\vert ^{2}}\left( \psi (\sigma )-\psi (\sigma -2\left\vert \mu
_{2}\right\vert )\right) -C_{3}\psi (\sigma )\right) \\
&&+\frac{\partial }{\partial \mu _{1}^{(p)}}\frac{\partial }{\partial \mu
_{2}^{(q)}}\left\vert \det A_{\frac{\mu _{1}}{\left\vert \mu _{1}\right\vert
}}\right\vert \left\vert \det A_{\frac{\mu _{2}}{\left\vert \mu
_{2}\right\vert }}\right\vert \psi (\sigma )-l\frac{\mu _{2}^{(q)}}{%
\left\vert \mu _{2}\right\vert ^{2}}\left\vert \det A_{\frac{\mu _{2}}{%
\left\vert \mu _{2}\right\vert }}\right\vert \frac{\partial }{\partial \mu
_{1}^{(p)}}\left( \left\vert \det A_{\frac{\mu _{1}}{\left\vert \mu
_{1}\right\vert }}\right\vert \right) \left( \psi (\sigma )-\psi (\sigma
-2\left\vert \mu _{2}\right\vert )\right) \\
&&-k\frac{\mu _{1}^{(p)}}{\left\vert \mu _{1}\right\vert ^{2}}\left\vert
\det A_{\frac{\mu _{1}}{\left\vert \mu _{1}\right\vert }}\right\vert \left(
\frac{\partial }{\partial \mu _{2}^{(q)}}\left\vert \det A_{\frac{\mu _{2}}{%
\left\vert \mu _{2}\right\vert }}\right\vert \right) \left( \psi (\sigma
)-\psi (\sigma -2\left\vert \mu _{1}\right\vert )\right) .
\end{eqnarray*}%
We set
\begin{eqnarray*}
\psi _{p,q}^{1}(\sigma ) &=&\left\vert \det A_{\frac{\mu _{1}}{\left\vert
\mu _{1}\right\vert }}\right\vert \left\vert \det A_{\frac{\mu _{2}}{%
\left\vert \mu _{2}\right\vert }}\right\vert \left( \frac{\partial }{%
\partial \mu _{1}^{(p)}}\frac{\partial }{\partial \mu _{2}^{(q)}}\psi
(\sigma )\right. \\
&&-l\frac{\mu _{2}^{(q)}}{\left\vert \mu _{2}\right\vert ^{2}}\frac{\partial
}{\partial \mu _{1}^{(p)}}\left( \psi (\sigma )-\psi (\sigma -2\left\vert
\mu _{2}\right\vert )\right) -k\frac{\mu _{1}^{(p)}}{\left\vert \mu
_{1}\right\vert ^{2}}\frac{\partial }{\partial \mu _{2}^{(q)}}\left( \psi
(\sigma )-\psi (\sigma -2\left\vert \mu _{1}\right\vert )\right) \\
&&-kl\frac{\mu _{1}^{(p)}}{\left\vert \mu _{1}\right\vert ^{2}}\frac{\mu
_{2}^{(q)}}{\left\vert \mu _{2}\right\vert ^{2}}\left( \psi (\sigma )-\psi
(\sigma -2\left\vert \mu _{2}\right\vert )-\psi (\sigma -2\left\vert \mu
_{1}\right\vert )+\psi (\sigma -2\left\vert \mu _{1}\right\vert -2\left\vert
\mu _{2}\right\vert )\right) \\
&&\left. +C_{1}k\frac{\mu _{1}^{(p)}}{\left\vert \mu _{1}\right\vert ^{2}}%
\frac{\mu _{2}^{(q)}}{\left\vert \mu _{2}\right\vert }\left( \psi (\sigma
)-\psi (\sigma -2\left\vert \mu _{1}\right\vert )\right) +C_{2}l\frac{\mu
_{1}^{(p)}}{\left\vert \mu _{1}\right\vert }\frac{\mu _{2}^{(q)}}{\left\vert
\mu _{2}\right\vert ^{2}}\left( \psi (\sigma )-\psi (\sigma -2\left\vert \mu
_{2}\right\vert )\right) -C_{3}\psi (\sigma )\right) .
\end{eqnarray*}%
Notice that the function $\psi (\sigma )$ has two properties: the first one
is that $\psi (\sigma )$ is supported in a set of the form $1-2^{-j+1}\leq
(2k+n)\left\vert \mu _{1}\right\vert +(2l+n)\left\vert \mu _{2}\right\vert
\leq 1-2^{-j-1}$ for any $k$ and $l$, and the second is
\begin{equation*}
\int_{%
%TCIMACRO{\U{211d} }%
%BeginExpansion
\mathbb{R}
%EndExpansion
^{m}}\int_{%
%TCIMACRO{\U{211d} }%
%BeginExpansion
\mathbb{R}
%EndExpansion
^{m}}\psi ((2k+n)\left\vert \mu _{1}\right\vert ,(2l+n)\left\vert \mu
_{2}\right\vert )\left\vert \mu _{1}\right\vert ^{n}\left\vert \mu
_{2}\right\vert ^{n}d\mu _{1}d\mu _{2}\leq C2^{-j\alpha
}2^{-j}(2k+n)^{-n-m}(2l+n)^{-n-m}\text{. }
\end{equation*}%
Clearly, $\psi _{p,q}^{1}(\sigma )$ also satisfies the first property. We
claim that it satisfies
\begin{equation}
\int_{%
%TCIMACRO{\U{211d} }%
%BeginExpansion
\mathbb{R}
%EndExpansion
^{m}}\int_{%
%TCIMACRO{\U{211d} }%
%BeginExpansion
\mathbb{R}
%EndExpansion
^{m}}\psi _{p,q}^{1}((2k+n)\left\vert \mu _{1}\right\vert ,(2l+n)\left\vert
\mu _{2}\right\vert )\left\vert \mu _{1}\right\vert ^{n}\left\vert \mu
_{2}\right\vert ^{n}d\mu _{1}d\mu _{2}\leq C2^{-j(\alpha
-4)}2^{-j}(2k+n)^{-n-m}(2l+n)^{-n-m}.  \label{K2}
\end{equation}%
To verify (\ref{K2}), we rewrite $\psi _{p,q}^{1}(\sigma )$ as
\begin{eqnarray*}
&&\psi _{p,q}^{1}(\sigma ) \\
&=&\frac{nk}{2}\frac{\mu _{1}}{\left\vert \mu _{1}\right\vert ^{2}}\frac{\mu
_{2}}{\left\vert \mu _{2}\right\vert ^{2}}\frac{\partial }{\partial k}\frac{%
\partial }{\partial l}\psi ((2k+n)\left\vert \mu _{1}\right\vert
+(2l+n)\left\vert \mu _{2}\right\vert ) \\
&&-\frac{nk}{2}\frac{\mu _{1}}{\left\vert \mu _{1}\right\vert ^{2}}\frac{\mu
_{2}}{\left\vert \mu _{2}\right\vert ^{2}}\frac{\partial }{\partial l}\left(
\psi ((2k+n)\left\vert \mu _{1}\right\vert +(2l+n)\left\vert \mu
_{2}\right\vert )-\psi ((2k-2+n)\left\vert \mu _{1}\right\vert
+(2l+n)\left\vert \mu _{2}\right\vert )\right) \\
&&+\frac{nl}{2}\frac{\mu _{1}}{\left\vert \mu _{1}\right\vert ^{2}}\frac{\mu
_{2}}{\left\vert \mu _{2}\right\vert ^{2}}\frac{\partial }{\partial k}\frac{%
\partial }{\partial l}\psi ((2k+n)\left\vert \mu _{1}\right\vert
+(2l+n)\left\vert \mu _{2}\right\vert ) \\
&&-\frac{nl}{2}\frac{\mu _{1}}{\left\vert \mu _{1}\right\vert ^{2}}\frac{\mu
_{2}}{\left\vert \mu _{2}\right\vert ^{2}}\frac{\partial }{\partial k}\left(
\psi ((2k+n)\left\vert \mu _{1}\right\vert +(2l+n)\left\vert \mu
_{2}\right\vert )-\psi ((2k+n)\left\vert \mu _{1}\right\vert
+(2l-2+n)\left\vert \mu _{2}\right\vert )\right) \\
&&+2kl\frac{\mu _{1}}{\left\vert \mu _{1}\right\vert ^{2}}\frac{\mu _{2}}{%
\left\vert \mu _{2}\right\vert ^{2}}\frac{\partial }{\partial k}\frac{%
\partial }{\partial l}\psi ((2k+n)\left\vert \mu _{1}\right\vert
+(2l+n)\left\vert \mu _{2}\right\vert ) \\
&&-2kl\frac{\mu _{1}}{\left\vert \mu _{1}\right\vert ^{2}}\frac{\mu _{2}}{%
\left\vert \mu _{2}\right\vert ^{2}}\frac{\partial }{\partial k}\left( \psi
((2k+n)\left\vert \mu _{1}\right\vert +(2l+n)\left\vert \mu _{2}\right\vert
)-\psi ((2k+n)\left\vert \mu _{1}\right\vert +(2l-2+n)\left\vert \mu
_{2}\right\vert )\right) \\
&&+2kl\frac{\mu _{1}}{\left\vert \mu _{1}\right\vert ^{2}}\frac{\mu _{2}}{%
\left\vert \mu _{2}\right\vert ^{2}}\frac{\partial }{\partial k}\frac{%
\partial }{\partial l}\psi ((2k+n)\left\vert \mu _{1}\right\vert
+(2l+n)\left\vert \mu _{2}\right\vert ) \\
&&-2kl\frac{\mu _{1}}{\left\vert \mu _{1}\right\vert ^{2}}\frac{\mu _{2}}{%
\left\vert \mu _{2}\right\vert ^{2}}\frac{\partial }{\partial l}\left( \psi
((2k+n)\left\vert \mu _{1}\right\vert +(2l+n)\left\vert \mu _{2}\right\vert
)-\psi ((2k-2+n)\left\vert \mu _{1}\right\vert +(2l+n)\left\vert \mu
_{2}\right\vert )\right) \\
&&-kl\frac{\mu _{1}}{\left\vert \mu _{1}\right\vert ^{2}}\frac{\mu _{2}}{%
\left\vert \mu _{2}\right\vert ^{2}}\frac{\partial }{\partial k}\frac{%
\partial }{\partial l}\psi ((2k+n)\left\vert \mu _{1}\right\vert
+(2l+n)\left\vert \mu _{2}\right\vert ) \\
&&+kl\frac{\mu _{1}}{\left\vert \mu _{1}\right\vert ^{2}}\frac{\mu _{2}}{%
\left\vert \mu _{2}\right\vert ^{2}}\frac{\partial }{\partial k}\left( \psi
((2k+n)\left\vert \mu _{1}\right\vert +(2l+n)\left\vert \mu _{2}\right\vert
)-\psi ((2k+n)\left\vert \mu _{1}\right\vert +(2l-2+n)\left\vert \mu
_{2}\right\vert )\right) \\
&&+kl\frac{\mu _{1}}{\left\vert \mu _{1}\right\vert ^{2}}\frac{\mu _{2}}{%
\left\vert \mu _{2}\right\vert ^{2}}\frac{\partial }{\partial l}%
(2k+n)\left\vert \mu _{1}\right\vert +(2l+n)\left\vert \mu _{2}\right\vert )
\\
&&-kl\frac{\mu _{1}}{\left\vert \mu _{1}\right\vert ^{2}}\frac{\mu _{2}}{%
\left\vert \mu _{2}\right\vert ^{2}}\left( \psi ((2k+n)\left\vert \mu
_{1}\right\vert +(2l+n)\left\vert \mu _{2}\right\vert )-\psi
((2k+n)\left\vert \mu _{1}\right\vert +(2l-2+n)\left\vert \mu
_{2}\right\vert )\right) \\
&&-kl\frac{\mu _{1}}{\left\vert \mu _{1}\right\vert ^{2}}\frac{\mu _{2}}{%
\left\vert \mu _{2}\right\vert ^{2}}\frac{\partial }{\partial l}%
(2k-2+n)\left\vert \mu _{1}\right\vert +(2l+n)\left\vert \mu _{2}\right\vert
) \\
&&+kl\frac{\mu _{1}}{\left\vert \mu _{1}\right\vert ^{2}}\frac{\mu _{2}}{%
\left\vert \mu _{2}\right\vert ^{2}}\left( \psi ((2k-2+n)\left\vert \mu
_{1}\right\vert +(2l+n)\left\vert \mu _{2}\right\vert )-\psi
((2k-2+n)\left\vert \mu _{1}\right\vert +(2l-2+n)\left\vert \mu
_{2}\right\vert )\right) \\
&&-C_{1}k\frac{\mu _{1}}{\left\vert \mu _{1}\right\vert ^{2}}\frac{\mu _{2}}{%
\left\vert \mu _{2}\right\vert }\frac{\partial }{\partial k}\psi
((2k+n)\left\vert \mu _{1}\right\vert +(2l+n)\left\vert \mu _{2}\right\vert )
\\
&&+C_{1}k\frac{\mu _{1}}{\left\vert \mu _{1}\right\vert ^{2}}\frac{\mu _{2}}{%
\left\vert \mu _{2}\right\vert }\left( \psi ((2k+n)\left\vert \mu
_{1}\right\vert +(2l+n)\left\vert \mu _{2}\right\vert )-\psi
((2k-2+n)\left\vert \mu _{1}\right\vert +(2l+n)\left\vert \mu
_{2}\right\vert )\right) \\
&&-C_{2}l\frac{\mu _{1}}{\left\vert \mu _{1}\right\vert }\frac{\mu _{2}}{%
\left\vert \mu _{2}\right\vert ^{2}}\frac{\partial }{\partial l}\psi
((2k+n)\left\vert \mu _{1}\right\vert +(2l+n)\left\vert \mu _{2}\right\vert )
\\
&&+C_{2}l\frac{\mu _{1}}{\left\vert \mu _{1}\right\vert }\frac{\mu _{2}}{%
\left\vert \mu _{2}\right\vert ^{2}}\left( \psi ((2k+n)\left\vert \mu
_{1}\right\vert +(2l+n)\left\vert \mu _{2}\right\vert )-\psi
((2k+n)\left\vert \mu _{1}\right\vert +(2l-2+n)\left\vert \mu
_{2}\right\vert )\right) \\
&&+\left( \frac{n^{2}}{4}-2kl\right) \frac{\mu _{1}}{\left\vert \mu
_{1}\right\vert ^{2}}\frac{\mu _{2}}{\left\vert \mu _{2}\right\vert ^{2}}%
\frac{\partial }{\partial k}\frac{\partial }{\partial l}\psi
((2k+n)\left\vert \mu _{1}\right\vert +(2l+n)\left\vert \mu _{2}\right\vert )
\\
&&+C_{1}k\frac{\mu _{1}}{\left\vert \mu _{1}\right\vert ^{2}}\frac{\mu _{2}}{%
\left\vert \mu _{2}\right\vert }\frac{\partial }{\partial k}\psi
((2k+n)\left\vert \mu _{1}\right\vert +(2l+n)\left\vert \mu _{2}\right\vert
)+C_{2}l\frac{\mu _{1}}{\left\vert \mu _{1}\right\vert }\frac{\mu _{2}}{%
\left\vert \mu _{2}\right\vert ^{2}}\frac{\partial }{\partial l}\psi
((2k+n)\left\vert \mu _{1}\right\vert +(2l+n)\left\vert \mu _{2}\right\vert )
\\
&&-C_{3}\psi ((2k+n)\left\vert \mu _{1}\right\vert +(2l+n)\left\vert \mu
_{2}\right\vert ).
\end{eqnarray*}%
Using Taylor expansion, we have that
\begin{eqnarray*}
&&\psi _{p,q}^{1}(\sigma ) \\
&=&-kl\frac{\mu _{1}}{\left\vert \mu _{1}\right\vert ^{2}}\frac{\mu _{2}}{%
\left\vert \mu _{2}\right\vert ^{2}}\left( 16\left\vert \mu _{1}\right\vert
^{2}\left\vert \mu _{2}\right\vert
^{2}\int_{k-1}^{k}\int_{l-1}^{l}(t+1-k)(s+1-l)\psi ^{(4)}((2t+n)\left\vert
\mu _{1}\right\vert +(2s+n)\left\vert \mu _{2}\right\vert )dsdt\right) \\
&&+\frac{nk}{2}\frac{\mu _{1}}{\left\vert \mu _{1}\right\vert ^{2}}\frac{\mu
_{2}}{\left\vert \mu _{2}\right\vert ^{2}}\left( 8\left\vert \mu
_{1}\right\vert ^{2}\left\vert \mu _{2}\right\vert \int_{k-1}^{k}(t+1-k)\psi
^{(3)}((2t+n)\left\vert \mu _{1}\right\vert +(2l+n)\left\vert \mu
_{2}\right\vert )dt\right) \\
&&+\frac{nl}{2}\frac{\mu _{1}}{\left\vert \mu _{1}\right\vert ^{2}}\frac{\mu
_{2}}{\left\vert \mu _{2}\right\vert ^{2}}\left( 8\left\vert \mu
_{1}\right\vert \left\vert \mu _{2}\right\vert ^{2}\int_{l-1}^{l}(s+1-l)\psi
^{(3)}((2k+n)\left\vert \mu _{1}\right\vert +(2s+n)\left\vert \mu
_{2}\right\vert )ds\right) \\
&&+2kl\frac{\mu _{1}}{\left\vert \mu _{1}\right\vert ^{2}}\frac{\mu _{2}}{%
\left\vert \mu _{2}\right\vert ^{2}}\left( 8\left\vert \mu _{1}\right\vert
\left\vert \mu _{2}\right\vert ^{2}\int_{l-1}^{l}(s+1-l)\psi
^{(3)}((2k+n)\left\vert \mu _{1}\right\vert +(2s+n)\left\vert \mu
_{2}\right\vert )ds\right) \\
&&+2kl\frac{\mu _{1}}{\left\vert \mu _{1}\right\vert ^{2}}\frac{\mu _{2}}{%
\left\vert \mu _{2}\right\vert ^{2}}\frac{\partial }{\partial l}\left(
8\left\vert \mu _{1}\right\vert ^{2}\left\vert \mu _{2}\right\vert
\int_{k-1}^{k}(t+1-k)\psi ^{(3)}((2t+n)\left\vert \mu _{1}\right\vert
+(2l+n)\left\vert \mu _{2}\right\vert )dt\right) \\
&&-C_{1}k\frac{\mu _{1}}{\left\vert \mu _{1}\right\vert ^{2}}\frac{\mu _{2}}{%
\left\vert \mu _{2}\right\vert }\left( 4\left\vert \mu _{1}\right\vert
^{2}\int_{k-1}^{k}(t+1-k)\psi ^{(2)}((2t+n)\left\vert \mu _{1}\right\vert
+(2l+n)\left\vert \mu _{2}\right\vert )dt\right) \\
&&-C_{2}l\frac{\mu _{1}}{\left\vert \mu _{1}\right\vert }\frac{\mu _{2}}{%
\left\vert \mu _{2}\right\vert ^{2}}\left( 4\left\vert \mu _{2}\right\vert
^{2}\int_{l-1}^{l}(s+1-l)\psi ^{(2)}((2k+n)\left\vert \mu _{1}\right\vert
+(2s+n)\left\vert \mu _{2}\right\vert ds\right) \\
&&+(\frac{n^{2}}{4}-2kl)\frac{\mu _{1}}{\left\vert \mu _{1}\right\vert ^{2}}%
\frac{\mu _{2}}{\left\vert \mu _{2}\right\vert ^{2}}\left( 4\left\vert \mu
_{1}\right\vert \left\vert \mu _{2}\right\vert \psi ^{(2)}((2k+n)\left\vert
\mu _{1}\right\vert +(2l+n)\left\vert \mu _{2}\right\vert )\right) \\
&&+C_{1}k\frac{\mu _{1}}{\left\vert \mu _{1}\right\vert ^{2}}\frac{\mu _{2}}{%
\left\vert \mu _{2}\right\vert }2\left\vert \mu _{1}\right\vert \psi
^{(1)}((2k+n)\left\vert \mu _{1}\right\vert +(2l+n)\left\vert \mu
_{2}\right\vert ) \\
&&+C_{2}l\frac{\mu _{1}}{\left\vert \mu _{1}\right\vert }\frac{\mu _{2}}{%
\left\vert \mu _{2}\right\vert ^{2}}2\left\vert \mu _{2}\right\vert \psi
^{(1)}((2k+n)\left\vert \mu _{1}\right\vert +(2l+n)\left\vert \mu
_{2}\right\vert ) \\
&&-C_{3}\psi ((2k+n)\left\vert \mu _{1}\right\vert +(2l+n)\left\vert \mu
_{2}\right\vert ).
\end{eqnarray*}

It follows that%
\begin{eqnarray*}
&&\int_{\mathbb{R}^{m}}\int_{\mathbb{R}^{m}}\left\vert \psi
_{p,q}^{1}(\sigma )\right\vert \left\vert \mu _{1}\right\vert ^{n}\left\vert
\mu _{2}\right\vert ^{n}d\mu _{1}d\mu _{2} \\
&\leq &c_{1}2^{-j(\alpha
-4)}2^{-j}\int_{l-1}^{l}\int_{k-1}^{k}(2t+n)^{-n-m-1}(2s+n)^{-n-m-1}dtds \\
&&+c_{2}2^{-j(\alpha -3)}2^{-j}(2k+n)^{-n-m}\int_{l-1}^{l}(2s+n)^{-n-m-1}ds
\\
&&+c_{3}2^{-j(\alpha -3)}2^{-j}(2l+n)^{-n-m}\int_{k-1}^{k}(2t+n)^{-n-m-1}dt
\\
&&+c_{4}2^{-j(\alpha -2)}2^{-j}(2l+n)^{-n-m}\int_{k-1}^{k}(2t+n)^{-n-m-1}dt
\\
&&+c_{5}2^{-j(\alpha -2)}2^{-j}(2k+n)^{-n-m}\int_{l-1}^{l}(2s+n)^{-n-m-1}ds
\\
&&+c_{6}2^{-j(\alpha -2)}2^{-j}(2k+n)^{-n-m}(2s+n)^{-n-m} \\
&&+c_{7}2^{-j(\alpha -1)}2^{-j}(2k+n)^{-n-m}(2s+n)^{-n-m} \\
&&+c_{8}2^{-j\alpha }2^{-j}(2k+n)^{-n-m}(2s+n)^{-n-m} \\
&\leq &C2^{-j(\alpha -4)}2^{-j}(2k+n)^{-n-m}(2l+n)^{-n-m}.
\end{eqnarray*}%
This proves that $\psi _{p,q}^{1}(\sigma )$ has the property (\ref{K2}). At
the same time, applying the same way in Theorem \ref{kernel}, we can obtain
that
\begin{eqnarray*}
&&\int_{\mathbb{R}^{m}}\int_{\mathbb{R}^{m}}\left\vert \frac{\partial }{%
\partial \mu _{1}^{(p)}}\frac{\partial }{\partial \mu _{2}^{(q)}}\left\vert
\det A_{\frac{\mu _{1}}{\left\vert \mu _{1}\right\vert }}\right\vert
\left\vert \det A_{\frac{\mu _{2}}{\left\vert \mu _{2}\right\vert }%
}\right\vert -l\frac{\mu _{2}^{(q)}}{\left\vert \mu _{2}\right\vert ^{2}}%
\frac{\partial }{\partial \mu _{1}^{(p)}}\left( \left\vert \det A_{\frac{\mu
_{1}}{\left\vert \mu _{1}\right\vert }}\right\vert \left\vert \det A_{\frac{%
\mu _{2}}{\left\vert \mu _{2}\right\vert }}\right\vert \right) \left( \psi
(\sigma )-\psi (\sigma -2\left\vert \mu _{2}\right\vert )\right) \right. \\
&&\left. -k\frac{\mu _{1}^{(p)}}{\left\vert \mu _{1}\right\vert ^{2}}\left(
\frac{\partial }{\partial \mu _{2}^{(q)}}\left\vert \det A_{\frac{\mu _{1}}{%
\left\vert \mu _{1}\right\vert }}\right\vert \left\vert \det A_{\frac{\mu
_{2}}{\left\vert \mu _{2}\right\vert }}\right\vert \right) \left( \psi
(\sigma )-\psi (\sigma -2\left\vert \mu _{1}\right\vert )\right) \right\vert
\left\vert \mu _{1}\right\vert ^{n}\left\vert \mu _{2}\right\vert ^{n}d\mu
_{1}d\mu _{2} \\
&\leq &C2^{-j(\alpha -2)}2^{-j}(2k+n)^{-n-m}(2l+n)^{-n-m}.
\end{eqnarray*}%
It follows that
\begin{eqnarray*}
&&\int_{\mathbb{R}^{m}}\int_{\mathbb{R}^{m}}\left\vert R_{k,l}\left( \mu
_{1},\mu _{2},\left( iu_{1}^{(p)}-\frac{\mu _{1}^{(p)}}{\left\vert \mu
_{1}\right\vert }\frac{1}{4}\left\vert z_{1}\right\vert ^{2}\right) \left(
iu_{2}^{(q)}-\frac{\mu _{2}^{(q)}}{\left\vert \mu _{2}\right\vert }\frac{1}{4%
}\left\vert z_{2}\right\vert ^{2}\right) F\right) \right\vert \left\vert \mu
_{1}\right\vert ^{n}\left\vert \mu _{2}\right\vert ^{n}d\mu _{1}d\mu _{2} \\
&\leq &C2^{-j(\alpha -4)}2^{-j}(2k+n)^{-n-m}(2l+n)^{-n-m}.
\end{eqnarray*}%
An iteration of the process shows that for any $N\in
%TCIMACRO{\U{2115} }%
%BeginExpansion
\mathbb{N}
%EndExpansion
^{+}$,
\begin{eqnarray*}
&&\int_{\mathbb{R}^{m}}\int_{\mathbb{R}^{m}}\left\vert R_{k,l}\left( \mu
_{1},\mu _{2},\left( iu_{1}^{(p)}-\frac{\mu _{1}^{(p)}}{\left\vert \mu
_{1}\right\vert }\frac{1}{4}\left\vert z_{1}\right\vert ^{2}\right) \left(
iu_{2}^{(q)}-\frac{\mu _{2}^{(q)}}{\left\vert \mu _{2}\right\vert }\frac{1}{4%
}\left\vert z_{2}\right\vert ^{2}\right) ^{N}F\right) \right\vert \left\vert
\mu _{1}\right\vert ^{n}\left\vert \mu _{2}\right\vert ^{n}d\mu _{1}d\mu _{2}
\\
&\leq &C_{N}2^{-j(\alpha -4N)}2^{-j}(2k+n)^{-n-m}(2l+n)^{-n-m}.
\end{eqnarray*}%
Thus, for any $p,q=1,2,\cdots ,m$, we have
\begin{eqnarray*}
&&\sum_{k=0}^{\infty }\sum_{l=0}^{\infty }\int_{\mathbb{R}^{m}}\int_{\mathbb{%
R}^{m}}\left\vert R_{k,l}\left( \mu _{1},\mu _{2},\left( iu_{1}^{(p)}-\frac{%
\mu _{1}^{(p)}}{\left\vert \mu _{1}\right\vert }\frac{1}{4}\left\vert
z_{1}\right\vert ^{2}\right) \left( iu_{2}^{(q)}-\frac{\mu _{2}^{(q)}}{%
\left\vert \mu _{2}\right\vert }\frac{1}{4}\left\vert z_{2}\right\vert
^{2}\right) ^{N}F\right) \right\vert \\
&&\times \frac{\left( k+n-1\right) !}{k!}\frac{\left( l+n-1\right) !}{l!}%
\left\vert \mu _{1}\right\vert ^{n}\left\vert \mu _{2}\right\vert ^{n}d\mu
_{1}d\mu _{2}\leq C_{N}2^{-j(\alpha -4N+1)},
\end{eqnarray*}%
which yields that
\begin{equation}
\left\vert K_{j}^{\alpha }\left( \left( x_{1},u_{1}\right) ,\left(
x_{2},u_{2}\right) \right) \right\vert \leq C_{N}2^{-j(\alpha -4N+1)}\left(
1+\left\vert \left( A_{\frac{\mu _{1}}{\left\vert \mu _{1}\right\vert }%
}x_{1},u_{1}\right) \right\vert \right) ^{-2N}\left( 1+\left\vert \left( A_{%
\frac{\mu _{2}}{\left\vert \mu _{2}\right\vert }}x_{2},u_{2}\right)
\right\vert \right) ^{-2N}.  \label{Kj-kernel}
\end{equation}%
Consequently, for any fixed $(x_{1},u_{1}),(x_{2},u_{2})\in \mathbb{G}$ and $%
\mu _{1}$, $\mu _{2}\in
%TCIMACRO{\U{211d} }%
%BeginExpansion
\mathbb{R}
%EndExpansion
^{m}\backslash \{0\}$, whenever $\alpha >4N-1$, we have
\begin{eqnarray*}
\left\vert S^{\alpha }\left( \left( x_{1},u_{1}\right) ,\left(
x_{2},u_{2}\right) \right) \right\vert &\leq &\sum_{j=0}^{\infty
}C_{N}2^{-j(\alpha -4N+1)}\left( 1+\left\vert \left( A_{\frac{\mu _{1}}{%
\left\vert \mu _{1}\right\vert }}x_{1},u_{1}\right) \right\vert \right)
^{-2N}\left( 1+\left\vert \left( A_{\frac{\mu _{2}}{\left\vert \mu
_{2}\right\vert }}x_{2},u_{2}\right) \right\vert \right) ^{-2N} \\
&\leq &C_{N}\left( 1+\left\vert \left( A_{\frac{\mu _{1}}{\left\vert \mu
_{1}\right\vert }}x_{1},u_{1}\right) \right\vert \right) ^{-2N}\left(
1+\left\vert \left( A_{\frac{\mu _{2}}{\left\vert \mu _{2}\right\vert }%
}x_{2},u_{2}\right) \right\vert \right) ^{-2N}.
\end{eqnarray*}%
The proof of Theorem \ref{bilinear-kernel} is completed.
\end{proof}

\begin{corollary}
\label{cor2}Let $1\leq p_{1},p_{2}\leq \infty $ and $1/p=1/p_{1}+1/p_{2}$.
If $\alpha >2Q+3$, then $S^{\alpha }$ is bounded from $L^{p_{1}}(\mathbb{G}%
)\times L^{p_{2}}(\mathbb{G})$ into $L^{p}(\mathbb{G})$.
\end{corollary}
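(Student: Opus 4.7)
The plan is to follow the template of Corollary \ref{cor1}, but to exploit the product structure of the bilinear kernel estimate in Theorem \ref{bilinear-kernel} together with H\"older's inequality so as to decouple the two input functions.

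First I would take $N=\tfrac{Q}{2}+1$, so that the hypothesis $\alpha>2Q+3$ is precisely $\alpha>4N-1$ and Theorem \ref{bilinear-kernel} applies. Because that theorem provides the bound for \emph{any} nonzero $\mu_1,\mu_2$, I would fix an arbitrary unit vector $\eta\in\mathbb{S}^{m-1}$ (e.g.\ by taking $\mu_1=\mu_2=\eta$), which yields the factored pointwise estimate
\begin{equation*}
|S^{\alpha}((x_1,u_1),(x_2,u_2))|\le C_N\,K(x_1,u_1)\,K(x_2,u_2),\qquad K(x,u)=\bigl(1+|(A_\eta x,u)|\bigr)^{-2N}.
\end{equation*}
Plugging this into the integral representation of $S^{\alpha}(f,g)$ recorded in Section 2 yields the pointwise majorization
\begin{equation*}
|S^{\alpha}(f,g)(x,u)|\le C_N\,(|f|\ast K)(x,u)\,(|g|\ast K)(x,u).
\end{equation*}

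Next I would apply H\"older's inequality with $1/p=1/p_1+1/p_2$, followed by Young's inequality $L^{p_j}(\mathbb{G})\ast L^1(\mathbb{G})\hookrightarrow L^{p_j}(\mathbb{G})$, to obtain
\begin{equation*}
\|S^{\alpha}(f,g)\|_p\le C_N\,\||f|\ast K\|_{p_1}\,\||g|\ast K\|_{p_2}\le C_N\,\|K\|_1^{2}\,\|f\|_{p_1}\|g\|_{p_2}.
\end{equation*}
The only remaining point is to check that $\|K\|_1<\infty$, and this has already been done in Corollary \ref{cor1}: performing the change of variables $z=A_\eta x$, using the uniform bound \eqref{unitsphere} on $|\det A_\eta|$, and integrating in polar coordinates with respect to the homogeneous norm reduces $\|K\|_1$ to a finite constant multiple of $\int_1^{\infty}t^{Q-1-2N}\,dt$, which is finite thanks to $2N=Q+2>Q$.

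I do not expect a genuine obstacle here: the proof is essentially a bilinear repackaging of Corollary \ref{cor1}. The one feature that has to be used is precisely that the bound furnished by Theorem \ref{bilinear-kernel} splits as a product depending separately on $(x_1,u_1)$ and $(x_2,u_2)$, since this is what allows H\"older's inequality to decouple the two factors before Young's inequality is applied to each of them separately.
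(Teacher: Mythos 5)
Your proposal is correct and follows essentially the same route as the paper: both take $N=\tfrac{Q}{2}+1$, invoke Theorem \ref{bilinear-kernel}, use the product structure of the kernel bound together with H\"older's and Young's inequalities, and reduce the finiteness of the kernel integral to the change of variables $z=A_{\eta}x$ and the uniform determinant bound, exactly as in Corollary \ref{cor1}. The only difference is presentational (you record the pointwise majorization $|S^{\alpha}(f,g)|\le C_N(|f|\ast K)(|g|\ast K)$ explicitly before decoupling), which does not change the argument.
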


\begin{proof}
We take positive integer $N=\frac{Q}{2}+1$. If $\alpha >2Q+3$, we have $%
\alpha >4N-1$. So, Theorem \ref{bilinear-kernel} is available. Then,
applying the H\"{o}lder's inequality, the Young's inequality and changing
variables, we conclude that
\begin{eqnarray*}
&&\left\Vert S_{R}^{\alpha }(f,g)\right\Vert _{p} \\
&\leq &C\left\Vert f\right\Vert _{p_{1}}\left\Vert g\right\Vert
_{p_{2}}\int_{\mathbb{G}}\left( 1+\left\vert \left( A_{\frac{\mu _{1}}{%
\left\vert \mu _{1}\right\vert }}x_{1},u_{1}\right) \right\vert \right)
^{-2N}dxdu_{1}\int_{\mathbb{G}}\left( 1+\left\vert \left( A_{\frac{\mu _{2}}{%
\left\vert \mu _{2}\right\vert }}x_{2},u_{2}\right) \right\vert \right)
^{-2N}dx_{2}du_{2} \\
&\leq &C\left\Vert f\right\Vert _{p_{1}}\left\Vert g\right\Vert
_{p_{2}}\int_{\mathbb{G}}\left( 1+\left\vert \left( z_{1},u_{1}\right)
\right\vert \right) ^{-2N}\left\vert \det A_{\frac{\mu _{1}}{\left\vert \mu
_{1}\right\vert }}\right\vert ^{-1}dz_{1}du_{1}\int_{\mathbb{G}}\left(
1+\left\vert \left( z_{2},u_{2}\right) \right\vert \right) ^{-2N}\left\vert
\det A_{\frac{\mu _{1}}{\left\vert \mu _{1}\right\vert }}\right\vert
^{-1}dz_{2}du_{2} \\
&\leq &C\left\Vert f\right\Vert _{p_{1}}\left\Vert g\right\Vert
_{p_{2}}\int_{\mathbb{G}}\left( 1+\left\vert \omega _{1}\right\vert \right)
^{-2N}d\omega _{1}\int_{\mathbb{G}}\left( 1+\left\vert \omega
_{2}\right\vert \right) ^{-2N}d\omega _{2} \\
&\leq &C\left\Vert f\right\Vert _{p_{1}}\left\Vert g\right\Vert
_{p_{2}}\left( \int_{1}^{\infty }t^{-2N+Q-1}dt\right) ^{2} \\
&\leq &C\left\Vert f\right\Vert _{p_{1}}\left\Vert g\right\Vert _{p_{2}}.
\end{eqnarray*}%
The proof is completed.
\end{proof}

Note that the index in Corollary \ref{cor1} and Corollary \ref{cor2} are
very high. Thus, in the rest part of this paper, we shall use different
methods to give lower indices.

\section{Boundedness of the Riesz means $S^{\protect\delta}$}

Let $\mathbb{G}$ be a M-type group with Lie algebra $\mathfrak{g}\,=%
\mathfrak{g}_{1}\oplus \mathfrak{g}_{2}$ where $\mathfrak{g}_{2}$ is the
center of $\mathfrak{g}$. As above, we assume that $\dim \mathfrak{g}_{1}=2n$
and $\dim \mathfrak{g}_{2}=m$. The mixed norm on $\mathbb{G}$ is defined by
\begin{equation*}
\left\Vert f\right\Vert _{L^{r}(\mathfrak{g}_{2})L^{p}(\mathfrak{g}%
_{1})}=\left( \int_{%
%TCIMACRO{\U{211d} }%
%BeginExpansion
\mathbb{R}
%EndExpansion
^{2n}}\left( \int_{%
%TCIMACRO{\U{211d} }%
%BeginExpansion
\mathbb{R}
%EndExpansion
^{m}}\left\vert f(x,u)\right\vert ^{r}du\right) ^{\frac{p}{r}}dx\right) ^{%
\frac{1}{p}}\text{,\ \ }1\leq p,r\leq \infty \text{. }
\end{equation*}%
Casarino and Ciatti \cite{Cas} proved the following Stein-Tomas restriction
theorem in terms of the mixed norms:

\begin{lemma}
\cite{Cas} \label{mainLemma}Let $\mathbb{G}$ be a M-type group with Lie
algebra $\mathfrak{g}\,=\mathfrak{g}_{1}\oplus \mathfrak{g}_{2}$ where $%
\mathfrak{g}_{2}$ is the center of $\mathfrak{g}$. Let $\dim \mathfrak{g}%
_{1}=2n$ and $\dim \mathfrak{g}_{2}=m$. Then, for all $1\leq p\leq 2\leq
q\leq \infty $, $1\leq r\leq \frac{2m+2}{m+3}$and all $f\in \mathscr{S}(%
\mathbb{G)}$, we have that
\begin{equation*}
\left\Vert P_{\lambda }f\right\Vert _{L^{r^{\prime }}(\mathfrak{g}_{2})L^{q}(%
\mathfrak{g}_{1})}\leq C\lambda ^{m(\frac{2}{r}-1)+n(\frac{1}{p}-\frac{1}{q}%
)-1}\left\Vert f\right\Vert _{L^{r}(\mathfrak{g}_{2})L^{p}(\mathfrak{g}_{1})}%
\text{.}
\end{equation*}
\end{lemma}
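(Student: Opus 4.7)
The plan is to combine a partial Fourier transform in the central variable $u$ with (i) uniform-in-$\eta$ spectral-projector bounds for the rescaled special Hermite operator on $\mathfrak{g}_1$ and (ii) the Stein--Tomas restriction theorem for the unit sphere $\mathbb{S}^{m-1}\subset\mathbb{R}^m$ in the central direction. The tell-tale exponent $r\le(2m+2)/(m+3)$ is exactly the Stein--Tomas range for $\mathbb{S}^{m-1}$, and the factor $\lambda^{m(2/r-1)-1}$ in the target inequality is what one gets by rescaling a Stein--Tomas extension estimate from a sphere of radius $\lambda/(2k+n)$ back to the unit sphere.

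First I would Fourier transform $P_\lambda f$ in $u$ and use formula (\ref{expansion}) with $\widetilde{e}_k^{\lambda\eta}=e_k^{\lambda/(2k+n)\eta}$: this shows that $(P_\lambda f)^\mu$ is a sum over $k$ of terms supported in $\mu$ on the sphere $\{|\mu|=\lambda/(2k+n)\}$, each a multiple of $\bigl(\Pi_k^{\lambda\eta}(f^\mu\circ A_\eta^{-1})\bigr)\circ A_\eta$ with $\eta=\mu/|\mu|$. By (\ref{unitsphere}) the change of variables $z=A_\eta x$ has Jacobian bounded above and below uniformly in $\eta$, so the classical Koch--Ricci/Stempak--Thangavelu spectral-projector bound
\[
\bigl\|\Pi_k^{\lambda\eta}g\bigr\|_{L^q(\mathfrak{g}_1)}\le C\bigl(\lambda(2k+n)\bigr)^{n(1/p-1/q)-1}\|g\|_{L^p(\mathfrak{g}_1)}
\]
transfers uniformly in $\eta$ and delivers the horizontal part of the estimate.

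Next I would dualize in the central direction. Integration against $d\sigma(\eta)$ on $\mathbb{S}^{m-1}$ is a Fourier extension operator, and Stein--Tomas (applicable exactly in the stated range of $r$) realizes it as an $L^r(\mathfrak{g}_2)\to L^{r'}(\mathfrak{g}_2)$ map; rescaling from $\mathbb{S}^{m-1}_{\rho_k}$ to $\mathbb{S}^{m-1}$ contributes a factor $\rho_k^{m(2/r-1)-1}$ with $\rho_k=\lambda/(2k+n)$. Combining central and horizontal estimates via Minkowski's inequality (so that $L^{r'}(\mathfrak{g}_2)$ is the outer norm and $L^q(\mathfrak{g}_1)$ the inner), inserting the weight $\lambda^{n+m-1}/(2\pi(2k+n))^{n+m}$ from (\ref{expansion}), and summing the resulting series in $k$ (which converges thanks to the strong decay in $2k+n$) assembles everything into the advertised exponent $m(2/r-1)+n(1/p-1/q)-1$.

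The main obstacle is step (ii): the spectral measure of $P_\lambda$ on the $\mu$-side lives on a \emph{countable union} of spheres of different radii, not on a single one, with a delicate interplay between the spherical coefficients and the Hermite projectors. The fix I would use is a Littlewood--Paley-type decomposition in $k$ combined with almost-orthogonality of the $\Pi_k^{\lambda\eta}$ across $k$, absorbing the $k$-sum into the weight $(2k+n)^{-(n+m)}$ already present in (\ref{expansion}); alternatively one could first establish the estimate on each single sphere $\{|\mu|=\rho_k\}$ and then appeal to a vector-valued version of Stein--Tomas. A closing dilation in $\lambda$ gathers the remaining powers of $\lambda$ and $2k+n$, completing the argument.
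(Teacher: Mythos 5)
The first thing to note is that the paper does not prove this lemma at all: it is quoted verbatim from Casarino and Ciatti \cite{Cas}, so there is no in-paper argument to compare against and your sketch has to be measured against the original source. Your overall architecture --- partial Fourier transform in $u$, so that $(P_{\lambda}f)^{\mu}$ is carried by the union of spheres $\{|\mu|=\lambda/(2k+n)\}$; special-Hermite projector bounds on $\mathfrak{g}_{1}$ made uniform in $\eta$ via (\ref{unitsphere}); Stein--Tomas on $\mathbb{S}^{m-1}$ in the central variable, which is exactly where the threshold $r\leq\frac{2m+2}{m+3}$ comes from; and a $k$-sum controlled by the weight $(2k+n)^{-(n+m)}$ --- is indeed the strategy of \cite{Cas}, and the identification of the Stein--Tomas exponent is correct.

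There are nevertheless genuine gaps. First, the projector bound you invoke, $\|\Pi_{k}^{\lambda\eta}g\|_{L^{q}}\leq C(\lambda(2k+n))^{n(1/p-1/q)-1}\|g\|_{L^{p}}$ for all $1\leq p\leq 2\leq q\leq\infty$, is false as stated: at $p=q=2$ the normalized operator $g\mapsto(\lambda/2\pi)^{n}\,g\times_{\lambda}\varphi_{k}^{\lambda}$ is an orthogonal projection, so its $L^{2}\to L^{2}$ norm is comparable to $1$, not to $(\lambda(2k+n))^{-1}$; the sharp Stempak--Thangavelu/Koch--Ricci exponents are different (for instance $n(1/p-1/2)-1/2$ in $2k+n$ for $L^{p}\to L^{2}$ on the admissible range), and the trailing $-1$ in the Lemma reflects that $P_{\lambda}$ is a spectral \emph{density} in $d\lambda$, not a property of the projector, so the exponent bookkeeping must be redone. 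Second, you have the mixed norm inverted: in the paper's definition $L^{r'}(\mathfrak{g}_{2})$ is the \emph{inner} norm (integration in $u$ first) and $L^{q}(\mathfrak{g}_{1})$ the outer one, so ``Minkowski with $L^{r'}(\mathfrak{g}_{2})$ outside'' is not what is required, and whether Minkowski runs in the needed direction depends on the relative size of $q$ and $r'$. Third, and most importantly, the step you yourself flag as the main obstacle --- a Stein--Tomas estimate for a \emph{function-valued} density carried by a countable union of spheres, compatible with the mixed norm --- is precisely where all the work in \cite{Cas} lies (a $TT^{*}$/analytic-interpolation argument treating both variables simultaneously rather than a concatenation of two one-variable estimates); ``Littlewood--Paley in $k$ plus almost-orthogonality'' and ``a vector-valued Stein--Tomas'' are placeholders, not arguments. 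As it stands the proposal is a correct road map with the hardest stretch of the road missing.
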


Particulerly, when $p=r$ and $q=p\prime $, we have the following result:

\begin{theorem}
\label{mainCo}Let $\mathbb{G}$ be a M-type group with Lie algebra $\mathfrak{%
g}\,=\mathfrak{g}_{1}\oplus \mathfrak{g}_{2}$ where $\mathfrak{g}_{2}$ is
the center of $\mathfrak{g}$. Let $\dim \mathfrak{g}_{1}=2n$ and $\dim
\mathfrak{g}_{2}=m$. Then, for all $1\leq p\leq \frac{2m+2}{m+3}$ and all $%
f\in \mathscr{S}(\mathbb{G)}$, we have that
\begin{equation*}
\left\Vert P_{\lambda }f\right\Vert _{L^{p^{\prime }}(\mathbb{G})}\leq
C\lambda ^{Q(\frac{1}{p}-\frac{1}{2})-1}\left\Vert f\right\Vert _{L^{p}(%
\mathbb{G})}.
\end{equation*}
\end{theorem}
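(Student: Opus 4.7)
The plan is to derive Theorem \ref{mainCo} as a direct specialization of the mixed-norm estimate in Lemma \ref{mainLemma}, taking $r = p$ and $q = p'$. The argument is essentially bookkeeping: one checks that the hypotheses carry over, collapses the mixed norms to pure $L^p$ norms via Fubini, and simplifies the resulting exponent.

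First, I would verify the range of admissible $p$. The hypothesis $1 \leq p \leq 2 \leq q \leq \infty$ of Lemma \ref{mainLemma} forces $p \leq 2$, while the hypothesis $1 \leq r \leq \tfrac{2m+2}{m+3}$ with $r = p$ requires $p \leq \tfrac{2m+2}{m+3}$. Since $\tfrac{2m+2}{m+3} < 2$ for every $m \geq 1$, the single standing assumption $1 \leq p \leq \tfrac{2m+2}{m+3}$ of Theorem \ref{mainCo} implies both constraints, and the conjugate $q = p' \geq 2$ is automatic.

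Next I would identify the mixed norms with the unmixed ones. With $r = p$ we have $r' = p'$, so $\|P_\lambda f\|_{L^{r'}(\mathfrak{g}_2) L^q(\mathfrak{g}_1)} = \|P_\lambda f\|_{L^{p'}(\mathfrak{g}_2) L^{p'}(\mathfrak{g}_1)}$, which coincides with $\|P_\lambda f\|_{L^{p'}(\mathbb{G})}$ by Fubini. The same identification on the right-hand side yields $\|f\|_{L^p(\mathfrak{g}_2) L^p(\mathfrak{g}_1)} = \|f\|_{L^p(\mathbb{G})}$.

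Finally, the exponent simplifies by direct computation. Substituting $r = p$ and $q = p'$ into the exponent $m(\tfrac{2}{r} - 1) + n(\tfrac{1}{p} - \tfrac{1}{q}) - 1$ of Lemma \ref{mainLemma} gives $m(\tfrac{2}{p} - 1) + n(\tfrac{2}{p} - 1) - 1 = (n+m)(\tfrac{2}{p} - 1) - 1 = Q(\tfrac{1}{p} - \tfrac{1}{2}) - 1$, where I used $\tfrac{1}{p} - \tfrac{1}{p'} = \tfrac{2}{p} - 1$ and $Q = 2n + 2m$. There is no genuine obstacle in this derivation; the full content of Theorem \ref{mainCo} is already encoded in Lemma \ref{mainLemma}, and the only care required is to match indices cleanly and to note that the given range of $p$ automatically accommodates both conditions imposed on $p$ and on $r$ by the lemma.
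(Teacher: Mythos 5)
Your proposal is correct and is exactly the route the paper takes: Theorem \ref{mainCo} is presented there as the special case $r=p$, $q=p'$ of Lemma \ref{mainLemma}, and your verification of the admissible range, the collapse of the mixed norms to $\|\cdot\|_{L^{p}(\mathbb{G})}$ and $\|\cdot\|_{L^{p'}(\mathbb{G})}$, and the simplification of the exponent to $Q(\tfrac{1}{p}-\tfrac{1}{2})-1$ using $Q=2n+2m$ all check out.
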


Applying Theorem \ref{mainCo}, we can obtain the $L^{p}$-boundedness of the
Riesz means $S^{\delta}$.

\begin{theorem}
\label{T1}Let $\mathbb{G}$ be a M-type group with Lie algebra $\mathfrak{g}%
\,=\mathfrak{g}_{1}\oplus \mathfrak{g}_{2}$ where $\mathfrak{g}_{2}$ is the
center of $\mathfrak{g}$. Let $\dim \mathfrak{g}_{1}=2n$ and $\dim \mathfrak{%
g}_{2}=m$. Suppose that $1\leq p\leq \frac{2m+2}{m+3}$. If $\delta >Q\left(
\frac{1}{p}-\frac{1}{2}\right) -\frac{1}{2}$, then $S^{\delta }$ is bounded
on $L^{p}(\mathbb{G})$.
\end{theorem}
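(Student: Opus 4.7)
The plan is to combine Theorem~\ref{mainCo} (the Stein--Tomas-type estimate) with a dyadic decomposition of the Riesz multiplier near its critical point $\lambda=1$, and to close the estimates via Stein's complex interpolation. Fix $\varphi\in C_c^\infty((1/2,2))$ with $\sum_{j\geq 0}\varphi(2^{j}s)=1$ for $s\in(0,1)$, and write $S^\delta=\sum_{j\geq 0}M_j^\delta$, where $M_j^\delta=m_j(\mathcal{L})$ and $m_j(\lambda)=(1-\lambda)_+^\delta\varphi(2^{j}(1-\lambda))$. Each $m_j$ is supported in $\{\lambda:|1-\lambda|\sim 2^{-j}\}$ with $\|m_j\|_\infty\leq C\,2^{-j\delta}$, and the task reduces to producing a summable $L^p\to L^p$ bound on each dyadic piece $M_j^\delta$.

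For each such piece I would establish two endpoint bounds. The spectral theorem gives
\[
\|M_j^\delta\|_{L^2\to L^2}\leq C\,2^{-j\delta}.
\]
On the other hand, using the representation $M_j^\delta f=\int_0^1 m_j(\lambda)\,P_\lambda f\,d\lambda$ together with Theorem~\ref{mainCo}, and noting that $\lambda\sim 1$ on $\mathrm{supp}(m_j)$ while $|\mathrm{supp}(m_j)|\sim 2^{-j}$, one obtains for $1\leq p\leq \tfrac{2m+2}{m+3}$ the Stein--Tomas-type bound
\[
\|M_j^\delta\|_{L^p\to L^{p'}}\leq C\,2^{-j(\delta+1)}.
\]

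To convert these endpoint estimates into the desired diagonal bound, I would embed $m_j$ in a holomorphic family $\{m_j^{z}\}_{z\in\mathbb{C}}$ (with an exponential factor such as $e^{z^{2}}$ to ensure polynomial vertical growth) and apply Stein's complex interpolation between the $L^2\to L^2$ bound on $\operatorname{Re}(z)=-1/2+\varepsilon$ and the $L^{p}\to L^{p'}$ bound on $\operatorname{Re}(z)=-1+\varepsilon$. Exploiting the self-adjointness of each $M_j^\delta$ and its sharp spectral localisation (for instance via a $TT^{*}$-style factorisation $M_j^\delta=T_{j}T_{j}^{*}$ with $T_{j}$ built from a square root of $|m_j|$), one arrives at
\[
\|M_j^\delta\|_{L^p\to L^p}\leq C\,2^{-j(\delta-Q(1/p-1/2)+1/2)}.
\]
Summing over $j$ yields a geometric series that converges precisely when $\delta>Q(1/p-1/2)-1/2$, which is the assertion of the theorem.

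The main obstacle is this intermediate step. A straight Riesz--Thorin interpolation between $L^2\to L^2$ and $L^p\to L^{p'}$ cannot produce an $L^p\to L^p$ bound: the diagonal constraint $p_\theta=q_\theta$ forces $\theta=0$ for $p\neq 2$. One must therefore genuinely exploit the self-adjoint structure of $M_j^\delta$ together with its spectral localisation, either through the $TT^{*}$-factorisation mentioned above or through a carefully designed multi-line Stein interpolation that uses Theorem~\ref{kernel} (or Corollary~\ref{cor1}) as an auxiliary endpoint; this is where the technical heart of the proof lies.
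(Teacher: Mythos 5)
Your dyadic decomposition, the $L^2\to L^2$ bound $2^{-j\delta}$, the $L^p\to L^{p'}$ bound $2^{-j(\delta+1)}$ per piece, and the target diagonal estimate $\|M_j^\delta\|_{L^p\to L^p}\lesssim 2^{-j(\delta-Q(1/p-1/2)+1/2)}$ all match the paper (the paper incurs an additional $\varepsilon$-loss through a parameter $\gamma>0$). But the step you yourself flag as ``the technical heart'' is genuinely missing, and the devices you propose to fill it would not do so. The $TT^{*}$ identity $\|T_j^\delta f\|_2^2=\langle (T_j^\delta)^2f,f\rangle\le\|(T_j^\delta)^2f\|_{p'}\|f\|_p$ combined with Theorem~\ref{mainCo} yields only the off-diagonal bound $\|T_j^\delta\|_{L^p\to L^2}\lesssim 2^{-j(\delta+1/2)}$; it does not by itself return you to $L^p$ on the target side. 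An analytic-family (Stein) interpolation using Corollary~\ref{cor1} (an $L^1\to L^1$ bound requiring $\delta>Q+1$) as the auxiliary diagonal endpoint against the $L^2\to L^2$ bound gives roughly $\delta>(Q+1)\left(\frac{2}{p}-1\right)$, which is strictly weaker than the claimed threshold $Q\left(\frac{1}{p}-\frac{1}{2}\right)-\frac{1}{2}$. So neither route closes the argument.

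The paper's actual mechanism is not interpolation but spatial localization. Integrating the multiplier by parts $2N+2$ times against the kernels $R_\lambda^{2N+1}$ and invoking Theorem~\ref{kernel} shows that the kernel $s_j^\delta$ satisfies $|s_j^\delta(x,u)|\lesssim 2^{j(2N+2)}\bigl(1+\bigl|\bigl(A_{\mu/|\mu|}x,u\bigr)\bigr|\bigr)^{-2N}$, so its restriction to $\{|\omega|>2^{j(1+\gamma)}\}$ has $L^1$ norm $O(2^{-\varepsilon j})$ once $N$ is large relative to $1/\gamma$; this tail is disposed of by Young's inequality. The remaining piece of the kernel is supported in a ball of radius $2^{j(1+\gamma)}$, hence acts essentially locally: covering $\mathbb{G}$ by balls $B_j(\xi,\cdot)$, splitting $f$ into near, intermediate and far parts, and applying H\"older's inequality on a ball of measure $\sim 2^{j(1+\gamma)Q}$ converts the $L^p\to L^2$ bound into $\|f_1\ast s_j^1\|_{L^p}\lesssim 2^{-j(\delta+1/2)}2^{jQ(1+\gamma)(1/p-1/2)}\|f\|_p$ after averaging over $\xi$; choosing $\gamma$ small gives a geometrically summable series precisely when $\delta>Q\left(\frac{1}{p}-\frac{1}{2}\right)-\frac{1}{2}$. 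Without this localization step (or an equivalent substitute) your outline cannot be completed.
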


\begin{proof}
We take a partition of unity $\sum_{-\infty }^{\infty }\varphi (2^{j}s)=1$
where $\varphi \in C_{0}^{\infty }(\frac{1}{2},2)$ is a nonnegative
function, and write $\varphi _{j}^{\delta }(s)=(1-s)_{+}^{\delta }\varphi
(2^{j}(1-s))$ for each $j\geq 0$. Define
\begin{equation*}
T_{j}^{\delta }f=\int_{0}^{\infty }\int_{0}^{\infty }\varphi _{j}^{\alpha
}\left( \lambda \right) P_{\lambda }fd\lambda \text{.}
\end{equation*}%
It is obvious that%
\begin{equation*}
S^{\delta }=\sum_{j=0}^{\infty }T_{j}^{\delta }.
\end{equation*}%
Theorem \ref{T1} will be proved once we show that when $\delta >Q\left(
\frac{1}{p}-\frac{1}{2}\right) -\frac{1}{2}$, there exists an $\varepsilon
>0 $ such that for each $j\geq 0$,
\begin{equation}
\left\Vert T_{j}^{\delta }\right\Vert _{L^{p}\rightarrow L^{p}}\leq
2^{-\varepsilon j}.  \label{p1}
\end{equation}%
In order to prove (\ref{p1}), we define $B_{j}=\{\omega :\left\vert \omega
\right\vert \leq 2^{j(1+\gamma )}\}\subseteq \mathbb{G}$ and split the
kernel $s_{j}^{\delta }(\omega )$ of $T_{j}^{\delta }$ into two parts
\begin{equation*}
s_{j}^{\delta }(\omega )=s_{j}^{1}(\omega )+s_{j}^{2}(\omega ),
\end{equation*}%
where $s_{j}^{1}(\omega )=s_{j}^{\delta }\chi _{B_{j}}(\omega )$, $%
s_{j}^{2}(\omega )=s_{j}^{\delta }\chi _{B_{j}^{c}}(\omega )$. Here $\chi
_{A}$ stands for the characteristic function of set $A$ and $\gamma >0$ is
to be fixed. Clearly, (\ref{p1}) is the consequence of the estimates%
\begin{equation*}
\left\Vert f\ast s_{j}^{l}\right\Vert _{L^{p}}\leq C2^{-\varepsilon
j}\left\Vert f\right\Vert _{p},\text{ \ }l=1,2\text{.}
\end{equation*}

We first consider the convolution with $s_{j}^{2}(\omega )$. Set $R_{\lambda
}^{l}(\omega )$ to be the kernel of the Riesz means $\int_{0}^{\lambda }(1-%
\frac{t}{\lambda })^{l}P_{t}dt$. Then,
\begin{equation*}
\lambda \rightarrow R_{\lambda }^{0}(\omega )
\end{equation*}%
is a function of bounded variation, and the kernel of each $T_{j}^{\delta }$
can be written as
\begin{equation*}
s_{j}^{\delta }(\omega )=\int_{0}^{\infty }\varphi _{j}^{\delta }(\lambda )%
\frac{\partial }{\partial \lambda }R_{\lambda }^{0}(\omega )d\lambda.
\end{equation*}%
Integrating by parts and using the identity%
\begin{equation}
\frac{\partial }{\partial \lambda }(\lambda ^{N}R_{\lambda }^{N}(\omega
))=N\lambda ^{N-1}R_{\lambda }^{N-1}(\omega ),  \label{identity-0}
\end{equation}%
where $N$ is a positive integer, we get the relation
\begin{equation*}
s_{j}^{\delta }(\omega )=c_{N}\int \left( \partial _{\lambda }^{2N+2}\varphi
_{j}^{\delta }(\lambda )\right) \lambda ^{2N+1}R_{\lambda }^{2N+1}(\omega
)d\lambda .
\end{equation*}%
Theorem \ref{kernel} tells that%
\begin{equation}
\sup_{\lambda \in \lbrack 0,1]}\left\vert \lambda ^{2N+1}R_{\lambda
}^{2N+1}(\omega )\right\vert \leq \left( 1+\left\vert \left( A_{\frac{\mu }{%
\left\vert \mu \right\vert }}x,u\right) \right\vert \right) ^{-2N}
\label{es-0}
\end{equation}%
for any $\omega =(x,u)\in \mathbb{G}$. This estimate, together with the
bound
\begin{equation*}
\left\vert \partial _{t}^{2N+2}\varphi _{j}^{\delta }(\lambda )\right\vert
\leq 2^{j(2N+2)}
\end{equation*}%
imply that
\begin{equation*}
\left\vert s_{j}^{\delta }(\omega )\right\vert \leq C2^{j(2N+2)}\left(
1+\left\vert \left( A_{\frac{\mu }{\left\vert \mu \right\vert }}x,u\right)
\right\vert \right) ^{-2N}.
\end{equation*}%
By changing variables, it follows that
\begin{eqnarray*}
\int_{\mathbb{G}}\left\vert s_{j}^{2}(\omega )\right\vert d\omega
&=&\int_{\left\vert \omega \right\vert >2^{j(1+\gamma )}}\left\vert
s_{j}^{\delta }(\omega )\right\vert d\omega \\
&\leq &C2^{j(2N+2)}\int_{\left\vert (x,u)\right\vert \geq 2^{j(1+\gamma
)}}\left( 1+\left\vert \left( A_{\frac{\mu }{\left\vert \mu \right\vert }%
}x,u\right) \right\vert \right) ^{-2N}dxdu \\
&\leq &C2^{j(2N+2)}\left\vert \det A_{\frac{\mu }{\left\vert \mu \right\vert
}}\right\vert ^{-1}\int_{\left\vert \left( A_{\frac{\mu }{\left\vert \mu
\right\vert }}^{-1}z,u\right) \right\vert \geq 2^{j(1+\gamma )}}\left(
1+\left\vert \left( z,u\right) \right\vert \right) ^{-2N}dzdu.
\end{eqnarray*}%
Because there exists positive constant $K,H$ such that
\begin{equation*}
\sqrt{K}\leq \left\vert \det A_{\frac{\mu }{\left\vert \mu \right\vert }%
}\right\vert ^{-1}\leq \frac{1}{\sqrt{K}}\quad \text{and}\quad \frac{1}{H}%
\leq \left\vert A_{\frac{\mu }{\left\vert \mu \right\vert }
}^{-1}z\right\vert \leq H\left\vert z\right\vert
\end{equation*}%
for any $\mu\in \mathbb{R}^{m}\backslash \{0\}$, so we have that
\begin{eqnarray*}
\int_{\mathbb{G}}\left\vert s_{j}^{2}(\omega )\right\vert d\omega &\leq &C
2^{j(2N+2)}\int_{\left\vert (z,u)\right\vert \geq 2^{j(1+\gamma )}}\left(
1+\left\vert \left( z,u\right) \right\vert \right) ^{-2N}dzdu \\
&\leq &C2^{j(2N+2)}\int_{\left\vert \omega \right\vert \geq 2^{j(1+\gamma
)}}\left( 1+\left\vert \omega \right\vert \right) ^{-2N}d\omega \\
&\leq &C2^{j(2N+2)}2^{j(1+\gamma )(-2N+Q)}.
\end{eqnarray*}%
Choosing $N$ large enough such that
\begin{equation*}
2N\gamma >Q(1+\gamma )+2\text{,}
\end{equation*}%
it follows that
\begin{equation*}
\int_{\mathbb{G}}\left\vert s_{j}^{2}(\omega )\right\vert d\omega \leq
C2^{-\varepsilon j}
\end{equation*}%
for some $\varepsilon >0$. By Young's inequality, we conclude for any $1\leq
p\leq \infty $,
\begin{equation}
\left\Vert f\ast s_{j}^{2}\right\Vert _{p}\leq C2^{-\varepsilon j}\left\Vert
f\right\Vert _{p}.  \label{p2}
\end{equation}

Next, we consider the convolution with $s_{j}^{1}$. For any $\xi \in \mathbb{%
G}$ and $R>0$, we set $B_{j}(\xi ,R)=\{\omega:\left\vert \xi
^{-1}\omega \right\vert \leq R2^{j(1+\gamma )}\}\subseteq \mathbb{G}$ and split the function $f$
into three parts: $f=f_{1}+f_{2}+f_{3}$ where $f_{1}=f\chi _{B_{j}(\xi ,%
\frac{3}{4})}$, $f_{2}=f\chi _{B_{j}(\xi ,\frac{5}{4})\backslash B_{j}\left(
\xi ,\frac{3}{4}\right) }$ and $f_{3}=f\chi _{\mathbb{G}\backslash B_{j}(\xi
,\frac{5}{4})}.$ Assume that $|\xi ^{-1}\omega |\leq \frac{1}{4}%
2^{j(1+\gamma )}$. Since that $f_{3}$ is supported on $\mathbb{G}\backslash
B_{j}(\xi ,\frac{5}{4})$, then $f_{3}\neq 0$ leads to
\begin{equation*}
\left\vert \xi ^{-1}\cdot \omega \omega ^{\prime -1}\right\vert \geq \frac{5%
}{4}2^{j(1+\gamma )}.
\end{equation*}%
It follows that
\begin{equation*}
\left\vert \omega ^{\prime }\right\vert \geq 2^{j(1+\gamma )}.
\end{equation*}%
Note that $s_{j}^{1}$ is supported on $B_{j}$. Hence, $f_{3}\ast s_{j}^{1}=0$%
. Since $f_{2}$ is supported on $B_{j}(\xi ,\frac{5}{4})\backslash B_{j}(\xi
,\frac{3}{4})$, then $f_{2}\neq 0$ leads to
\begin{equation*}
\left\vert \omega ^{\prime }\right\vert >\frac{1}{2}2^{j(1+\gamma )}.
\end{equation*}%
Repeating the proof of (\ref{p2}), we can get that
\begin{equation}
\left\Vert f_{2}\ast s_{j}^{1}\right\Vert _{L^{p}(B_{j}(\xi ,\frac{1}{4}%
))}\leq C2^{-\varepsilon j}\left\Vert f_{2}\right\Vert _{L^{p}}\leq
C2^{-\varepsilon j}\left\Vert f\right\Vert _{L^{p}(B_{j}(\xi \,,\frac{5}{4}%
))}.  \label{F6}
\end{equation}%
Taking the $L^{p}$ norm with respect to $\xi $ on the both side of (\ref{F6}%
), it follows that
\begin{equation*}
\left( \int_{\mathbb{G}}\int_{B_{j}(\xi \,,\frac{5}{4})}|f_{2}\ast
s_{j}^{1}(\omega )|^{p}d\omega d\xi \right) ^{\frac{1}{p}}\leq
C2^{-\varepsilon j}\left( \int_{\mathbb{G}}\int_{B_{j}(\xi ,\frac{5}{4}%
)}|f(\omega )|^{p}d\omega d\xi \right) ^{\frac{1}{p}}.
\end{equation*}%
Changing the variable and exchanging the order of integration, the left side
\begin{eqnarray*}
\left( \int_{\mathbb{G}}\int_{B_{j}(\xi \,,\frac{5}{4})}|f_{2}\ast
s_{j}^{1}(\omega )|^{p}d\omega d\xi \right) ^{\frac{1}{p}} &=&\left( \int_{%
\mathbb{G}}\int_{\left\vert \omega \right\vert \leq \frac{1}{4}2^{j(1+\gamma
)}}\left\vert f_{2}\ast s_{j}^{1}(\xi \omega )\right\vert ^{p}d\omega d\xi
\right) ^{\frac{1}{p}} \\
&=&\left( \int_{\left\vert \omega \right\vert \leq \frac{1}{4}2^{j(1+\gamma
)}}\int_{\mathbb{G}}\left\vert f_{2}\ast s_{j}^{1}(\xi \omega )\right\vert
^{p}d\xi d\omega \right) ^{\frac{1}{p}} \\
&=&\left( \frac{1}{4}2^{j(1+\gamma )}\right) ^{\frac{Q}{p}}\left\Vert
f_{2}\ast s_{j}^{1}\right\Vert _{p}.
\end{eqnarray*}%
In the same way, the right side
\begin{equation*}
C2^{-\varepsilon j}\left( \int_{\mathbb{G}}\int_{B_{j}(\xi ,\frac{5}{4}%
)}|f(\omega )|^{p}d\omega d\xi \right) ^{\frac{1}{p}}=C2^{-\varepsilon
j}\left( \frac{5}{4}2^{j(1+\gamma )}\right) ^{\frac{Q}{p}}\left\Vert
f\right\Vert _{p}.
\end{equation*}%
Hence, we have that
\begin{equation}
\left\Vert f_{2}\ast s_{j}^{1}\right\Vert _{p}\leq C2^{-\varepsilon
j}\left\Vert f\right\Vert _{p}.  \label{F2K1}
\end{equation}%
Now it remains to estimate $f_{1}\ast s_{j}^{1}$. Since that $f_{1}\neq 0$
implies
\begin{equation*}
\left\vert \xi ^{-1}\cdot \omega \omega ^{\prime -1}\right\vert \leq \frac{3%
}{4}2^{j(1+\mathbb{\gamma })}\text{,}
\end{equation*}%
it follows that
\begin{equation*}
\left\vert \omega ^{\prime }\right\vert \leq 2^{j(1+\mathbb{\gamma })}\text{.%
}
\end{equation*}%
So, we have
\begin{equation}
f_{1}\ast s_{j}^{1}(\omega )=f_{1}\ast s_{j}^{\delta }(\omega )\quad \text{%
for any}\quad \omega \in B_{j}(\xi ,\frac{1}{4}).  \label{K1J1}
\end{equation}%
Note that
\begin{equation}
f\ast s_{j}^{\delta }=T_{j}^{\delta }f=\int_{1-2^{-j+1}}^{1-2^{-j-1}}\varphi
_{j}^{\delta }(\lambda )P_{\lambda }fd\lambda .  \label{k3}
\end{equation}%
Then,
\begin{eqnarray*}
\left\Vert f\ast s_{j}^{\delta }\right\Vert _{2}^{2} &=&\left\langle
T_{j}^{\delta }f,T_{j}^{\delta }f\right\rangle =\left\langle \left(
T_{j}^{\delta }\right) ^{\ast }T_{j}^{\delta }f,f\right\rangle \\
&=&\left\langle \left( T_{j}^{\delta }\right) ^{2}f,f\right\rangle \leq
\left\Vert \left( T_{j}^{\delta }\right) ^{2}f\right\Vert _{p^{\prime
}}\left\Vert f\right\Vert _{p}\text{.}
\end{eqnarray*}%
Since
\begin{equation*}
\left( T_{j}^{\delta }\right) ^{2}f=\int_{1-2^{-j+1}}^{1-2^{-j-1}}\left(
\varphi _{j}^{\delta }(\lambda )\right) ^{2}P_{\lambda }f\,d\lambda \,,
\end{equation*}%
using Theorem \ref{mainCo}, we get that
\begin{eqnarray*}
\Vert \left( T_{j}^{\delta }\right) ^{2}f\Vert _{p^{\prime }} &\leq
&\int_{1-2^{-j+1}}^{1-2^{-j-1}}\left( \varphi _{j}^{\delta }(\lambda
)\right) ^{2}\Vert P_{\lambda }f\Vert _{p^{\prime }}d\lambda \\
&\leq &C\int_{1-2^{-j+1}}^{1-2^{-j-1}}\left( \varphi _{j}^{\delta }(\lambda
)\right) ^{2}\lambda ^{Q(\frac{1}{p}-\frac{1}{2})-1}\Vert f\Vert _{p}d\lambda
\\
&\leq &C2^{-2j\delta -j}\Vert f\Vert _{p}.
\end{eqnarray*}%
So,
\begin{equation*}
\Vert f\ast s_{j}^{\delta }\Vert _{2}^{2}\leq C2^{-2j\delta -j}\Vert f\Vert
_{p}^{2}\text{.}
\end{equation*}%
This estimate, together with (\ref{K1J1}) and H\"{o}lder's inequality yield
that
\begin{eqnarray*}
\left\Vert f_{1}\ast s_{j}^{1}\right\Vert _{L^{p}(B_{j}(\xi ,\frac{1}{4}))}
&=&\left\Vert f_{1}\ast s_{j}^{\delta }\right\Vert _{L^{p}(B_{j}(\xi ,\frac{1%
}{4}))} \\
&\leq &2^{jQ(1+\gamma )(\frac{1}{p}-\frac{1}{2})}\Vert f_{1}\ast
s_{j}^{\delta }\Vert _{2} \\
&\leq &C2^{-j\delta }2^{-\frac{j}{2}}2^{jQ(1+\gamma )(\frac{1}{p}-\frac{1}{2}%
)}\left\Vert f_{1}\right\Vert _{p} \\
&\leq &C2^{-j\delta }2^{-\frac{j}{2}}2^{jQ(1+\gamma )(\frac{1}{p}-\frac{1}{2}%
)}\Vert f\Vert _{L^{p}(B_{j}(\xi ,\frac{3}{4}))}.
\end{eqnarray*}%
Taking the $L^{p}$ norm with respect to $\xi $, we have
\begin{equation*}
\left\Vert f_{1}\ast s_{j}^{1}\right\Vert _{L^{p}}\leq C2^{-j\delta }2^{-%
\frac{j}{2}}2^{jQ(1+\gamma )(\frac{1}{p}-\frac{1}{2})}\Vert f\Vert _{L^{p}}.
\end{equation*}%
Thus, when $\delta >Q\left( \frac{1}{p}-\frac{1}{2}\right) -\frac{1}{2}$, we
can choose $\gamma >0$ such that
\begin{equation*}
\delta >Q(1+\gamma )\left( \frac{1}{p}-\frac{1}{2}\right) -\frac{1}{2},
\end{equation*}%
which implies that there exists an $\varepsilon >0$ such that
\begin{equation*}
\left\Vert f_{1}\ast s_{j}^{1}\right\Vert _{L^{p}}\leq C2^{-\varepsilon
j}\Vert f\Vert _{L^{p}}.
\end{equation*}%
The proof of Theorem \ref{T1} is completed.
\end{proof}

\section{Boundedness of $S^{\protect\alpha }$ for $1\leq p_{1},p_{2}\leq 2$}

From this section, we begin to investigate the $L^{p_{1}}\times
L^{p_{2}}\rightarrow L^{p}$ boundedness of the bilinear Riesz means $%
S^{\alpha }$. We first consider the case of $1\leq p_{1},p_{2}\leq 2$.

\begin{lemma}
\label{restriction} Suppose $m\in L^{\infty }(\mathbb{R})$. Define operator $%
T_{m}f=\int_{a}^{b}m(\lambda )P_{\lambda }f\,d\lambda $ with $0\leq a<b$.
Then, for any $1\leq p\leq 2$, we have
\begin{equation*}
\left\Vert T_{m}f\right\Vert _{2}\leq C\left\Vert m\right\Vert _{\infty
}\left( (b-a)b^{n+m-1}\right) ^{\left( \frac{1}{p}-\frac{1}{2}\right)
}\left\Vert f\right\Vert _{p}.
\end{equation*}
\end{lemma}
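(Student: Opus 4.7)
The plan is to prove the lemma by establishing endpoint bounds at $p=1$ and $p=2$ and interpolating between them. At $p=2$, the operator $T_m = \chi_{[a,b]}(\mathcal{L})\, m(\mathcal{L})$ is a spectral multiplier for the self-adjoint sub-Laplacian, so the functional calculus (equivalently, the orthogonality of the spectral densities $P_\lambda$ coming from the decomposition $f = \int_0^\infty P_\lambda f\, d\lambda$) yields $\|T_m f\|_2 \leq \|m\|_\infty \|f\|_2$ directly, which matches the target bound since the factor $((b-a)b^{n+m-1})^{1/p-1/2}$ equals $1$ when $p=2$.

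For the endpoint $p=1$, I would use a $T_m^{\ast}T_m$ duality trick. Using the orthogonality of the $P_\lambda$ in the spectral integral, one formally has $T_m^{\ast}T_m f = \int_a^b |m(\lambda)|^2 P_\lambda f\, d\lambda$, so
\begin{equation*}
\|T_m f\|_2^2 = \langle T_m^{\ast}T_m f, f\rangle \leq \|T_m^{\ast}T_m f\|_\infty \, \|f\|_1.
\end{equation*}
Since each $P_\lambda$ is a convolution operator, the $L^1\to L^\infty$ norm is just the sup norm of the convolution kernel, and this is controlled by Theorem \ref{mainCo} applied at $p=1$ (which lies in the admissible range $[1,(2m+2)/(m+3)]$ because the center of $\mathfrak{g}$ is non-trivial, so $m\geq 1$). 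That theorem gives $\|P_\lambda f\|_\infty \leq C\lambda^{Q/2-1}\|f\|_1 = C\lambda^{n+m-1}\|f\|_1$, so integrating against $|m(\lambda)|^2$ over $[a,b]$ yields
\begin{equation*}
\|T_m^{\ast}T_m f\|_\infty \leq C\|m\|_\infty^2 \int_a^b \lambda^{n+m-1}\, d\lambda \cdot \|f\|_1 \leq C\|m\|_\infty^2 (b-a)\, b^{n+m-1}\|f\|_1,
\end{equation*}
which combines with the previous display to give $\|T_m f\|_2 \leq C\|m\|_\infty \bigl((b-a)b^{n+m-1}\bigr)^{1/2}\|f\|_1$.

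Finally, Riesz--Thorin interpolation between the $L^1\to L^2$ bound (with constant of order $\|m\|_\infty ((b-a)b^{n+m-1})^{1/2}$) and the $L^2\to L^2$ bound (with constant of order $\|m\|_\infty$) produces the stated estimate $\|T_m f\|_2 \leq C\|m\|_\infty \bigl((b-a)b^{n+m-1}\bigr)^{1/p-1/2} \|f\|_p$ for every $1\leq p\leq 2$, with the exponent $1/p - 1/2$ arising from linear interpolation of the endpoint exponents. The main conceptual point, and the only mildly delicate step, is the justification of the identity $T_m^{\ast}T_m = \int_a^b |m|^2 P_\lambda\, d\lambda$: this should be read through the spectral resolution established in Section~2, where $P_\lambda$ plays the role of the density of the spectral measure of $\mathcal{L}$; once that is in place the argument is a standard $TT^{\ast}$/Stein--Tomas type computation, with the restriction theorem of Casarino--Ciatti (Theorem \ref{mainCo}) doing the essential geometric work.
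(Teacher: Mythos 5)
Your proof is correct, and its overall architecture --- an $L^{1}\rightarrow L^{2}$ endpoint bound combined by interpolation with the trivial $L^{2}\rightarrow L^{2}$ bound $\left\Vert T_{m}f\right\Vert _{2}\leq \left\Vert m\right\Vert _{\infty }\left\Vert f\right\Vert _{2}$ --- is exactly the paper's. Where you differ is in how the $L^{1}\rightarrow L^{2}$ endpoint is obtained. The paper writes $T_{m}f=f\ast G_{m}$ for an explicit kernel $G_{m}$, computes $\left\Vert G_{m}\right\Vert _{2}^{2}$ directly via the Plancherel theorem in the central variable and the orthogonality of the Laguerre functions $\varphi _{k}^{\left\vert \mu \right\vert }$ (using $\left\Vert \varphi _{k}^{\left\vert \mu \right\vert }\right\Vert _{2}\leq C\left\vert \mu \right\vert ^{-n/2}k^{(n-1)/2}$), and concludes by Young's inequality $\left\Vert f\ast G_{m}\right\Vert _{2}\leq \left\Vert G_{m}\right\Vert _{2}\left\Vert f\right\Vert _{1}$. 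You instead run a $T_{m}^{\ast }T_{m}$ argument and invoke the Casarino--Ciatti restriction estimate (Theorem \ref{mainCo}) at $p=1$, which is admissible since the centre dimension is at least $1$, so $1\leq \frac{2m+2}{m+3}$, and which gives precisely the exponent $Q/2-1=n+m-1$ you need. The two are morally the same computation --- $\left\Vert G_{m}\right\Vert _{2}^{2}$ is the kernel of $T_{m}^{\ast }T_{m}$ evaluated at the identity, while you bound its supremum --- but yours outsources the orthogonality work to the restriction theorem, making the proof shorter at the cost of relying on the identity $T_{m}^{\ast }T_{m}=\int_{a}^{b}\left\vert m(\lambda )\right\vert ^{2}P_{\lambda }\,d\lambda $. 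That identity is legitimate here: the paper itself uses the analogous $\left( T_{j}^{\delta }\right) ^{\ast }T_{j}^{\delta }=\left( T_{j}^{\delta }\right) ^{2}=\int \left( \varphi _{j}^{\delta }(\lambda )\right) ^{2}P_{\lambda }\,d\lambda $ in the proof of Theorem \ref{T1}, and it is rigorously backed by the same Laguerre orthogonality that the paper manipulates explicitly. Both routes yield the same endpoint constant $C\left\Vert m\right\Vert _{\infty }\left( (b-a)b^{n+m-1}\right) ^{1/2}$ and hence, after Riesz--Thorin, the same final estimate.
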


\begin{proof}
Since that
\begin{equation*}
P_{\lambda }f=\sum_{k=0}^{\infty }\frac{\lambda ^{n+m-1}}{\left( 2\pi
(2k+n)\right) ^{n+m}}\int_{\mathbb{S}^{m-1}}f\ast \widetilde{e}_{k}^{\lambda
\eta }(x,u)d\sigma (\eta ),
\end{equation*}%
the operator $T_{m}$ can be written as
\begin{equation*}
T_{m}f=\int_{a}^{b}m(\lambda )P_{\lambda }f\,d\lambda =f\ast G_{m},
\end{equation*}%
where the kernel $G_{m}$ is given by
\begin{eqnarray*}
&&G_{m}(x,u) \\
&=&\int_{a}^{b}m(\lambda )\sum_{k=0}^{\infty }\frac{\lambda ^{n+m-1}}{\left(
2\pi (2k+n)\right) ^{n+m}}\int_{\mathbb{S}^{m-1}}\widetilde{e}_{k}^{\lambda
\eta }(x,u)d\sigma (\eta )d\lambda \\
&=&\frac{1}{(2\pi )^{n+m}}\sum_{k=0}^{\infty }\int_{\frac{a}{2k+n}}^{\frac{b%
}{2k+n}}m((2k+n)\lambda )\int_{\mathbb{S}^{m-1}}e_{k}^{\lambda \eta
}(x,u)d\sigma (\eta )\lambda ^{n+m-1}d\lambda \\
&=&\frac{1}{(2\pi )^{n+m}}\sum_{k=0}^{\infty }\int_{\frac{a}{2k+n}}^{\frac{b%
}{2k+n}}m((2k+n)\lambda )\int_{\mathbb{S}^{m-1}}e^{-i\lambda \eta (u)}\left(
\varphi _{k}^{\lambda }\circ A_{\eta }\right) (x)\left\vert \det A_{\eta
}\right\vert d\sigma (\eta )\lambda ^{n+m-1}d\lambda \\
&=&\frac{1}{(2\pi )^{n+m}}\sum_{k=0}^{\infty }\int_{\mathbb{R}^{m}}\chi
_{\lbrack \frac{a}{2k+n},\frac{b}{2k+n}]}(\left\vert \mu \right\vert
)m((2k+n)\left\vert \mu \right\vert )e^{-i\mu (u)}\left( \varphi
_{k}^{\left\vert \mu \right\vert }\circ A_{\frac{\mu }{\left\vert \mu
\right\vert }}\right) (x)\left\vert \det A_{\frac{\mu }{\left\vert \mu
\right\vert }}\right\vert \left\vert \mu \right\vert ^{n}d\mu .
\end{eqnarray*}%
Applying the Plancherel theorem in the variable $u$ and the orthogonality of
$\varphi _{k}^{\left\vert \mu \right\vert }$, we obtain that
\begin{eqnarray*}
&&\left\Vert G_{m}\right\Vert _{2}^{2} \\
&=&C\int_{\mathbb{R}^{2n}}\int_{\mathbb{R}^{m}}\left\vert \int_{\mathbb{R}%
^{m}}e^{-i\mu (u)}\sum_{k=0}^{\infty }\chi _{\lbrack \frac{a}{2k+n},\frac{b}{%
2k+n}]}(\left\vert \mu \right\vert )m((2k+n)\left\vert \mu \right\vert
)\left( \varphi _{k}^{\left\vert \mu \right\vert }\circ A_{\frac{\mu }{%
\left\vert \mu \right\vert }}\right) (x)\left\vert \det A_{\frac{\mu }{%
\left\vert \mu \right\vert }}\right\vert \left\vert \mu \right\vert ^{n}d\mu
\right\vert ^{2}\,dudx \\
&=&C\int_{\mathbb{R}^{2n}}\int_{\mathbb{R}^{m}}\left\vert \sum_{k=0}^{\infty
}\chi _{\lbrack \frac{a}{2k+n},\frac{b}{2k+n}]}(\left\vert \mu \right\vert
)m((2k+n)\left\vert \mu \right\vert )\left( \varphi _{k}^{\left\vert \mu
\right\vert }\circ A_{\frac{\mu }{\left\vert \mu \right\vert }}\right)
(x)\right\vert ^{2}\left\vert \det A_{\frac{\mu }{\left\vert \mu \right\vert
}}\right\vert ^{2}\,\left\vert \mu \right\vert ^{2n}d\mu dx \\
&\leq &C\int_{\mathbb{R}^{m}}\sum_{k=0}^{\infty }\left\vert \chi _{\lbrack
\frac{a}{2k+n},\frac{b}{2k+n}]}(\left\vert \mu \right\vert
)m((2k+n)\left\vert \mu \right\vert )\right\vert ^{2}\left\Vert \varphi
_{k}^{\left\vert \mu \right\vert }\circ A_{\frac{\mu }{\left\vert \mu
\right\vert }}\right\Vert _{2}^{2}\left\vert \det A_{\frac{\mu }{\left\vert
\mu \right\vert }}\right\vert ^{2}|\mu |^{2n}d\mu.
\end{eqnarray*}%
Changing variables in the integral and using the estimate
\begin{equation}
\left\Vert \varphi _{k}^{\left\vert \mu \right\vert }\right\Vert _{2}\leq
C\left\vert \mu \right\vert ^{-\frac{n}{2}}k^{\frac{n-1}{2}},  \label{varphi}
\end{equation}%
we get that
\begin{equation*}
\left\Vert \varphi _{k}^{\left\vert \mu \right\vert }\circ A_{\frac{\mu }{%
\left\vert \mu \right\vert }}\right\Vert _{2}^{2}=\int_{%
%TCIMACRO{\U{211d} }%
%BeginExpansion
\mathbb{R}
%EndExpansion
^{2n}}\left\vert \varphi _{k}^{\left\vert \mu \right\vert }\left( A_{\frac{%
\mu }{\left\vert \mu \right\vert }}x\right) \right\vert ^{2}dx=\left\vert
\det A_{\frac{\mu }{\left\vert \mu \right\vert }}\right\vert ^{-1}\left\Vert
\varphi _{k}^{\left\vert \mu \right\vert }\right\Vert _{2}^{2}\leq
C\left\vert \det A_{\frac{\mu }{\left\vert \mu \right\vert }}\right\vert
^{-1}\left\vert \mu \right\vert ^{-n}k^{n-1}.
\end{equation*}%
Thus,
\begin{eqnarray*}
\left\Vert G_{m}\right\Vert _{2}^{2} &\leq &C\int_{\mathbb{R}%
^{m}}\sum_{k=0}^{\infty }\left\vert \chi _{\lbrack \frac{a}{2k+n},\frac{b}{%
2k+n}]}(\left\vert \mu \right\vert )m((2k+n)\left\vert \mu \right\vert
)\right\vert ^{2}\left\Vert \varphi _{k}^{\left\vert \mu \right\vert }\circ
A_{\frac{\mu }{\left\vert \mu \right\vert }}\right\Vert _{2}^{2}\left\vert
\det A_{\frac{\mu }{\left\vert \mu \right\vert }}\right\vert ^{2}|\mu
|^{2n}d\mu \\
&\leq &C\int_{\mathbb{R}^{m}}\sum_{k=0}^{\infty }k^{n-1}\left\vert \chi
_{\lbrack \frac{a}{2k+n},\frac{b}{2k+n}]}(\left\vert \mu \right\vert
)m((2k+n)\left\vert \mu \right\vert )\right\vert ^{2}\left\vert \det A_{%
\frac{\mu }{\left\vert \mu \right\vert }}\right\vert \left\vert \mu
\right\vert ^{n}d\mu \\
&=&C\left( \sum_{k=0}^{\infty }(2k+n)^{-n-m}k^{n-1}\right) \int_{a\leq
\left\vert \mu \right\vert \leq b}\left\vert m(\left\vert \mu \right\vert
)\right\vert ^{2}\left\vert \det A_{\frac{\mu }{\left\vert \mu \right\vert }%
}\right\vert \left\vert \mu \right\vert ^{n}d\mu\\
&\leq &C(b-a)b^{n+m-1}\left\Vert m\right\Vert _{\infty }.
\end{eqnarray*}%
Using Young's inequality, it follows that
\begin{equation*}
\left\Vert T_{m}f\right\Vert _{2}\leq \left\Vert G_{m}\right\Vert
_{2}\left\Vert f\right\Vert _{1}\leq C\left( (b-a)b^{n+m-1}\right) ^{\frac{1%
}{2}}\left\Vert m\right\Vert _{\infty }\left\Vert f\right\Vert _{1}\text{.}
\end{equation*}%
By interpolation with the trivial estimate
\begin{equation*}
\left\Vert T_{m}f\right\Vert _{2}\leq \left\Vert m\right\Vert _{\infty
}\left\Vert f\right\Vert _{2},
\end{equation*}%
we conclude that
\begin{equation*}
\left\Vert T_{m}f\right\Vert _{2}\leq C\left( (b-a)b^{n+m-1}\right) ^{\left(
\frac{1}{p}-\frac{1}{2}\right) }\left\Vert m\right\Vert _{\infty }\left\Vert
f\right\Vert _{p}.
\end{equation*}%
The proof is completed.
\end{proof}

\begin{theorem}
\label{mainTh} Suppose that $1\leq p_{1},p_{2}\leq 2$ and $%
1/p=1/p_{1}+1/p_{2}$. If $\alpha >Q\left( \frac{1}{p}-1\right) $, then $%
S^{\alpha }$ is bounded from $L^{p_{1}}(\mathbb{G})\times L^{p_{2}}(\mathbb{G%
})$ into $L^{p}(\mathbb{G})$.
\end{theorem}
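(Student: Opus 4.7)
The plan is to mimic the dyadic decomposition used for the linear case in Theorem \ref{T1}. Choose $\varphi\in C_{0}^{\infty}(1/2,2)$ satisfying $\sum_{j}\varphi(2^{j}s)=1$ for $s>0$, set
\[
\varphi_{j}^{\alpha}(\lambda_{1},\lambda_{2})=(1-\lambda_{1}-\lambda_{2})_{+}^{\alpha}\varphi(2^{j}(1-\lambda_{1}-\lambda_{2})),
\]
\[
T_{j}^{\alpha}(f,g)=\int_{0}^{\infty}\int_{0}^{\infty}\varphi_{j}^{\alpha}(\lambda_{1},\lambda_{2})P_{\lambda_{1}}f\cdot P_{\lambda_{2}}g\,d\lambda_{1}d\lambda_{2},
\]
so that $S^{\alpha}=\sum_{j\ge 0}T_{j}^{\alpha}$. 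It then suffices to prove
\[
\|T_{j}^{\alpha}(f,g)\|_{L^{p}}\leq C 2^{-\varepsilon j}\|f\|_{L^{p_{1}}}\|g\|_{L^{p_{2}}}
\]
for some $\varepsilon>0$ whenever $\alpha>Q(1/p-1)$, and then sum in $j$.

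Following the two-ingredient template of Theorem \ref{T1}, split the bilinear kernel $K_{j}^{\alpha}$ of $T_{j}^{\alpha}$ as $K_{j}^{\alpha,1}+K_{j}^{\alpha,2}$, where $K_{j}^{\alpha,1}$ is $K_{j}^{\alpha}$ restricted to $\{(\omega_{1},\omega_{2}):|\omega_{1}|,|\omega_{2}|\le 2^{j(1+\gamma)}\}$ for a small $\gamma>0$ to be chosen. For the tail $K_{j}^{\alpha,2}$, I would iterate the identity \eqref{identity-0} in both spectral variables $\lambda_{1},\lambda_{2}$ (losing $2^{j(2N+2)}$ from each variable) and invoke the pointwise decay from Theorem \ref{bilinear-kernel} with $N$ large, producing a bound that is $L^{1}(\mathbb{G}\times\mathbb{G})$-integrable of size $C 2^{-\varepsilon j}$ once $N$ is sufficiently large relative to $Q/\gamma$; Young's inequality then yields the required bound for this piece for every $1\le p_{1},p_{2}\le\infty$.

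For the main piece $K_{j}^{\alpha,1}$, localise both inputs. For $\xi\in\mathbb{G}$, decompose $f=f_{1}+f_{2}+f_{3}$ and $g=g_{1}+g_{2}+g_{3}$ using the dilated balls $B_{j}(\xi,3/4)$ and $B_{j}(\xi,5/4)\setminus B_{j}(\xi,3/4)$, and test the output on $\omega\in B_{j}(\xi,1/4)$. The mixed terms involving $f_{3}$ or $g_{3}$ vanish because $K_{j}^{\alpha,1}$ is supported in a ball of radius $2^{j(1+\gamma)}$, while the mixed terms involving $f_{2}$ or $g_{2}$ (but not $f_{3}$ or $g_{3}$) are controlled by the same kernel-decay argument as for $K_{j}^{\alpha,2}$. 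The crux is the local estimate on $T_{j}^{\alpha}(f_{1},g_{1})$: H\"{o}lder's inequality on the ball of volume $\sim 2^{jQ(1+\gamma)}$ reduces matters to a bilinear spectral bound of the form
\[
\|T_{j}^{\alpha}(f,g)\|_{L^{q}(\mathbb{G})}\leq C 2^{-j\alpha}\|f\|_{L^{p_{1}}}\|g\|_{L^{p_{2}}}
\]
for a suitable $q\ge 1$, which I would prove by iterated application of Lemma \ref{restriction}: freeze $\lambda_{2}$ and bound $\|\int\varphi_{j}^{\alpha}(\lambda_{1},\lambda_{2})P_{\lambda_{1}}f\,d\lambda_{1}\|_{L^{2}}$ using the $\lambda_{1}$-support width $\sim 2^{-j}$ and $\|\varphi_{j}^{\alpha}(\cdot,\lambda_{2})\|_{\infty}\lesssim 2^{-j\alpha}$, then combine with $P_{\lambda_{2}}g$ via a second application of Lemma \ref{restriction} in $\lambda_{2}$ (or a Cauchy-Schwarz decoupling at the spectral level). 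Taking the $L^{p}$-norm in $\xi$ recovers the global bound, and the effective exponent $-j\alpha+jQ(1+\gamma)(1/p-1)$ becomes strictly negative for small enough $\gamma>0$ exactly when $\alpha>Q(1/p-1)$.

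The main obstacle is the bilinear spectral estimate itself. In the linear argument of Theorem \ref{T1}, the $L^{2}$-bound for $T_{j}^{\delta}$ followed from the self-adjoint identity $\|T_{j}^{\delta}f\|_{2}^{2}=\langle(T_{j}^{\delta})^{2}f,f\rangle$ combined with Theorem \ref{mainCo}; this trick is unavailable in the bilinear setting because $T_{j}^{\alpha}$ has no natural adjoint. One must instead iterate Lemma \ref{restriction} in the two spectral variables, being careful that the support of $\varphi_{j}^{\alpha}$ is a strip of area $\sim 2^{-j}$ in the plane $\lambda_{1}+\lambda_{2}=1$, which provides only a single factor of $2^{-j}$ overall rather than one per variable. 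A secondary technical complication is that the distorted decay $(1+|(A_{\mu/|\mu|}x,u)|)^{-2N}$ from Theorem \ref{bilinear-kernel} depends on the spectral direction $\mu/|\mu|$; as in Corollary \ref{cor2}, one invokes the uniform bound \eqref{unitsphere} together with the change of variables $z_{i}=A_{\mu_{i}/|\mu_{i}|}x_{i}$ to convert the distorted decay into ordinary homogeneous-norm decay when computing the relevant $L^{1}$-integrals.
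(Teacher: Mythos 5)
Your overall architecture (dyadic decomposition in $j$, near/far kernel splitting at scale $2^{j(1+\gamma)}$, localization of $f,g$ on dilated balls, H\"older on the ball, and a restriction-type $L^{2}$ estimate for the main term) matches the paper's, but two steps as you describe them would not go through. First, you cannot handle the mixed regions (say $|\omega_{1}|\geq 2^{j(1+\gamma)}$ but $|\omega_{2}|\leq 2^{j(1+\gamma)}$) by pure kernel decay plus Young's inequality. The pointwise bound (\ref{Kj-kernel}) carries the loss $C_{N}2^{-j(\alpha-4N+1)}$, and integrating the decay factor over the far region in $\omega_{1}$ gains only $2^{j(1+\gamma)(Q-2N)}$ while the $\omega_{2}$-integral over the near region gains nothing; the net exponent $2N(1-\gamma)+Q(1+\gamma)-1-\alpha$ is positive for every large $N$ once $\gamma<1$, and you must take $\gamma$ small to land on the threshold $\alpha>Q(1/p-1)$. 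The paper's treatment of $T_{j}^{2},T_{j}^{3}$ is genuinely asymmetric: it integrates by parts via (\ref{identity-0}) in only one spectral variable (loss $2^{j(2N+2)}$, not $2^{j\cdot 4N}$), uses the kernel decay only in the far variable, and controls the near variable by Cauchy--Schwarz over the ball combined with Lemma \ref{restriction}.

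Second, and more fundamentally, the bilinear spectral estimate you correctly identify as the crux is not actually supplied by ``iterated application of Lemma \ref{restriction}.'' That lemma applies to operators $\int m(\lambda)P_{\lambda}f\,d\lambda$ with a scalar multiplier; since $\varphi_{j}^{\alpha}(\lambda_{1},\lambda_{2})$ does not factor, freezing $\lambda_{2}$ leaves you with $\int\bigl(\int\varphi_{j}^{\alpha}(\lambda_{1},\lambda_{2})P_{\lambda_{1}}f\,d\lambda_{1}\bigr)P_{\lambda_{2}}g\,d\lambda_{2}$, and there is no bound on $P_{\lambda_{2}}g$ for a single frozen $\lambda_{2}$ that you can integrate in $\lambda_{2}$ to close the argument at the stated exponent ($P_{\lambda}$ is a spectral density, not an operator bounded on $L^{p_2}$ pointwise in $\lambda$ in the required sense). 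The paper's mechanism is to separate the variables first: expand $t\mapsto\varphi_{j}^{\alpha}(|s|,|t|)$ in a Fourier series, so that $T_{j}^{\alpha}(f,g)$ becomes $\sum_{k}\bigl(\int\gamma_{j,k}^{\alpha}(\lambda_{1})P_{|\lambda_{1}|}f\,d\lambda_{1}\bigr)\bigl(\int e^{i\pi k\lambda_{2}}P_{|\lambda_{2}|}g\,d\lambda_{2}\bigr)$; each factor is a genuine one-variable multiplier operator to which Lemma \ref{restriction} applies, Cauchy--Schwarz gives the $L^{2}\times L^{2}\to L^{1}$ pairing, and the decay $\sup_{s}|\gamma_{j,k}^{\alpha}(s)|(1+|k|)^{1+\delta}\leq C2^{-j(\alpha-\delta)}$ of the Fourier coefficients produces both the summability in $k$ and the $2^{-j(\alpha-\delta)}$ factor in (\ref{ff}). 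Without this (or an equivalent variable-separation device), the estimate for $T_{j}^{1}(f_{1},g_{1})$, and hence the theorem at the claimed threshold, is not established.
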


\begin{proof}
In the proof of Theorem \ref{bilinear-kernel}, we have defined the operator
\begin{equation*}
T_{j}^{\alpha }(f,g)=\int_{0}^{\infty }\int_{0}^{\infty }\varphi
_{j}^{\alpha }\left( \lambda _{1},\lambda _{2}\right) P_{\lambda
_{1}}fP_{\lambda _{2}}gd\lambda _{1}d\lambda _{2},
\end{equation*}%
and showed that%
\begin{equation*}
S^{\alpha }=\sum_{j=0}^{\infty }T_{j}^{\alpha }.
\end{equation*}%
So, Theorem \ref{mainTh} would follow if we can show that when $\alpha
>Q\left( \frac{1}{p}-1\right) $, there exists an $\varepsilon >0$ such that
for each $j\geq 0$,
\begin{equation}
\left\Vert T_{j}^{\alpha }\right\Vert _{L^{p_{1}}\times L^{p_{2}}\rightarrow
L^{p}}\leq 2^{-\varepsilon j}.  \label{TJ-}
\end{equation}%
Fixing $j\geq 0$. To prove (\ref{TJ-}), we define $B_{j}=\{\omega
:\left\vert \omega \right\vert \leq 2^{j(1+\gamma )}\}\subseteq \mathbb{G}$
and split the kernel of $T_{j}^{\alpha }$, denoted by $K_{j}^{\alpha }$,
into four parts:
\begin{equation*}
K_{j}^{\alpha }=K_{j}^{1}+K_{j}^{2}+K_{j}^{3}+K_{j}^{4},
\end{equation*}%
where
\begin{eqnarray*}
K_{j}^{1}(\omega _{1},\omega _{2}) &=&K_{j}^{\alpha }(\omega _{1},\omega
_{2})\chi _{B_{j}}(\omega _{1})\chi _{B_{j}}(\omega _{2}), \\
K_{j}^{2}(\omega _{1},\omega _{2}) &=&K_{j}^{\alpha }(\omega _{1},\omega
_{2})\chi _{B_{j}}(\omega _{1})\chi _{B_{j}^{c}}(\omega _{2}), \\
K_{j}^{3}(\omega _{1},\omega _{2}) &=&K_{j}^{\alpha }(\omega _{1},\omega
_{2})\chi _{B_{j}^{c}}(\omega _{1})\chi _{B_{j}}(\omega _{2}), \\
K_{j}^{4}(\omega _{1},\omega _{2}) &=&K_{j}^{\alpha }(\omega _{1},\omega
_{2})\chi _{B_{j}^{c}}(\omega _{1})\chi _{B_{j}^{c}}(\omega _{2}).
\end{eqnarray*}%
Let $T_{j}^{l}$ be the bilinear operator with kernel $K_{j}^{l}$, $l=1,2,3,4$%
. Then, (\ref{TJ-}) would be the consequence of the estimates
\begin{equation*}
\left\Vert T_{j}^{l}\right\Vert _{L^{p_{1}}\times L^{p_{2}}\rightarrow
L^{p}}\leq 2^{-\varepsilon j},\text{ \ \ \ }l=1,2,3,4.
\end{equation*}%
First consider $T_{j}^{4}$. (\ref{Kj-kernel}) tells that for any $N\in
%TCIMACRO{\U{2115} }%
%BeginExpansion
\mathbb{N}
%EndExpansion
^{+}$ and $\omega _{1}=(x_{1},u_{1})$, $\omega _{2}=(x_{2},u_{2})\in \mathbb{%
G}$,
\begin{equation*}
\left\vert K_{j}^{\alpha }(\left( x_{1},u_{1}\right)
,(x_{2},u_{2}))\right\vert \leq C_{N}2^{-j(\alpha -4N+1)}\left( 1+\left\vert
\left( A_{\frac{\mu _{1}}{\left\vert \mu _{1}\right\vert }%
}x_{1},u_{1}\right) \right\vert \right) ^{-2N}\left( 1+\left\vert \left( A_{%
\frac{\mu _{2}}{\left\vert \mu _{2}\right\vert }}x_{2},u_{2}\right)
\right\vert \right) ^{-2N}.
\end{equation*}%
Applying H\"{o}lder's inequality, Young's inequality and changing variables,
we can easily get that
\begin{eqnarray*}
&&\left\Vert T_{j}^{4}(f,g)\right\Vert _{p} \\
&\leq &C_{N}2^{j\cdot 4N}\left\Vert f\right\Vert _{p_{1}}\left\Vert
g\right\Vert _{p_{2}}\left( \int_{\left\vert (x_{1},u_{1})\right\vert \geq
2^{j(1+\gamma )}}\left( 1+\left\vert \left( A_{\frac{\mu _{1}}{\left\vert
\mu _{1}\right\vert }}x_{1},u_{1}\right) \right\vert \right)
^{-2N}dx_{1}du_{1}\right) \\
&&\times \left( \int_{\left\vert (x_{2},u_{2})\right\vert \geq 2^{j(1+\gamma
)}}\left( 1+\left\vert \left( A_{\frac{\mu _{2}}{\left\vert \mu
_{2}\right\vert }}x_{2},u_{2}\right) \right\vert \right)
^{-2N}dx_{2}du_{2}\right) \\
&\leq &C_{N}2^{j\cdot 4N}\left\Vert f\right\Vert _{p_{1}}\left\Vert
g\right\Vert _{p_{2}}\left( \int_{\left\vert \left( A_{\frac{\mu _{1}}{%
\left\vert \mu _{1}\right\vert }}^{-1}z_{1},u_{1}\right) \right\vert \geq
2^{j(1+\gamma )}}\left( 1+\left\vert \left( z_{1},u_{1}\right) \right\vert
\right) ^{-2N}dz_{1}du_{1}\right) \\
&&\times \left( \int_{\left\vert \left( A_{\frac{\mu _{2}}{\left\vert \mu
_{2}\right\vert }}^{-1}z_{2},u_{2}\right) \right\vert \geq 2^{j(1+\gamma
)}}\left( 1+\left\vert \left( z_{2},u_{2}\right) \right\vert \right)
^{-2N}dz_{2}du_{2}\right) \left\vert \det A_{\frac{\mu _{1}}{\left\vert \mu
_{1}\right\vert }}\right\vert ^{-1}\left\vert \det A_{\frac{\mu _{2}}{%
\left\vert \mu _{2}\right\vert }}\right\vert ^{-1} \\
&\leq &C_{N}2^{j\cdot 4N}\left\Vert f\right\Vert _{p_{1}}\left\Vert
g\right\Vert _{p_{2}}\int_{\left\vert \left( z_{1},u_{1}\right) \right\vert
\geq 2^{j(1+\gamma )}}\left( 1+\left\vert \left( z_{1},u_{1}\right)
\right\vert \right) ^{-2N}dz_{1}du_{1} \\
&&\times \int_{\left\vert \left( z_{2},u_{2}\right) \right\vert \geq
2^{j(1+\gamma )}}\left( 1+\left\vert \left( z_{2},u_{2}\right) \right\vert
\right) ^{-2N}dz_{2}du_{2} \\
&\leq &C_{N}2^{j\cdot 4N}\left\Vert f\right\Vert _{p_{1}}\left\Vert
g\right\Vert _{p_{2}}\left( \int_{\left\vert \omega _{1}\right\vert \geq
2^{j(1+\gamma )}}\left( 1+\left\vert \omega _{1}\right\vert \right)
^{-2N}d\omega _{1}\right) \left( \int_{\left\vert \omega _{2}\right\vert
\geq 2^{j(1+\gamma )}}\left( 1+\left\vert \omega _{2}\right\vert \right)
^{-2N}d\omega _{2}\right) \\
&\leq &C_{N}2^{j\cdot 4N}\left\Vert f\right\Vert _{p_{1}}\left\Vert
g\right\Vert _{p_{2}}\left( \int_{2^{j(1+\gamma )}}^{\infty
}t^{-2N+Q-1}dt\right) ^{2} \\
&\leq &C_{N}2^{j\cdot 4N}2^{j(1+\gamma )(-4N+2Q)}\left\Vert f\right\Vert
_{p_{1}}\left\Vert g\right\Vert _{p_{2}}.
\end{eqnarray*}%
Choosing $N$ large enough such that%
\begin{equation*}
4N\mathbb{\gamma }>2Q(1+\mathbb{\gamma }),
\end{equation*}%
we have
\begin{equation}
\left\Vert T_{j}^{4}\right\Vert _{L^{p_{1}}\times L^{p_{2}}\rightarrow
L^{p}}\leq 2^{-\varepsilon j}  \label{TJ4}
\end{equation}%
for some $\varepsilon >0$.

Consider the estimate of $T_{j}^{3}$. As the proof of Theorem \ref{T1}, the
kernel $K_{j}^{\alpha }$ can be rewritten as%
\begin{equation*}
K_{j}^{\alpha }(\omega _{1},\omega _{2})=c_{N}\int_{0}^{\infty
}\int_{0}^{\infty }\varphi _{j}^{\alpha }(\lambda _{1},\lambda _{2})\frac{%
\partial }{\partial \lambda _{1}}R_{\lambda _{1}}^{0}(\omega _{1})G_{\lambda
_{2}}(\omega _{2})d\lambda _{1}d\lambda _{2},
\end{equation*}%
where%
\begin{equation*}
G_{\lambda _{2}}(\omega _{2})=G_{\lambda _{2}}(x,u)=\sum_{k=0}^{\infty }%
\frac{\lambda _{2}^{n+m-1}}{(2\pi (2k+n))^{n+m}}\int_{\mathbb{S}^{m-1}}%
\widetilde{e}_{k}^{\lambda _{2}\eta _{2}}(x,u)\,d\sigma (\eta _{2}).
\end{equation*}%
Intergration by parts and using the identity (\ref{identity-0}), we get that
\begin{equation*}
K_{j}^{\alpha }(\omega _{1},\omega _{2})=c_{N}\int_{0}^{1}\int_{0}^{1}\left(
\partial _{\lambda _{1}}^{2N+2}\varphi _{j}^{\alpha }(\lambda _{1},\lambda
_{2})\right) \lambda _{1}^{2N+1}R_{\lambda _{1}}^{2N+1}(\omega
_{1})G_{\lambda _{2}}(\omega _{2})d\lambda _{1}d\lambda _{2}.
\end{equation*}%
So,
\begin{eqnarray*}
&&\left\vert K_{j}^{3}(\omega _{1},\omega _{2})\right\vert  \\
&=&\left\vert K_{j}^{\alpha }(\omega _{1},\omega _{2})\chi
_{B_{j}^{c}}(\omega _{1})\chi _{B_{j}}(\omega _{2})\right\vert  \\
&\leq &C\int_{0}^{1}\left\vert \lambda _{1}^{2N+1}R_{\lambda
_{1}}^{2N+1}(\omega _{1})\chi _{B_{j}^{c}}(\omega _{1})\right\vert
\left\vert \int_{0}^{1}\left( \partial _{\lambda _{2}}^{2N+2}\varphi
_{j}^{\alpha }(\lambda _{1},\lambda _{2})\right) G_{\lambda _{2}}(\omega
_{2})\chi _{B_{j}}(\omega _{2})\,d\lambda _{2}\right\vert d\lambda _{1} \\
&\leq &C\sup_{\lambda _{1}\in \lbrack 0,1]}\left\vert \lambda
_{1}^{2N+1}R_{\lambda _{1}}^{2N+1}(\omega _{1})\chi _{B_{j}^{c}}(\omega
_{1})\right\vert \int_{0}^{1}\left\vert \int_{0}^{1}\left( \partial
_{\lambda _{2}}^{2N+2}\varphi _{j}^{\alpha }(\lambda _{1},\lambda
_{2})\right) G_{\lambda _{2}}(\omega _{2})\chi _{B_{j}}(\omega
_{2})\,d\lambda _{2}\right\vert d\lambda _{1}.
\end{eqnarray*}%
It follows that
\begin{eqnarray*}
\left\Vert T_{j}^{3}(f,g)\right\Vert _{p} &\leq &C\left\Vert f\right\Vert
_{p_{1}}\left\Vert g\right\Vert _{p_{2}}\int_{\left\vert \omega
_{1}\right\vert \geq 2^{j(1+\gamma )}}\sup_{\lambda _{1}\in \lbrack
0,1]}\left\vert \lambda _{1}^{2N+1}R_{\lambda _{1}}^{2m+1}(\omega
_{1})\right\vert \,d\omega _{1} \\
&&\times \int_{\left\vert \omega _{2}\right\vert \leq 2^{j(1+\gamma
)}}\int_{0}^{1}\left\vert \int_{0}^{1}\left( \partial _{\lambda
_{2}}^{2N+2}\varphi _{j}^{\alpha }(\lambda _{1},\lambda _{2})\right)
G_{\lambda _{2}}(\omega _{2})\,d\lambda _{2}\right\vert d\lambda _{1}d\omega
_{2}.
\end{eqnarray*}%
Applying the Cauchy-Schwartz's inequality and Lemma \ref{restriction}, we
get that
\begin{eqnarray*}
&&\int_{\left\vert \omega _{2}\right\vert \leq 2^{j(1+\gamma
)}}\int_{0}^{1}\left\vert \int_{0}^{1}\left( \partial _{\lambda
_{2}}^{2N+2}\varphi _{j}^{\alpha }(\lambda _{1},\lambda _{2})\right)
G_{\lambda _{2}}(\omega _{2})\,d\lambda _{2}\right\vert d\lambda _{1}d\omega
_{2} \\
&\leq &2^{j(1+\gamma )\frac{Q}{2}}\left( \int_{\mathbb{G}}\left\vert
\int_{0}^{1}\left( \partial _{\lambda _{2}}^{2N+2}\varphi _{j}^{\alpha
}(\lambda _{1},\lambda _{2})\right) G_{\lambda _{2}}(\omega _{2})\,d\lambda
_{2}\right\vert ^{2}d\omega _{2}\right) ^{\frac{1}{2}} \\
&\leq &C2^{j(1+\gamma )\frac{Q}{2}}\sup_{\lambda _{1},\lambda _{2}\in
\lbrack 0,1]}\left\vert \partial _{\lambda _{2}}^{2N+2}\varphi _{j}^{\alpha
}(\lambda _{1},\lambda _{2})\right\vert  \\
&\leq &C2^{j(1+\gamma )\frac{Q}{2}}2^{j(2N+2)}.
\end{eqnarray*}%
On the other hand, (\ref{es-0}) implies that %\begin{equation*}
%\sup_{\lambda _{1}\in \lbrack 0,1]}\left\vert \lambda _{1}^{2N+1}R_{\lambda
%_{1}}^{2N+1}(\omega _{1})\right\vert \leq \left( 1+\left\vert \left( A_{%
%\frac{\mu _{1}}{\left\vert \mu _{1}\right\vert }}x_{1},u_{1}\right)
%\right\vert \right) ^{-2N},
%\end{equation*}%
%which
\begin{eqnarray*}
&&\int_{\left\vert \omega _{1}\right\vert \geq 2^{j(1+\gamma
)}}\sup_{\lambda _{1}\in \lbrack 0,1]}\left\vert \lambda
_{1}^{2N+1}R_{\lambda _{1}}^{2N+1}(\omega _{1})\right\vert d\omega _{1} \\
&\leq &C\int_{\left\vert (x_{1},u_{1})\right\vert \geq 2^{j(1+\gamma
)}}\left( 1+\left\vert \left( A_{\frac{\mu _{1}}{\left\vert \mu
_{1}\right\vert }}x_{1},u_{1}\right) \right\vert \right) ^{-2N}dx_{1}du_{1}
\\
&\leq &C\int_{\left\vert \omega _{1}\right\vert \geq 2^{j(1+\gamma )}}\left(
1+\left\vert \omega _{1}\right\vert \right) ^{-2N}d\omega _{1} \\
&\leq &C2^{j(1+\gamma )(-2N+Q)}.
\end{eqnarray*}%
Thus,
\begin{equation*}
\left\Vert T_{j}^{3}(f,g)\right\Vert _{p}\leq C\left\Vert f\right\Vert
_{p_{1}}\left\Vert g\right\Vert _{p_{2}}2^{j\frac{(1+\gamma )Q}{2}%
}2^{j(2N+2)}2^{j(1+\gamma )(-2N+Q)}\text{.}
\end{equation*}%
Choose $N$ large enough such that
\begin{equation*}
2N\gamma \geq \frac{3}{2}Q(1+\gamma )+2,
\end{equation*}%
we have
\begin{equation}
\left\Vert T_{j}^{3}\right\Vert _{L^{p_{1}}\times L^{p_{2}}\rightarrow
L^{p}}\leq C2^{-\varepsilon j}  \label{TJ3}
\end{equation}%
for some $\varepsilon >0$. Obviously, (\ref{TJ3}) also holds for $T_{j}^{2}$.

Now, it remains to estimate $T_{j}^{1}$. $T_{j}^{1}$ is denoted by
\begin{equation*}
T_{j}^{1}(f,g)\left( \omega \right) =\int_{\mathbb{G}}\int_{\mathbb{G}%
}f(\omega \omega _{1}^{-1})g(\omega \omega _{2}^{-1})K_{j}^{1}(\omega
_{1},\omega _{2})d\omega _{1}d\omega _{2}.
\end{equation*}%
We assume that $\omega \in B_{j}(\xi ,\frac{1}{4}%
) $ and slipt the functions $f,g$ into three parts respectively: $%
f=f_{1}+f_{2}+f_{3}$, $g=g_{1}+g_{2}+g_{3}$, where
\begin{eqnarray*}
f_{1} &=&f\chi _{B_{j}(\xi ,\frac{3}{4})},\quad \quad\quad\quad\quad %
g_{1}=g\chi _{B_{j}(\xi ,\frac{3}{4})}, \\
f_{2} &=&f\chi _{B_{j}(\xi ,\frac{5}{4})\backslash B_{j}(\xi ,\frac{3}{4}%
)},\quad\quad g_{2}=g\chi _{B_{j}(\xi ,\frac{5}{4})\backslash
B_{j}(\xi ,\frac{3}{4})}, \\
f_{3} &=&f\chi _{\mathbb{G}\backslash B_{j}(\xi ,\frac{5}{4})},\quad\quad\quad\quad g_{3}=g\chi _{\mathbb{G}\backslash B_{j}(\xi ,\frac{5}{4})}.
\end{eqnarray*}%
Based on this decomposition, we notice that if $f_{3}\neq 0$ or $g_{3}\neq 0$%
,
\begin{equation*}
\left\vert \xi ^{-1}\cdot \omega \omega _{1}^{-1}\right\vert \geq \frac{5}{4}%
2^{j(1+\gamma )}\text{ or }\left\vert \xi ^{-1}\cdot \omega \omega
_{2}^{-1}\right\vert \geq \frac{5}{4}2^{j(1+\gamma )}.
\end{equation*}%
Since $\left\vert \xi ^{-1}\omega
\right\vert \leq \frac{1}{4}2^{j(1+\gamma )}$, by the triangle
inequality, we get that
\begin{equation*}
\left\vert \omega _{1}\right\vert \geq 2^{j(1+\gamma )}\text{ or }\left\vert
\omega _{2}\right\vert \geq 2^{j(1+\gamma )}.
\end{equation*}%
Because the kernel $K_{j}^{1}$ is supported on $B_{j}\times B_{j}$, we have $%
T_{j}^{1}(f_{3},g)=0$ and $T_{j}^{1}(f,g_{3})=0$. Similarly, $f_{2}\neq 0$
and $g_{2}\neq 0$ yield that
\begin{equation*}
\left\vert \omega _{1}\right\vert \geq \frac{1}{2}2^{j(1+\gamma )}\text{ and
}\left\vert \omega _{2}\right\vert \geq \frac{1}{2}2^{j(1+\gamma )}.
\end{equation*}%
So, we can repeat the proof of (\ref{TJ4}) to obtain that
\begin{eqnarray}
\left\Vert T_{j}^{1}(f_{2},g_{2})\right\Vert _{L^{p}(B_{j}(\xi ,\frac{1}{4}%
))} &\leq &C2^{-\varepsilon j}\left\Vert f_{2}\right\Vert _{p_{1}}\left\Vert
g_{2}\right\Vert _{p_{2}}  \notag \\
&\leq &C2^{-\varepsilon j}\left\Vert f\right\Vert _{L^{p_{1}}(B_{j}(\xi ,%
\frac{5}{4}))}\left\Vert g\right\Vert _{L^{p_{2}}(B_{j}(\xi ,\frac{5}{4}))}.
\label{f7}
\end{eqnarray}%
Taking the $L^{p}$ norm with respect to $\xi $, as the proof of Theorem \ref{T1}, we get that
\begin{equation}
\left\Vert T_{j}^{1}(f_{2},g_{2})\right\Vert _{p}\leq C2^{-\varepsilon
j}\left\Vert f\right\Vert _{p_{1}}\left\Vert g\right\Vert _{p_{2}}.
\label{f3}
\end{equation}%
If $f_{1}\neq 0$ and $g_{2}\neq 0$, we have
\begin{equation*}
\left\vert \omega _{1}\right\vert \leq 2^{j(1+\gamma )}\text{ and }%
\left\vert \omega _{2}\right\vert \geq \frac{1}{2}2^{j(1+\gamma )}.
\end{equation*}%
Repeating the proof of (\ref{TJ3}), we can conclude that
\begin{eqnarray*}
\left\Vert T_{j}^{1}(f_{1},g_{2})\right\Vert _{L^{p}(B_{j}(\xi ,\frac{1}{4}%
))} &\leq &C2^{-\varepsilon j}\left\Vert f_{1}\right\Vert _{p_{1}}\left\Vert
g_{2}\right\Vert _{p_{2}} \\
&\leq &C2^{-\varepsilon j}\left\Vert f\right\Vert _{L^{p_{1}}(B_{j}(\xi ,%
\frac{3}{4}))}\left\Vert g\right\Vert _{L^{p_{2}}(B_{j}(\xi ,\frac{5}{4}))}.
\end{eqnarray*}%
It follows that
\begin{equation}
\left\Vert T_{j}^{1}(f_{1},g_{2})\right\Vert _{p}\leq C2^{-\varepsilon
j}\left\Vert f\right\Vert _{p_{1}}\left\Vert g\right\Vert _{p_{2}}.
\label{f2}
\end{equation}%
Obviously, (\ref{f2}) also holds for $T_{j}^{1}(f_{2},g_{1})$. Finally, we
consider $T_{j}^{1}(f_{1},g_{1})$. Because $f_{1},g_{1}\neq 0$ implies that
\begin{equation*}
\left\vert \omega _{1}\right\vert \leq 2^{j(1+\gamma )}\text{ and }%
\left\vert \omega _{1}\right\vert \leq 2^{j(1+\gamma )},
\end{equation*}%
so
\begin{equation}
T_{j}^{1}(f_{1},g_{1})(\omega )=T_{j}^{\alpha }(f_{1},g_{1})(\omega )
\label{f1}
\end{equation}%
for any $\omega \in B_{j}\left( \xi ,\frac{1}{4}\right) $. Notice that $%
T_{j}^{\alpha }$ can be written as
\begin{eqnarray*}
T_{j}^{\alpha }(f,g) &=&\int_{0}^{\infty }\int_{0}^{\infty }\varphi
_{j}^{\alpha }\left( \lambda _{1},\lambda _{2}\right) P_{\lambda
_{1}}fP_{\lambda _{2}}g\,d\lambda _{1}d\lambda _{2} \\
&=&C\int_{[-1,1]^{2}}\varphi _{j}^{\alpha }\left( \left\vert \lambda
_{1}\right\vert ,\left\vert \lambda _{2}\right\vert \right) P_{\left\vert
\lambda _{1}\right\vert }fP_{\left\vert \lambda _{2}\right\vert }g\,d\lambda
_{1}d\lambda _{2}\text{.}
\end{eqnarray*}%
Since that for any fixed $s\in \lbrack -1,1]$, the function
\begin{equation*}
t\rightarrow \varphi _{j}^{\alpha }\left( \left\vert s\right\vert
,\left\vert t\right\vert \right)
\end{equation*}%
is supported in $[-1,1]$ and vanishes at endpoints $\pm 1$, we can expand
this function in Fourier series by considering a periodic extension on $%
\mathbb{R}$ of period $2$, i.e.
\begin{equation*}
\varphi _{j}^{\alpha }(\left\vert s\right\vert ,\left\vert t\right\vert
)=\sum_{k\in \mathbb{Z}}\gamma _{j,k}^{\alpha }(s)e^{i\pi kt},
\end{equation*}%
where the Fourier coefficients are given by
\begin{equation*}
\gamma _{j,k}^{\alpha }(s)=\frac{1}{2}\int_{-1}^{1}\varphi _{j}^{\alpha
}(\left\vert s\right\vert ,\left\vert t\right\vert )e^{-i\pi kt}\,dt\text{.}
\end{equation*}%
It is easy to see that for any $0<\delta <\alpha $,
\begin{equation*}
\sup_{s\in \lbrack -1,1]}\left\vert \gamma _{j,k}^{\alpha }(s)\right\vert
\left( 1+\left\vert k\right\vert \right) ^{1+\delta }\leq C2^{-j(\alpha
-\delta )}.
\end{equation*}%
Then, $T_{j}^{\alpha }$ can be expressed by
\begin{eqnarray*}
T_{j}^{\alpha }(f,g) &=&C\int_{[-1,1]^{2}}\varphi _{j}^{\alpha }\left(
\left\vert \lambda _{1}\right\vert ,\left\vert \lambda _{2}\right\vert
\right) P_{\left\vert \lambda _{1}\right\vert }fP_{\left\vert \lambda
_{2}\right\vert }g\,d\lambda _{1}d\lambda _{2} \\
&=&C\sum_{k\in \mathbb{Z}}\int_{[-1,1]^{2}}\gamma _{j,k}^{\alpha }(\lambda
_{1})e^{i\pi k\lambda _{2}}P_{\left\vert \lambda _{1}\right\vert
}fP_{\left\vert \lambda _{2}\right\vert }g\,d\lambda _{1}d\lambda _{2} \\
&=&C\sum_{k\in \mathbb{Z}}\int_{-1}^{1}\gamma _{j,k}^{\alpha }(\lambda
_{1})P_{\left\vert \lambda _{1}\right\vert }f\,d\lambda
_{1}\int_{-1}^{1}e^{i\pi k\lambda _{2}}P_{\left\vert \lambda _{2}\right\vert
}g\,d\lambda _{2}.
\end{eqnarray*}%
Applying the Cauchy-Schwartz's inequality and Lemma \ref{restriction}, we
have
\begin{eqnarray}
\left\Vert T_{j}^{\alpha }(f,g)\right\Vert _{1} &\leq &C\sum_{k\in \mathbb{Z}%
}\left\Vert \int_{-1}^{1}\gamma _{j,k}^{\alpha }(\lambda _{1})P_{\left\vert
\lambda _{1}\right\vert }f\,d\lambda _{1}\right\Vert _{2}\left\Vert
\int_{-1}^{1}e^{i\pi k\lambda _{2}}P_{\left\vert \lambda _{2}\right\vert
}g\,d\lambda _{2}\right\Vert _{2}  \notag \\
&\leq &C\sum_{k\in \mathbb{Z}}(1+\left\vert k\right\vert )^{-1-\delta
}\left( \sup_{s\in \lbrack -1,1]}\left\vert \gamma _{j,k}^{\alpha
}(s)\right\vert \left( 1+\left\vert k\right\vert \right) ^{1+\delta }\right)
\left\Vert f\right\Vert _{p_{1}}\left\Vert g\right\Vert _{p_{2}}  \notag \\
&\leq &C2^{-j(\alpha -\delta )}\left\Vert f\right\Vert _{p_{1}}\left\Vert
g\right\Vert _{p_{2}}.  \label{ff}
\end{eqnarray}%
Using the H\"{o}lder's inequality and (\ref{f1}), we have
\begin{eqnarray*}
\left\Vert T_{j}^{1}(f_{1},g_{1})\right\Vert _{L^{p}(B_{j}(\xi ,\frac{1}{4}%
))} &\leq &2^{j(1+\gamma )Q\left( \frac{1}{p}-1\right) }\left\Vert
T_{j}^{1}(f_{1},g_{1})\right\Vert _{L^{1}(B_{j}(\xi ,\frac{1}{4}))} \\
&=&2^{j(1+\gamma )Q\left( \frac{1}{p}-1\right) }\left\Vert T_{j}^{\alpha
}(f_{1},g_{1})\right\Vert _{L^{1}(B_{j}(\xi ,\frac{1}{4}))} \\
&\leq &C2^{-j(\alpha -\delta )}2^{j(1+\gamma )Q\left( \frac{1}{p}-1\right)
}\left\Vert f_{1}\right\Vert _{p_{1}}\left\Vert g_{1}\right\Vert _{p_{2}} \\
&\leq &C2^{-j(\alpha -\delta )}2^{j(1+\gamma )Q\left( \frac{1}{p}-1\right)
}\left\Vert f\right\Vert _{L^{p_{1}}(B_{j}(\xi ,\frac{3}{4}))}\left\Vert
g\right\Vert _{L^{p_{2}}(B_{j}(\xi ,\frac{3}{4}))}.
\end{eqnarray*}%
Taking the $L^{p}$ norm with respect to $\xi $, it follows that
\begin{equation}
\left\Vert T_{j}^{1}(f_{1},g_{1})\right\Vert _{p}\leq C2^{-j(\alpha -\delta
)}2^{j(1+\gamma )Q\left( \frac{1}{p}-1\right) }\left\Vert f\right\Vert
_{p_{1}}\left\Vert g\right\Vert _{p_{2}}.  \label{f5}
\end{equation}%
Combining (\ref{f3}), (\ref{f2}) and (\ref{f5}), we can conclude that
\begin{equation*}
\left\Vert T_{j}^{1}(f,g)\right\Vert _{p}\leq C2^{-j(\alpha -\delta
)}2^{j(1+\gamma )Q\left( \frac{1}{p}-1\right) }\left\Vert f\right\Vert
_{p_{1}}\left\Vert g\right\Vert _{p_{2}}.
\end{equation*}%
Thus, whenever $\alpha >Q\left( \frac{1}{p}-1\right) $, we can choose
suitable $\gamma ,\delta >0$ such that
\begin{equation*}
\alpha >Q(1+\gamma )\left( \frac{1}{p}-1\right) +\delta ,
\end{equation*}%
which means that there exists an $\varepsilon >0$ such that
\begin{equation*}
\left\Vert T_{j}^{1}\right\Vert _{L^{1}\times L^{p_{2}}\rightarrow
L^{p}}\leq 2^{-\varepsilon j}.
\end{equation*}%
The proof of Theorem \ref{mainTh} is completed.
\end{proof}

\section{Boundedness of $S^{\protect\alpha}$ for particular points}

In this section, we investigate the boundedness of $S^{\alpha }$ for some
specific triples of points $(p_{1},p_{2},p)$.

\subsection{The point $(1,\infty ,1)$}

\begin{theorem}
\label{1infty} If $\alpha >\frac{Q}{2}$, then $S^{\alpha }$ is bounded from $%
L^{1}(\mathbb{G})\times L^{\infty }(\mathbb{G})$ to $L^{1}(\mathbb{G})$.
\end{theorem}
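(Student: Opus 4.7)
The plan is to mirror the dyadic approach used in the proof of Theorem~\ref{mainTh}. Writing $S^\alpha=\sum_{j\ge 0}T_j^\alpha$ with $\varphi_j^\alpha(\lambda_1,\lambda_2)=(1-\lambda_1-\lambda_2)_+^\alpha\varphi(2^j(1-\lambda_1-\lambda_2))$ and
\[
T_j^\alpha(f,g)=\int_0^\infty\!\!\int_0^\infty \varphi_j^\alpha(\lambda_1,\lambda_2)\,P_{\lambda_1}f\,P_{\lambda_2}g\,d\lambda_1d\lambda_2,
\]
it suffices to show that when $\alpha>Q/2$, there is $\varepsilon>0$ with
\[
\|T_j^\alpha(f,g)\|_1\le C\,2^{-\varepsilon j}\|f\|_1\|g\|_\infty,\qquad j\ge 0.
\]
Summing in $j$ then yields the theorem.

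For each $j$, I will split the kernel $K_j^\alpha=K_j^1+K_j^2+K_j^3+K_j^4$ according to whether $\omega_1,\omega_2$ lie in $B_j=\{|\omega|\le 2^{j(1+\gamma)}\}$ or its complement, exactly as in the proof of Theorem~\ref{mainTh}. A direct Fubini calculation gives
\[
\|T_j^\ell(f,g)\|_1\le\|f\|_1\|g\|_\infty\,\|K_j^\ell\|_{L^1(\mathbb{G}\times\mathbb{G})},
\]
so the task reduces to estimating $\|K_j^\ell\|_1$. For the off-diagonal pieces $K_j^2,K_j^3,K_j^4$ (where at least one variable lies in $B_j^c$), $\|K_j^\ell\|_1\le C\,2^{-\varepsilon j}$ for any $\alpha>0$: for $K_j^4$ I will use the pointwise bound (\ref{Kj-kernel}) with $N$ taken large depending on $Q$ and $\gamma$, and for $K_j^2,K_j^3$ I will use the integration-by-parts rewriting of the kernel employed in the derivation of (\ref{TJ3}), combined with Lemma~\ref{restriction} applied in the $\lambda$-direction whose spatial variable sits in $B_j$.

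The main obstacle is the principal piece $T_j^1$, which I will handle by localizing $\omega\in B_j(\xi,1/4)$ and writing $f=f_1+f_2+f_3$, $g=g_1+g_2+g_3$ relative to $B_j(\xi,3/4)\subset B_j(\xi,5/4)$ as in Theorem~\ref{mainTh}. The support considerations there give $T_j^1(f_3,\cdot)=T_j^1(\cdot,g_3)=0$ on $B_j(\xi,1/4)$, while the cross contributions $T_j^1(f_1,g_2)$, $T_j^1(f_2,g_1)$ and $T_j^1(f_2,g_2)$ are absorbed by the same arguments used for $T_j^2,T_j^3,T_j^4$ (now with the relevant variable confined to the transition annulus). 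The crucial term $T_j^1(f_1,g_1)$ coincides with $T_j^\alpha(f_1,g_1)$ on $B_j(\xi,1/4)$, and for this I will apply the key inequality (\ref{ff}) from the proof of Theorem~\ref{mainTh} in the case $(p_1,p_2)=(1,2)$ to obtain
\[
\|T_j^\alpha(f_1,g_1)\|_1\le C\,2^{-j(\alpha-\delta)}\|f_1\|_1\|g_1\|_2,
\]
and then trade the global $L^\infty$-norm of $g$ for the local $L^2$-norm via
\[
\|g_1\|_2\le|B_j(\xi,3/4)|^{1/2}\|g\|_\infty\le C\,2^{jQ(1+\gamma)/2}\|g\|_\infty.
\]
The $L^1_\xi$-Fubini averaging argument of Theorems~\ref{T1} and~\ref{mainTh} absorbs the common $|B_j|$ factor, and the resulting net exponential factor is $2^{-j(\alpha-\delta-Q(1+\gamma)/2)}$, which is genuine decay precisely when $\alpha>Q/2=n+m$, upon choosing $\gamma,\delta>0$ sufficiently small.

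The only place where the hypothesis $\alpha>Q/2$ actually enters is this $L^\infty\hookrightarrow L^2$ trade on a ball of radius $2^{j(1+\gamma)}$ in the diagonal block $T_j^1(f_1,g_1)$; everything else is bookkeeping already developed in Theorem~\ref{bilinear-kernel}, Lemma~\ref{restriction}, and the proofs of Theorems~\ref{T1} and~\ref{mainTh}.
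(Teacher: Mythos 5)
Your proposal is correct and follows essentially the same route as the paper: the same dyadic decomposition and kernel splitting, with the off-diagonal pieces handled for arbitrary $\alpha>0$ exactly as in Theorem \ref{mainTh}, and the diagonal piece $T_j^1(f_1,g_1)$ controlled by applying (\ref{ff}) with exponents $(1,2)$ and then trading $\Vert g_1\Vert_2\leq |B_j(\xi,\tfrac{3}{4})|^{1/2}\Vert g\Vert_\infty\lesssim 2^{jQ(1+\gamma)/2}\Vert g\Vert_\infty$, which is precisely where the threshold $\alpha>Q/2$ arises in the paper's argument as well.
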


\begin{proof}
We keep the notations in Section 5. Note that (\ref{TJ4}), (\ref{TJ3}), (\ref%
{f3}) and (\ref{f2}) hold for any $\alpha >0$. So, the proof of Theorem \ref%
{mainTh} is valid apart from the estimate of $T_{j}^{1}(f_{1},g_{1})$.
According to (\ref{ff}), we know that for any $0<\delta <\alpha $,
\begin{equation*}
\left\Vert T_{j}^{\alpha }(f,g)\right\Vert _{1}\leq C2^{-j(\alpha -\delta
)}\left\Vert f\right\Vert _{1}\left\Vert g\right\Vert _{2}.
\end{equation*}%
Using the H\"{o}lder's inequality, we have
\begin{eqnarray*}
\left\Vert T_{j}^{1}(f_{1},g_{1})\right\Vert _{L^{1}(B_{j}(\xi ,\frac{1}{4}%
))} &=&\left\Vert T_{j}^{\alpha }(f_{1},g_{1})\right\Vert _{L^{1}(B_{j}(\xi ,%
\frac{1}{4}))} \\
&\leq &C2^{-j(\alpha -\delta )}\left\Vert f\right\Vert _{L^{1}(B_{j}(\xi ,%
\frac{3}{4}))}\left\Vert g\right\Vert _{L^{2}(B_{j}(\xi ,\frac{3}{4}))} \\
&\leq &C2^{-j(\alpha -\delta )}2^{j(1+\gamma )\frac{Q}{2}}\left\Vert
f\right\Vert _{L^{1}(B_{j}(\xi ,\frac{3}{4}))}\left\Vert g\right\Vert
_{L^{\infty }(B_{j}(\xi ,\frac{1}{4}))}.
\end{eqnarray*}%
Taking the $L^{p}$ norm with respect to $\xi $ yields that
\begin{equation*}
\left\Vert T_{j}^{1}(f_{1},g_{1})\right\Vert _{1}\leq C2^{-j(\alpha -\delta
)}2^{j(1+\gamma )\frac{Q}{2}}\left\Vert f\right\Vert _{1}\left\Vert
g\right\Vert _{\infty }.
\end{equation*}%
Thus, whenever $\alpha >\frac{Q}{2}$, we can choose $\gamma ,\delta >0$ such
that $\alpha >\frac{(1+\gamma )}{2}Q+\delta $, which means that there exists
$\varepsilon >0$ such that
\begin{equation*}
\left\Vert T_{j}^{1}(f_{1},g_{1})\right\Vert _{1}\leq C2^{-\varepsilon
j}\left\Vert f\right\Vert _{1}\left\Vert g\right\Vert _{\infty }.
\end{equation*}
\end{proof}

\subsection{The point $(\infty ,\infty ,\infty )$}

\begin{theorem}
\label{infty}If $\alpha >Q-\frac{1}{2}$, then $S^{\alpha }$ is bounded from $%
L^{\infty }(\mathbb{G})\times L^{\infty }(\mathbb{G})$ into $L^{\infty }(%
\mathbb{G})$.
\end{theorem}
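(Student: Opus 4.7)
The plan is to run the dyadic decomposition argument used in the proofs of Theorems \ref{T1} and \ref{mainTh} in the $L^{\infty }\times L^{\infty }\rightarrow L^{\infty }$ setting. Writing $S^{\alpha }=\sum_{j\geq 0}T_{j}^{\alpha }$ as in Section 5 and observing that
\begin{equation*}
\left\vert T_{j}^{\alpha }(f,g)(\omega )\right\vert \leq \left\Vert
f\right\Vert _{\infty }\left\Vert g\right\Vert _{\infty }\int_{\mathbb{G}
}\int_{\mathbb{G}}\left\vert K_{j}^{\alpha }(\omega _{1},\omega
_{2})\right\vert d\omega _{1}d\omega _{2},
\end{equation*}
the theorem reduces to producing some $\varepsilon >0$ such that $\left\Vert K_{j}^{\alpha }\right\Vert _{L^{1}(\mathbb{G}\times \mathbb{G})}\leq C2^{-\varepsilon j}$ for every $j\geq 0$.

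Set $B_{j}=\{\omega :\left\vert \omega \right\vert \leq 2^{j(1+\gamma )}\}$ for a small $\gamma >0$ to be fixed at the end, and split $K_{j}^{\alpha }=K_{j}^{1}+K_{j}^{2}+K_{j}^{3}+K_{j}^{4}$ exactly as in the proof of Theorem \ref{mainTh}, according to whether each of $\omega _{1},\omega _{2}$ lies in $B_{j}$ or in $B_{j}^{c}$. For $K_{j}^{4}$, the pointwise estimate (\ref{Kj-kernel}) integrates to a constant times $2^{j(-\alpha -1-4N\gamma +2Q(1+\gamma ))}$, which is $2^{-\varepsilon j}$ once $N$ is taken large enough, independently of $\alpha$. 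For $K_{j}^{2}$ and $K_{j}^{3}$, I would reuse the factorisation from the proof of (\ref{TJ3}) via the Riesz-type kernels $\lambda _{1}^{2N+1}R_{\lambda _{1}}^{2N+1}(\omega _{1})$ and $G_{\lambda _{2}}(\omega _{2})$: combining the pointwise estimate (\ref{es-0}) on the ``outside'' variable with the Cauchy--Schwarz inequality and Lemma \ref{restriction} on the ``inside'' variable yields a bound of the form $C\,2^{j(-2N\gamma +\frac{3}{2}Q(1+\gamma )+2)}$, again $2^{-\varepsilon j}$ for $N$ sufficiently large. Both of these estimates tolerate any $\alpha >0$.

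The central and most delicate step, and the one that enforces $\alpha >Q-\frac{1}{2}$, is the estimate for $K_{j}^{1}$. By the Cauchy--Schwarz inequality on $B_{j}\times B_{j}$,
\begin{equation*}
\left\Vert K_{j}^{1}\right\Vert _{L^{1}(\mathbb{G}\times \mathbb{G})}\leq
\left\vert B_{j}\right\vert \cdot \left\Vert K_{j}^{\alpha }\right\Vert
_{L^{2}(\mathbb{G}\times \mathbb{G})}.
\end{equation*}
To estimate $\left\Vert K_{j}^{\alpha }\right\Vert _{2}$, I would apply Plancherel in the central variables $u_{1},u_{2}$, change variables on $\mathfrak{g}_{1}$ via $A_{\eta _{i}}$, and invoke the orthogonality of the Laguerre functions $\{\varphi _{k}^{\left\vert \mu _{1}\right\vert }\}_{k}$ and $\{\varphi _{l}^{\left\vert \mu _{2}\right\vert }\}_{l}$ together with the bound $\left\Vert \varphi _{k}^{\left\vert \mu \right\vert }\right\Vert _{2}\leq C\left\vert \mu \right\vert ^{-n/2}k^{(n-1)/2}$ coming from (\ref{varphi}). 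Because $\varphi _{j}^{\alpha }((2k+n)\left\vert \mu _{1}\right\vert ,(2l+n)\left\vert \mu _{2}\right\vert )$ is supported where $(2k+n)\left\vert \mu _{1}\right\vert +(2l+n)\left\vert \mu _{2}\right\vert \sim 1$ on a $\mu $-set of measure of order $(2k+n)^{-m}(2l+n)^{-m}\cdot 2^{-j}$, and is bounded by $2^{-j\alpha }$, the change of variables $r_{i}=(2k+n)\left\vert \mu _{i}\right\vert $ should produce
\begin{equation*}
\left\Vert K_{j}^{\alpha }\right\Vert _{2}^{2}\leq C\,2^{-j(2\alpha
+1)}\sum_{k\geq 0}\frac{k^{n-1}}{(2k+n)^{n+m}}\sum_{l\geq 0}\frac{l^{n-1}}{
(2l+n)^{n+m}}\leq C\,2^{-j(2\alpha +1)},
\end{equation*}
the double series converging because $m\geq 1$. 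Combined with $\left\vert B_{j}\right\vert \sim 2^{j(1+\gamma )Q}$, this gives
\begin{equation*}
\left\Vert K_{j}^{1}\right\Vert _{L^{1}(\mathbb{G}\times \mathbb{G})}\leq
C\,2^{j((1+\gamma )Q-\alpha -\frac{1}{2})},
\end{equation*}
which decays geometrically whenever $\alpha >(1+\gamma )Q-\frac{1}{2}$; for $\alpha >Q-\frac{1}{2}$ a sufficiently small $\gamma >0$ delivers the required $2^{-\varepsilon j}$ decay. The hardest step is precisely this Plancherel/Laguerre computation for $\left\Vert K_{j}^{\alpha }\right\Vert _{2}$, where the presence of the $m$-dimensional centre forces careful tracking of the $(2k+n)^{-m}(2l+n)^{-m}$ factors.
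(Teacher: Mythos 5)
Your argument is correct and reaches the same threshold $\alpha>Q-\frac{1}{2}$, but the key step is genuinely different from the paper's. The paper does not estimate $\Vert K_{j}^{\alpha}\Vert_{L^{1}(\mathbb{G}\times\mathbb{G})}$ directly for the inner piece; instead it keeps the localization $f_{1}=f\chi_{B_{j}(\xi,3/4)}$, $g_{1}=g\chi_{B_{j}(\xi,3/4)}$ from Section 5, writes $T_{j}^{\alpha}(f,g)$ via twisted convolutions, bounds $\Vert f^{\mu_{1}}\times_{|\mu_{1}|}\varphi_{k}^{|\mu_{1}|}\Vert_{\infty}\leq\Vert f^{\mu_{1}}\Vert_{2}\Vert\varphi_{k}^{|\mu_{1}|}\Vert_{2}$, and then performs a Cauchy--Schwarz in $(\mu_{1},\mu_{2})$ with auxiliary weights $|\mu_{i}|^{\pm\delta}$, converting the resulting weighted $L^{2}$ norms of $(f_{1})^{\mu_{1}}$, $(g_{1})^{\mu_{2}}$ back to $L^{\infty}$ norms through the volume of $B_{j}(\xi,3/4)$ (with a separate splitting of the $\mu$-integral at $|\mu|\sim 2^{-2j(1+\gamma)}$). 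Your route replaces all of that by a single Cauchy--Schwarz on $B_{j}\times B_{j}$ against $\Vert K_{j}^{\alpha}\Vert_{L^{2}(\mathbb{G}\times\mathbb{G})}$, computed by Plancherel in $(u_{1},u_{2})$ and Laguerre orthogonality in each of $x_{1},x_{2}$ exactly as in the proof of Lemma \ref{restriction}; the support of $\varphi_{j}^{\alpha}$ has $|\mu_{1}|^{n}|\mu_{2}|^{n}\,d\mu_{1}d\mu_{2}$-measure $\lesssim 2^{-j}(2k+n)^{-n-m}(2l+n)^{-n-m}$, the series $\sum_{k}k^{n-1}(2k+n)^{-n-m}\sim\sum_{k}k^{-1-m}$ converges for $m\geq1$, and $|B_{j}\times B_{j}|^{1/2}\sim 2^{j(1+\gamma)Q}$ gives the threshold $(1+\gamma)Q-\frac{1}{2}$. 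Your treatment of the outer pieces via (\ref{Kj-kernel}), (\ref{es-0}) and the factorization behind (\ref{TJ3}) coincides with the paper's. What you gain is a shorter, purely kernel-based proof that dispenses with the $\delta$-weights and the localization in $\xi$; what the paper's version buys is a template (sup-norms of twisted convolutions plus weighted $L^{2}$ norms of $(f_{1})^{\mu}$) that is reused almost verbatim for the $(2,\infty,2)$ endpoint in Theorem \ref{Theorem212}, where a pure $L^{1}$ bound on the kernel would not suffice.
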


\begin{proof}
We still keep the notation in last Section. To proof this Theorem, it
suffices to estimate $T_{j}^{1}(f_{1},g_{1})$. Notice that
\begin{equation*}
P_{\lambda }f(x,u)=\sum_{k=0}^{\infty }\frac{\lambda ^{n+m-1}}{\left( 2\pi
(2k+n)\right) ^{n+m}}\int_{\mathbb{S}^{m-1}}f\ast \widetilde{e}_{k}^{\lambda
\eta }(x,u)d\sigma (\eta ),
\end{equation*}%
and
\begin{equation*}
f\ast e_{k}^{\lambda \eta }(x,u)=e^{-i\lambda \eta (u)}\left( \left( f_{\eta
}^{\lambda \eta }\times _{\lambda }\varphi _{k}^{\lambda }\right) \circ
A_{\eta }\right) (x)\text{.}
\end{equation*}%
Then, $T_{j}^{\alpha }(f,g)$ can be written as
\begin{eqnarray*}
&&T_{j}^{\alpha }(f,g)(x,u) \\
&=&\frac{1}{(2\pi )^{Q}}\sum_{k=0}^{\infty }\sum_{l=0}^{\infty }\int_{%
%TCIMACRO{\U{211d} }%
%BeginExpansion
\mathbb{R}
%EndExpansion
}\int_{%
%TCIMACRO{\U{211d} }%
%BeginExpansion
\mathbb{R}
%EndExpansion
}\varphi _{j}^{\alpha }((2k+n)\lambda _{1},(2l+n)\lambda _{2})\int_{\mathbb{S%
}^{m-1}}f\ast e_{k}^{\lambda _{1}\eta _{1}}(x,u)d\sigma (\eta _{1}) \\
&&\text{ \ \ \ \ \ \ \ \ \ \ \ \ \ \ \ \ \ \ \ \ }\times \int_{\mathbb{S}%
^{m-1}}f\ast e_{l}^{\lambda _{2}\eta _{2}}(x,u)d\sigma (\eta _{2})\lambda
_{1}^{n+m-1}\lambda _{2}^{n+m-1}d\lambda _{1}d\lambda _{2} \\
&=&\frac{1}{(2\pi )^{Q}}\sum_{k=0}^{\infty }\sum_{k=0}^{\infty
}\int_{0}^{\infty }\int_{0}^{\infty }\varphi _{j}^{\alpha }\left(
(2k+n)\lambda _{1},(2l+n)\lambda _{2}\right) \int_{\mathbb{S}%
^{m-1}}e^{-i\lambda _{1}\eta _{1}(u)}\left( \left( f_{\eta _{1}}^{\lambda
_{1}\eta _{1}}\times _{\lambda _{1}}\varphi _{k}^{\lambda _{1}}\right) \circ
A_{\eta _{1}}\right) (x)d\sigma (\eta _{1}) \\
&&\text{ \ \ \ \ \ \ \ \ \ \ \ \ \ \ \ \ \ \ \ \ }\times \int_{\mathbb{S}%
^{m-1}}e^{-i\lambda _{2}\eta _{2}(u)}\left( \left( f_{\eta _{2}}^{\lambda
_{2}\eta _{2}}\times _{\lambda _{2}}\varphi _{l}^{\lambda _{2}}\right) \circ
A_{\eta _{2}}\right) (x)d\sigma (\eta _{2})\,\lambda _{1}^{n+m-1}\lambda
_{2}^{n+m-1}d\lambda _{1}d\lambda _{2} \\
&=&\frac{1}{(2\pi )^{Q}}\int_{%
%TCIMACRO{\U{211d} }%
%BeginExpansion
\mathbb{R}
%EndExpansion
^{m}}\int_{%
%TCIMACRO{\U{211d} }%
%BeginExpansion
\mathbb{R}
%EndExpansion
^{m}}\sum_{k=0}^{\infty }\sum_{l=0}^{\infty }e^{-i(\mu _{1}+\mu
_{2})(u)}\varphi _{j}^{\alpha }((2k+n)\left\vert \mu _{1}\right\vert
,(2l+n)\left\vert \mu _{2}\right\vert ) \\
&&\text{ \ \ \ \ \ \ \ \ }\times \left( \left( f_{\frac{\mu _{1}}{\left\vert
\mu _{1}\right\vert }}^{\mu _{1}}\times _{\left\vert \mu _{1}\right\vert
}\varphi _{k}^{\left\vert \mu _{1}\right\vert }\right) \circ A_{\frac{\mu
_{1}}{\left\vert \mu _{1}\right\vert }}\right) (x)\left( \left( f_{\frac{\mu
_{2}}{\left\vert \mu _{2}\right\vert }}^{\mu _{2}}\times _{\left\vert \mu
_{2}\right\vert }\varphi _{l}^{\left\vert \mu _{2}\right\vert }\right) \circ
A_{\frac{\mu _{2}}{\left\vert \mu _{2}\right\vert }}\right) (x)\left\vert
\mu _{1}\right\vert ^{n}\left\vert \mu _{2}\right\vert ^{n}d\mu _{1}d\mu
_{2}.
\end{eqnarray*}%
Using (\ref{varphi}), we get that for any $0<\delta <m$,
\begin{eqnarray*}
&&\left\Vert T_{j}^{\alpha }(f,g)\right\Vert _{\infty } \\
&\leq &C\int_{%
%TCIMACRO{\U{211d} }%
%BeginExpansion
\mathbb{R}
%EndExpansion
^{m}}\int_{%
%TCIMACRO{\U{211d} }%
%BeginExpansion
\mathbb{R}
%EndExpansion
^{m}}\sum_{k=0}^{\infty }\sum_{l=0}^{\infty }\left\vert \varphi _{j}^{\alpha
}((2k+n)\left\vert \mu _{1}\right\vert ,(2l+n)\left\vert \mu _{2}\right\vert
)\right\vert \\
&&\times \left\Vert \left( f_{\frac{\mu _{1}}{\left\vert \mu _{1}\right\vert
}}^{\mu _{1}}\times _{\left\vert \mu _{1}\right\vert }\varphi
_{k}^{\left\vert \mu _{1}\right\vert }\right) \circ A_{\frac{\mu _{1}}{%
\left\vert \mu _{1}\right\vert }}\right\Vert _{\infty }\left\Vert \left( g_{%
\frac{\mu _{2}}{\left\vert \mu _{2}\right\vert }}^{\mu _{2}}\times
_{\left\vert \mu _{2}\right\vert }\varphi _{l}^{\left\vert \mu
_{2}\right\vert }\right) \circ A_{\frac{\mu _{2}}{\left\vert \mu
_{2}\right\vert }}\right\Vert _{\infty }\left\vert \mu _{1}\right\vert
^{n}\left\vert \mu _{2}\right\vert ^{n}d\mu _{1}d\mu _{2} \\
&\leq &C\int_{\mathbb{R}^{m}}\int_{\mathbb{R}^{m}}\sum_{k=0}^{\infty
}\sum_{l=0}^{\infty }\left\vert \varphi _{j}^{\alpha }\left(
(2k+n)\left\vert \mu _{1}\right\vert ,(2l+n)\left\vert \mu _{2}\right\vert
\right) \right\vert \\
&&\times \left\Vert f_{\frac{\mu _{1}}{\left\vert \mu _{1}\right\vert }%
}^{\mu _{1}}\times _{\left\vert \mu _{1}\right\vert }\varphi
_{k}^{\left\vert \mu _{1}\right\vert }\right\Vert _{\infty }\left\Vert g_{%
\frac{\mu _{2}}{\left\vert \mu _{2}\right\vert }}^{\mu _{2}}\times
_{\left\vert \mu _{2}\right\vert }\varphi _{l}^{\left\vert \mu
_{2}\right\vert }\right\Vert _{\infty }\left\vert \mu _{1}\right\vert
^{n}\left\vert \mu _{2}\right\vert ^{n}d\mu _{1}d\mu _{2} \\
&\leq &C\int_{\mathbb{R}^{m}}\int_{\mathbb{R}^{m}}\sum_{k=0}^{\infty
}\sum_{l=0}^{\infty }\left\vert \varphi _{j}^{\alpha }\left(
(2k+n)\left\vert \mu _{1}\right\vert ,(2l+n)\left\vert \mu _{2}\right\vert
\right) \right\vert \\
&&\times \left\Vert f_{\frac{\mu _{1}}{\left\vert \mu _{1}\right\vert }%
}^{\mu _{1}}\right\Vert _{2}\left\Vert \varphi _{k}^{\left\vert \mu
_{1}\right\vert }\right\Vert _{2}\left\Vert g_{\frac{\mu _{2}}{\left\vert
\mu _{2}\right\vert }}^{\mu _{2}}\right\Vert _{2}\left\Vert \varphi
_{l}^{\left\vert \mu _{2}\right\vert }\right\Vert _{2}\left\vert \mu
_{1}\right\vert ^{n}\left\vert \mu _{2}\right\vert ^{n}d\mu _{1}d\mu _{2} \\
&\leq &C\int_{\mathbb{R}^{m}}\int_{\mathbb{R}^{m}}\sum_{k=0}^{\infty
}\sum_{l=0}^{\infty }\left\vert \varphi _{j}^{\alpha }\left(
(2k+n)\left\vert \mu _{1}\right\vert ,(2l+n)\left\vert \mu _{2}\right\vert
\right) \right\vert \left\vert \mu _{1}\right\vert ^{\frac{n}{2}}k^{\frac{n-1%
}{2}}\left\vert \mu _{2}\right\vert ^{\frac{n}{2}}l^{\frac{n-1}{2}%
}\left\Vert f_{\frac{\mu _{1}}{\left\vert \mu _{1}\right\vert }}^{\mu
_{1}}\right\Vert _{2}\left\Vert g_{\frac{\mu _{2}}{\left\vert \mu
_{2}\right\vert }}^{\mu _{2}}\right\Vert _{2}\,d\mu _{1}d\mu _{2} \\
&\leq &C\left( \int_{\mathbb{R}^{m}}\int_{\mathbb{R}^{m}}\left(
\sum_{k=0}^{\infty }\sum_{l=0}^{\infty }\left\vert \varphi _{j}^{\alpha
}\left( (2k+n)\left\vert \mu _{1}\right\vert ,(2l+n)\left\vert \mu
_{2}\right\vert \right) \right\vert \left\vert \mu _{1}\right\vert ^{\frac{%
n+\delta }{2}}k^{\frac{n-1}{2}}\left\vert \mu _{2}\right\vert ^{\frac{%
n+\delta }{2}}l^{\frac{n-1}{2}}\right) ^{2}d\mu _{1}d\mu _{2}\right) ^{\frac{%
1}{2}} \\
&&\times \left( \int_{\left\vert \mu _{1}\right\vert \leq 1}\int_{\left\vert
\mu _{2}\right\vert \leq 1}\left\Vert f_{\frac{\mu _{1}}{\left\vert \mu
_{1}\right\vert }}^{\mu _{1}}\right\Vert _{2}^{2}\left\Vert g_{\frac{\mu _{2}%
}{\left\vert \mu _{2}\right\vert }}^{\mu _{2}}\right\Vert _{2}^{2}\left\vert
\mu _{1}\right\vert ^{-\delta }\left\vert \mu _{2}\right\vert ^{-\delta
}\,d\mu _{1}d\mu _{2}\right) ^{\frac{1}{2}} \\
&\leq &C\sum_{k=0}^{\infty }\sum_{l=0}^{\infty }k^{\frac{n-1}{2}}l^{\frac{n-1%
}{2}}\left( \int_{\mathbb{R}^{m}}\int_{\mathbb{R}^{m}}\left\vert \varphi
_{j}^{\alpha }\left( (2k+n)\left\vert \mu _{1}\right\vert ,(2l+n)\left\vert
\mu _{2}\right\vert \right) \right\vert ^{2}\left\vert \mu _{1}\right\vert
^{n+\delta }\left\vert \mu _{2}\right\vert ^{n+\delta }\,d\mu _{1}d\mu
_{2}\right) ^{\frac{1}{2}} \\
&&\times \left( \int_{\left\vert \mu _{1}\right\vert \leq 1}\left\Vert f_{%
\frac{\mu _{1}}{\left\vert \mu _{1}\right\vert }}^{\mu _{1}}\right\Vert
_{2}^{2}\left\vert \mu _{1}\right\vert ^{-\delta }\,d\mu _{1}\right) ^{\frac{%
1}{2}}\left( \int_{\left\vert \mu _{2}\right\vert \leq 1}\left\Vert g_{\frac{%
\mu _{2}}{\left\vert \mu _{2}\right\vert }}^{\mu _{2}}\right\Vert
_{2}^{2}\left\vert \mu _{2}\right\vert ^{-\delta }\,d\mu _{2}\right) ^{\frac{%
1}{2}} \\
&\leq &C\sum_{k=0}^{\infty }\sum_{l=0}^{\infty }k^{\frac{n-1}{2}}l^{\frac{n-1%
}{2}}(2k+n)^{-\frac{n+m+\delta }{2}}(2l+n)^{-\frac{n+m+\delta }{2}}\left(
\int_{\mathbb{R}^{m}}\int_{\mathbb{R}^{m}}\left\vert \varphi _{j}^{\alpha
}\left( \left\vert \mu _{1}\right\vert ,\left\vert \mu _{2}\right\vert
\right) \right\vert ^{2}\left\vert \mu _{1}\right\vert ^{n+\delta
}\left\vert \mu _{2}\right\vert ^{n+\delta }\,d\mu _{1}d\mu _{2}\right) ^{%
\frac{1}{2}} \\
&&\times \left( \int_{\left\vert \mu _{1}\right\vert \leq 1}\left\Vert f_{%
\frac{\mu _{1}}{\left\vert \mu _{1}\right\vert }}^{\mu _{1}}\right\Vert
_{2}^{2}\left\vert \mu _{1}\right\vert ^{-\delta }\,d\mu _{1}\right) ^{\frac{%
1}{2}}\left( \int_{\left\vert \mu _{2}\right\vert \leq 1}\left\Vert g_{\frac{%
\mu _{2}}{\left\vert \mu _{2}\right\vert }}^{\mu _{2}}\right\Vert
_{2}^{2}\left\vert \mu _{2}\right\vert ^{-\delta }\,d\mu _{2}\right) ^{\frac{%
1}{2}} \\
&\leq &C\left( \sum_{l=0}^{\infty }k^{-\frac{m+\delta +1}{2}}\right) \left(
\sum_{k=0}^{\infty }l^{-\frac{m+\delta +1}{2}}\right) \left( \int_{\mathbb{R}%
^{m}}\int_{\mathbb{R}^{m}}\left\vert \varphi _{j}^{\alpha }\left( \left\vert
\mu _{1}\right\vert ,\left\vert \mu _{2}\right\vert \right) \right\vert
^{2}\left\vert \mu _{1}\right\vert ^{n+\delta }\left\vert \mu
_{2}\right\vert ^{n+\delta }\,d\mu _{1}d\mu _{2}\right) ^{\frac{1}{2}} \\
&&\times \left( \int_{\left\vert \mu _{1}\right\vert \leq 1}\left\Vert f_{%
\frac{\mu _{1}}{\left\vert \mu _{1}\right\vert }}^{\mu _{1}}\right\Vert
_{2}^{2}\left\vert \mu _{1}\right\vert ^{-\delta }\,d\mu _{1}\right) ^{\frac{%
1}{2}}\left( \int_{\left\vert \mu _{2}\right\vert \leq 1}\left\Vert g_{\frac{%
\mu _{2}}{\left\vert \mu _{2}\right\vert }}^{\mu _{2}}\right\Vert
_{2}^{2}\left\vert \mu _{2}\right\vert ^{-\delta }\,d\mu _{2}\right) ^{\frac{%
1}{2}} \\
&\leq &C2^{-j(\alpha +\frac{1}{2})}\left( \int_{\left\vert \mu
_{1}\right\vert \leq 1}\left\Vert f_{\frac{\mu _{1}}{\left\vert \mu
_{1}\right\vert }}^{\mu _{1}}\right\Vert _{2}^{2}\left\vert \mu
_{1}\right\vert ^{-\delta }\,d\mu _{1}\right) ^{\frac{1}{2}}\left(
\int_{\left\vert \mu _{2}\right\vert \leq 1}\left\Vert g_{\frac{\mu _{2}}{%
\left\vert \mu _{2}\right\vert }}^{\mu _{2}}\right\Vert _{2}^{2}\left\vert
\mu _{2}\right\vert ^{-\delta }\,d\mu _{2}\right) ^{\frac{1}{2}}.
\end{eqnarray*}%
Because
\begin{equation*}
T_{j}^{1}(f_{1},g_{1})(\omega )=T_{j}^{\alpha }(f_{1},g_{1})(\omega )\quad
\text{for any }\omega \in B_{j}(\xi ,\frac{1}{4}),
\end{equation*}%
we have
\begin{eqnarray}
&&\left\Vert T_{j}^{1}(f_{1},g_{1})\right\Vert _{L^{\infty }(B_{j}(\xi ,%
\frac{1}{4}))}  \label{m0} \\
&=&\left\Vert T_{j}^{\alpha }(f_{1},g_{1})\right\Vert _{L^{\infty
}(B_{j}(\xi ,\frac{1}{4}))}  \notag \\
&\leq &C2^{-j(\alpha +\frac{1}{2})}\left( \int_{\left\vert \mu
_{1}\right\vert \leq 1}\left\Vert (f_{1})_{\frac{\mu _{1}}{\left\vert \mu
_{1}\right\vert }}^{\mu _{1}}\right\Vert _{2}^{2}\left\vert \mu
_{1}\right\vert ^{-\delta }\,d\mu _{1}\right) ^{\frac{1}{2}}\left(
\int_{\left\vert \mu _{2}\right\vert \leq 1}\left\Vert (g_{1})_{\frac{\mu
_{2}}{\left\vert \mu _{2}\right\vert }}^{\mu _{2}}\right\Vert
_{2}^{2}\left\vert \mu _{2}\right\vert ^{-\delta }\,d\mu _{2}\right) ^{\frac{%
1}{2}}.  \notag
\end{eqnarray}%
Considering the integral about $\mu _{1}$, we notice that
\begin{equation*}
\left\Vert (f_{1})_{\frac{\mu _{1}}{\left\vert \mu _{1}\right\vert }}^{\mu
_{1}}\right\Vert _{2}=\left\vert \det A_{\frac{\mu _{1}}{\left\vert \mu
_{1}\right\vert }}\right\vert ^{\frac{1}{2}}\left\Vert f_{1}^{\mu
_{1}}\right\Vert _{2}\leq C2^{j(1+\gamma )n}\left\Vert f_{1}^{\mu
_{1}}\right\Vert _{\infty }\leq C2^{j(1+\gamma )\left( \frac{Q}{2}+m\right)
}\left\Vert f\right\Vert _{L^{\infty }(B_{j}(\xi ,\frac{3}{4}))}.
\end{equation*}%
So, we can get that
\begin{eqnarray*}
&&\int_{\left\vert \mu _{1}\right\vert \leq 1}\left\Vert (f_{1})_{\frac{\mu
_{1}}{\left\vert \mu _{1}\right\vert }}^{\mu _{1}}\right\Vert
_{2}^{2}\left\vert \mu _{1}\right\vert ^{-\delta }\,d\mu _{1} \\
&=&\int_{2^{-2j(1+\gamma )}\leq \left\vert \mu _{1}\right\vert \leq
1}\left\Vert (f_{1})_{\frac{\mu _{1}}{\left\vert \mu _{1}\right\vert }}^{\mu
_{1}}\right\Vert _{2}^{2}\left\vert \mu _{1}\right\vert ^{-\delta }\,d\mu
_{1}+\int_{\left\vert \mu _{1}\right\vert \leq 2^{-2j(1+\gamma )}}\left\Vert
(f_{1})_{\frac{\mu _{1}}{\left\vert \mu _{1}\right\vert }}^{\mu
_{1}}\right\Vert _{2}^{2}\left\vert \mu _{1}\right\vert ^{-\delta }\,d\mu
_{1} \\
&\leq &2^{2j\delta (1+\gamma )}\int_{%
%TCIMACRO{\U{211d} }%
%BeginExpansion
\mathbb{R}
%EndExpansion
^{m}}\left\Vert (f_{1})_{\frac{\mu _{2}}{\left\vert \mu _{2}\right\vert }%
}^{\mu _{2}}\right\Vert _{2}^{2}\,d\mu _{1}+C2^{j(1+\gamma )\left(
Q+2m\right) }\left\Vert f\right\Vert _{L^{\infty }(B_{j}(\xi ,\frac{3}{4}%
))}^{2}\int_{\left\vert \mu _{1}\right\vert \leq 2^{-2j(1+\gamma
)}}\left\vert \mu _{1}\right\vert ^{-\delta }\,d\mu _{1} \\
&\leq &2^{2j\delta (1+\gamma )}\left\Vert f_{1}\right\Vert
_{2}^{2}+C2^{j(1+\gamma )\left( Q+2m\right) }2^{-2j(1+\gamma )(-\delta
+m)}\left\Vert f\right\Vert _{L^{\infty }(B_{j}(\xi ,\frac{3}{4}))}^{2} \\
&\leq &C2^{j(1+\gamma )(Q+2\delta )}\left\Vert f\right\Vert _{L^{\infty
}(B_{j}(\xi ,\frac{3}{4}))}^{2}.
\end{eqnarray*}%
In the same way, we can also get that
\begin{equation}
\int_{\left\vert \mu _{2}\right\vert \leq 1}\left\Vert (g_{1})_{\frac{\mu
_{2}}{\left\vert \mu _{2}\right\vert }}^{\mu _{2}}\right\Vert
_{2}^{2}\left\vert \mu _{2}\right\vert ^{-\delta }\,d\mu _{2}\leq
C2^{j(1+\gamma )\left( Q+2\delta \right) }\left\Vert g\right\Vert
_{L^{\infty }(B_{j}(\xi ,\frac{3}{4}))}^{2}.  \label{m1}
\end{equation}%
From (\ref{m0}) and the above estimates, we have
\begin{equation*}
\left\Vert T_{j}^{1}(f_{1},g_{1})\right\Vert _{L^{\infty }(B_{j}(\xi ,\frac{1%
}{4}))}\leq C2^{-j(\alpha +\frac{1}{2})}2^{j(1+\gamma )\left( Q+2\delta
\right) }\left\Vert f\right\Vert _{L^{\infty }(B_{j}(\xi ,\frac{3}{4}%
))}\left\Vert g\right\Vert _{L^{\infty }(B_{j}(\xi ,\frac{3}{4}))}.
\end{equation*}%
It follows that
\begin{equation*}
\left\Vert T_{j}^{1}(f_{1},g_{1})\right\Vert _{L^{\infty }}\leq
C2^{-j(\alpha +\frac{1}{2})}2^{j(1+\gamma )\left( Q+2\delta \right)
}\left\Vert f\right\Vert _{L^{\infty }}\left\Vert g\right\Vert _{L^{\infty
}}.
\end{equation*}%
Therefore, whenever $\alpha >Q-\frac{1}{2}$, we can choose $\gamma ,\delta
>0 $ such that $\alpha >(1+\gamma )(Q+2\delta )-\frac{1}{2}$, which implies
there exists an $\varepsilon >0$ such that
\begin{equation*}
\left\Vert T_{j}^{1}(f_{1},g_{1})\right\Vert _{L^{\infty }}\leq
C2^{-\varepsilon j}\left\Vert f\right\Vert _{L^{\infty }}\left\Vert
g\right\Vert _{L^{\infty }}.
\end{equation*}%
The proof of Theorem \ref{infty} is completed.
\end{proof}

\subsection{The point $(2,\infty ,2)$}

\begin{theorem}
\label{Theorem212} If $\alpha >\frac{Q-1}{2}$, then $S^{\alpha }$ is bounded
from $L^{2}(\mathbb{G})\times L^{\infty }(\mathbb{G})$ to $L^{2}(\mathbb{G})$%
.
\end{theorem}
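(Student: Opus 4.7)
The plan is to mirror the scheme of Theorems \ref{1infty} and \ref{infty}. Decompose $S^{\alpha} = \sum_{j \ge 0} T_j^{\alpha}$, fix a parameter $\gamma > 0$, and split the kernel $K_j^{\alpha}$ into the four pieces $K_j^1, K_j^2, K_j^3, K_j^4$ using the indicator functions of $B_j = \{\omega : |\omega| \le 2^{j(1+\gamma)}\}$ and its complement in each of the two variables. The tail estimates \eqref{TJ4}, \eqref{TJ3}, \eqref{f3}, \eqref{f2}, together with the off-diagonal estimates on $T_j^1(f_i, g_l)$ for $(i,l) \neq (1,1)$, all transfer verbatim from the proof of Theorem \ref{mainTh}: they hold for every $\alpha > 0$ and contribute an exponentially decaying series in $j$. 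Hence only the localized diagonal piece $T_j^1(f_1, g_1) = T_j^{\alpha}(f_1, g_1)$ on $B_j(\xi, 1/4)$ requires a new argument.

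To estimate this piece, I would adapt the Laguerre-based computation from the proof of Theorem \ref{infty}, but take the $L^2$ norm of $T_j^{\alpha}(f_1, g_1)$ in $(x, u)$ instead of the $L^\infty$ norm. Applying Plancherel in the central variable $u$ converts the bilinear expression into a double integral in $(x, \tau)$ with a convolution structure in $\tau \in \mathfrak{g}_2^{\ast}$; expanding the spectral projections via the Laguerre functions as in Theorem \ref{infty} and applying Cauchy--Schwartz with the same weight $|\mu|^{-\delta}$ trick, but now measuring the $f$-factor in $L^2_\tau L^2_x$ rather than pointwise, should yield a bound of the form
$$\|T_j^{\alpha}(f_1, g_1)\|_{L^2} \le C\, 2^{-j(\alpha + \frac{1}{2})} \Big(\int \|(f_1)_{\mu_1/|\mu_1|}^{\mu_1}\|_2^2\, d\mu_1\Big)^{1/2} \Big(\int_{|\mu_2| \le 1} \|(g_1)_{\mu_2/|\mu_2|}^{\mu_2}\|_2^2\, |\mu_2|^{-\delta}\, d\mu_2\Big)^{1/2}.$$
The first factor is controlled by $C\, \|f_1\|_2$ via Plancherel in $u$ (the weight $|\mu_1|^{-\delta}$ is no longer needed because Plancherel has replaced the pointwise supremum used in Theorem \ref{infty}), while the second factor is handled by the same volume-transfer argument culminating in \eqref{m1}, giving $C\, 2^{j(1+\gamma)(Q/2 + \delta)} \|g\|_{L^\infty(B_j(\xi, 3/4))}$. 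Taking the $L^2$ norm in $\xi$ yields $\|T_j^1(f_1, g_1)\|_2 \le C\, 2^{-j(\alpha + \frac{1}{2})} 2^{j(1+\gamma)(Q/2 + \delta)}\|f\|_2 \|g\|_\infty$, and whenever $\alpha > (Q-1)/2$ one may choose $\gamma, \delta > 0$ small enough to force a total exponent $-\varepsilon j$ for some $\varepsilon > 0$.

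The main obstacle lies precisely in this transition from an $L^\infty$ output to an $L^2$ output: passing from $\sup_u$ to $\|\cdot\|_{L^2_u}$ introduces a convolution in $\tau$ that must be reconciled with the Cauchy--Schwartz in $\mu_1$ and the Laguerre sums in $k, l$ simultaneously. The bookkeeping must be arranged so that the spectral orthogonality available for $f \in L^2$ is genuinely exploited (that is, producing $\|f\|_2$ rather than a larger localized norm), while the $L^\infty$ control of $g$ still allows the weighted-Plancherel argument. It is exactly this extra orthogonality for $f$ that produces the $\tfrac{1}{2}$-improvement over the bound $Q/2$ appearing in Theorem \ref{1infty}, and the most delicate point is verifying that this gain survives the convolution-in-$\tau$ structure without being lost in the Laguerre expansion.
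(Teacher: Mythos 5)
Your proposal follows essentially the same route as the paper's proof: after the standard reduction to the diagonal piece $T_j^1(f_1,g_1)$, the paper rewrites $T_j^{\alpha}$ with the change of variables $\mu_1\mapsto\mu_1-\mu_2$ to expose the convolution in the central frequency variable, applies Plancherel in $u$ together with Minkowski's inequality and Cauchy--Schwarz, uses the orthogonality of the special Hermite projections to recover exactly $\|f\|_2$, and controls the $g$-factor by the weighted integral $\int_{|\mu_2|\le 1}\|(g_1)^{\mu_2}\|_2^2|\mu_2|^{-\delta}d\mu_2$ via (\ref{m1}), arriving at the same bound $C2^{-j(\alpha+\frac{1}{2})}2^{j(1+\gamma)(\frac{Q}{2}+\delta)}\|f\|_2\|g\|_\infty$ that you predict. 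The ``delicate bookkeeping'' you flag is resolved in the paper precisely by Minkowski's inequality applied after Plancherel, which decouples the $\mu_2$-convolution from the $L^2_x$ orthogonality in $k$, so your outline is correct and matches the paper's argument.
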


\begin{proof}
As above, it suffices to estimate $T_{j}^{1}(f_{1},g_{1})$. We write $%
T_{j}^{\alpha }(f,g)$ as

\begin{eqnarray*}
&&T_{j}^{\alpha }(f,g)(x,u) \\
&=&\frac{1}{(2\pi )^{Q}}\int_{%
%TCIMACRO{\U{211d} }%
%BeginExpansion
\mathbb{R}
%EndExpansion
^{m}}\int_{%
%TCIMACRO{\U{211d} }%
%BeginExpansion
\mathbb{R}
%EndExpansion
^{m}}e^{-i\left( \mu _{1}+\mu _{2}\right) (u)}\sum_{k=0}^{\infty
}\sum_{l=0}^{\infty }\varphi _{j}^{\alpha }((2k+n)\left\vert \mu
_{1}\right\vert ,(2l+n)\left\vert \mu _{2}\right\vert ) \\
&&\times \left( f_{\frac{\mu _{1}}{\left\vert \mu _{1}\right\vert }}^{\mu
_{1}}\times _{\left\vert \mu _{1}\right\vert }\varphi _{k}^{\left\vert \mu
_{1}\right\vert }\right) \left( A_{\frac{\mu _{1}}{\left\vert \mu
_{1}\right\vert }}x\right) \left( g_{\frac{\mu _{2}}{\left\vert \mu
_{2}\right\vert }}^{\mu _{2}}\times _{\left\vert \mu _{2}\right\vert
}\varphi _{l}^{\left\vert \mu _{2}\right\vert }\right) \left( A_{\frac{\mu
_{2}}{\left\vert \mu _{2}\right\vert }}x\right) \left\vert \mu
_{1}\right\vert ^{n}\left\vert \mu _{2}\right\vert ^{n}d\mu _{1}d\mu _{2} \\
&=&\frac{1}{(2\pi )^{Q}}\int_{%
%TCIMACRO{\U{211d} }%
%BeginExpansion
\mathbb{R}
%EndExpansion
^{m}}e^{-i\mu _{1}(u)}\int_{%
%TCIMACRO{\U{211d} }%
%BeginExpansion
\mathbb{R}
%EndExpansion
^{m}}\sum_{k=0}^{\infty }\sum_{l=0}^{\infty }\varphi _{j}^{\alpha
}((2k+n)\left\vert \mu _{1}-\mu _{2}\right\vert ,(2l+n)\left\vert \mu
_{2}\right\vert ) \\
&&\times \left( f_{\frac{\mu _{1}-\mu _{2}}{\left\vert \mu _{1}-\mu
_{2}\right\vert }}^{\mu _{1}-\mu _{2}}\times _{\left\vert \mu _{1}-\mu
_{2}\right\vert }\varphi _{k}^{\left\vert \mu _{1}-\mu _{2}\right\vert
}\right) \left( A_{\frac{\mu _{1}-\mu _{2}}{\left\vert \mu _{1-}\mu
_{2}\right\vert }}x\right) \left( g_{\frac{\mu _{2}}{\left\vert \mu
_{2}\right\vert }}^{\mu _{2}}\times _{\left\vert \mu _{2}\right\vert
}\varphi _{l}^{\left\vert \mu _{2}\right\vert }\right) \left( A_{\frac{\mu
_{2}}{\left\vert \mu _{2}\right\vert }}x\right) \left\vert \mu _{1}-\mu
_{2}\right\vert ^{n}\left\vert \mu _{2}\right\vert ^{n}d\mu _{1}d\mu _{2}.
\end{eqnarray*}%
Then, applying the Plancherel theorem in variable $u$, the Minkowski's
inequality, the orthogonality of the special Hermite projections and (\ref%
{varphi}), we get that
\begin{eqnarray*}
&&\int_{\mathbb{R}^{2n}}\int_{\mathbb{R}^{m}}\left\vert T_{j}^{\alpha
}(f,g)(x,u)\right\vert ^{2}dxdu \\
&=&\frac{1}{(2\pi )^{Q}}\int_{\mathbb{R}^{2n}}\int_{\mathbb{R}%
^{m}}\left\vert \int_{%
%TCIMACRO{\U{211d} }%
%BeginExpansion
\mathbb{R}
%EndExpansion
^{m}}e^{-i\mu _{1}(u)}\int_{%
%TCIMACRO{\U{211d} }%
%BeginExpansion
\mathbb{R}
%EndExpansion
^{m}}\sum_{k=0}^{\infty }\sum_{l=0}^{\infty }\varphi _{j}^{\alpha
}((2k+n)\left\vert \mu _{1}-\mu _{2}\right\vert ,(2l+n)\left\vert \mu
_{2}\right\vert )\right. \\
&&\left. \times \left( f_{\frac{\mu _{1}-\mu _{2}}{\left\vert \mu _{1}-\mu
_{2}\right\vert }}^{\mu _{1}-\mu _{2}}\times _{\left\vert \mu _{1}-\mu
_{2}\right\vert }\varphi _{k}^{\left\vert \mu _{1}-\mu _{2}\right\vert
}\right) \left( A_{\frac{\mu _{1}-\mu _{2}}{\left\vert \mu _{1-}\mu
_{2}\right\vert }}x\right) \left( g_{\frac{\mu _{2}}{\left\vert \mu
_{2}\right\vert }}^{\mu _{2}}\times _{\left\vert \mu _{2}\right\vert
}\varphi _{l}^{\left\vert \mu _{2}\right\vert }\right) \left( A_{\frac{\mu
_{2}}{\left\vert \mu _{2}\right\vert }}x\right) \left\vert \mu _{1}-\mu
_{2}\right\vert ^{n}\left\vert \mu _{2}\right\vert ^{n}d\mu _{2}d\mu
_{1}\right\vert ^{2}dudx \\
&=&\frac{1}{(2\pi )^{Q}}\int_{\mathbb{R}^{2n}}\int_{\mathbb{R}%
^{m}}\left\vert \int_{%
%TCIMACRO{\U{211d} }%
%BeginExpansion
\mathbb{R}
%EndExpansion
^{m}}\sum_{k=0}^{\infty }\sum_{l=0}^{\infty }\varphi _{j}^{\alpha
}((2k+n)\left\vert \mu _{1}-\mu _{2}\right\vert ,(2l+n)\left\vert \mu
_{2}\right\vert )\right. \\
&&\left. \times \left( f_{\frac{\mu _{1}-\mu _{2}}{\left\vert \mu _{1}-\mu
_{2}\right\vert }}^{\mu _{1}-\mu _{2}}\times _{\left\vert \mu _{1}-\mu
_{2}\right\vert }\varphi _{k}^{\left\vert \mu _{1}-\mu _{2}\right\vert
}\right) \left( A_{\frac{\mu _{1}-\mu _{2}}{\left\vert \mu _{1-}\mu
_{2}\right\vert }}x\right) \left( g_{\frac{\mu _{2}}{\left\vert \mu
_{2}\right\vert }}^{\mu _{2}}\times _{\left\vert \mu _{2}\right\vert
}\varphi _{l}^{\left\vert \mu _{2}\right\vert }\right) \left( A_{\frac{\mu
_{2}}{\left\vert \mu _{2}\right\vert }}x\right) \left\vert \mu _{1}-\mu
_{2}\right\vert ^{n}\left\vert \mu _{2}\right\vert ^{n}d\mu _{2}\right\vert
^{2}d\mu _{1}dx \\
&\leq &\frac{C}{(2\pi )^{Q}}\int_{\mathbb{R}^{m}}\int_{%
%TCIMACRO{\U{211d} }%
%BeginExpansion
\mathbb{R}
%EndExpansion
^{2n}}\left( \int_{%
%TCIMACRO{\U{211d} }%
%BeginExpansion
\mathbb{R}
%EndExpansion
^{m}}\sum_{l=0}^{\infty }\left\vert \sum_{k=0}^{\infty }\varphi _{j}^{\alpha
}((2k+n)\left\vert \mu _{1}-\mu _{2}\right\vert ,(2l+n)\left\vert \mu
_{2}\right\vert )\right. \right. \\
&&\times \left. \left. \left( f_{\frac{\mu _{1}-\mu _{2}}{\left\vert \mu
_{1}-\mu _{2}\right\vert }}^{\mu _{1}-\mu _{2}}\times _{\left\vert \mu
_{1}-\mu _{2}\right\vert }\varphi _{k}^{\left\vert \mu _{1}-\mu
_{2}\right\vert }\right) \left( A_{\frac{\mu _{1}-\mu _{2}}{\left\vert \mu
_{1-}\mu _{2}\right\vert }}x\right) \right) \left\vert \left( g_{\frac{\mu
_{2}}{\left\vert \mu _{2}\right\vert }}^{\mu _{2}}\times _{\left\vert \mu
_{2}\right\vert }\varphi _{l}^{\left\vert \mu _{2}\right\vert }\right)
\left( A_{\frac{\mu _{2}}{\left\vert \mu _{2}\right\vert }}x\right)
\right\vert \left\vert \mu _{1}-\mu _{2}\right\vert ^{n}\left\vert \mu
_{2}\right\vert ^{n}d\mu _{2}\right) ^{2}dxd\mu _{1} \\
&\leq &\frac{C}{(2\pi )^{Q}}\int_{\mathbb{R}^{m}}\left( \int_{%
%TCIMACRO{\U{211d} }%
%BeginExpansion
\mathbb{R}
%EndExpansion
^{m}}\left\Vert \sum_{l=0}^{\infty }\left\vert \sum_{k=0}^{\infty }\varphi
_{j}^{\alpha }((2k+n)\left\vert \mu _{1}-\mu _{2}\right\vert
,(2l+n)\left\vert \mu _{2}\right\vert )\right. \right. \right. \\
&&\times \left. \left. \left. \left( f_{\frac{\mu _{1}-\mu _{2}}{\left\vert
\mu _{1}-\mu _{2}\right\vert }}^{\mu _{1}-\mu _{2}}\times _{\left\vert \mu
_{1}-\mu _{2}\right\vert }\varphi _{k}^{\left\vert \mu _{1}-\mu
_{2}\right\vert }\right) \left( A_{\frac{\mu _{1}-\mu _{2}}{\left\vert \mu
_{1-}\mu _{2}\right\vert }}x\right) \right\vert \left\vert \left( g_{\frac{%
\mu _{2}}{\left\vert \mu _{2}\right\vert }}^{\mu _{2}}\times _{\left\vert
\mu _{2}\right\vert }\varphi _{l}^{\left\vert \mu _{2}\right\vert }\right)
\circ A_{\frac{\mu _{2}}{\left\vert \mu _{2}\right\vert }}\right\vert
\right\Vert _{2}\left\vert \mu _{1}-\mu _{2}\right\vert ^{n}\left\vert \mu
_{2}\right\vert ^{n}d\mu _{2}\right) ^{2}d\mu _{1} \\
&\leq &\frac{C}{(2\pi )^{Q}}\int_{\mathbb{R}^{m}}\left( \int_{\mathbb{R}%
^{m}}\sum_{l=0}^{\infty }\left\Vert \sum_{k=0}^{\infty }\varphi _{j}^{\alpha
}((2k+n)\left\vert \mu _{1}-\mu _{2}\right\vert ,(2l+n)\left\vert \mu
_{2}\right\vert )\left( f_{\frac{\mu _{1}-\mu _{2}}{\left\vert \mu _{1}-\mu
_{2}\right\vert }}^{\mu _{1}-\mu _{2}}\times _{\left\vert \mu _{1}-\mu
_{2}\right\vert }\varphi _{k}^{\left\vert \mu _{1}-\mu _{2}\right\vert
}\right) \circ A_{\frac{\mu _{1}-\mu _{2}}{\left\vert \mu _{1-}\mu
_{2}\right\vert }}\right\Vert _{2}\right. \\
&&\times \left. \left\Vert \left( g_{\frac{\mu _{2}}{\left\vert \mu
_{2}\right\vert }}^{\mu _{2}}\times _{\left\vert \mu _{2}\right\vert
}\varphi _{l}^{\left\vert \mu _{2}\right\vert }\right) \circ A_{\frac{\mu
_{2}}{\left\vert \mu _{2}\right\vert }}\right\Vert _{\infty }\left\vert \mu
_{1}-\mu _{2}\right\vert ^{n}\left\vert \mu _{2}\right\vert ^{n}d\mu
_{2}\right) ^{2}d\mu _{1} \\
&\leq &\frac{C}{(2\pi )^{Q}}\int_{\mathbb{R}^{m}}\left( \int_{\mathbb{R}%
^{m}}\sum_{l=0}^{\infty }\left( \sum_{k=0}^{\infty }\left\vert \varphi
_{j}^{\alpha }((2k+n)\left\vert \mu _{1}-\mu _{2}\right\vert
,(2l+n)\left\vert \mu _{2}\right\vert )\right\vert ^{2}\right. \right. \\
&&\times \left. \left. \left\Vert \left( f_{\frac{\mu _{1}-\mu _{2}}{%
\left\vert \mu _{1}-\mu _{2}\right\vert }}^{\mu _{1}-\mu _{2}}\times
_{\left\vert \mu _{1}-\mu _{2}\right\vert }\varphi _{k}^{\left\vert \mu
_{1}-\mu _{2}\right\vert }\right) \circ A_{\frac{\mu _{1}-\mu _{2}}{%
\left\vert \mu _{1-}\mu _{2}\right\vert }}\right\Vert _{2}^{2}\right) ^{%
\frac{1}{2}}\left\Vert g_{\frac{\mu _{2}}{\left\vert \mu _{2}\right\vert }%
}^{\mu _{2}}\right\Vert _{2}\left\Vert \varphi _{l}^{\left\vert \mu
_{2}\right\vert }\right\Vert _{2}\left\vert \mu _{1}-\mu _{2}\right\vert
^{n}\left\vert \mu _{2}\right\vert ^{n}d\mu _{2}\right) ^{2}d\mu _{1} \\
&\leq &\frac{C}{(2\pi )^{Q}}\int_{\left\vert \mu _{2}\right\vert \leq
1}\sum_{l\leq \frac{1}{\left\vert \mu _{2}\right\vert }}^{\infty }\left\Vert
g_{\frac{\mu _{2}}{\left\vert \mu _{2}\right\vert }}^{\mu _{2}}\right\Vert
_{2}^{2}\left\Vert \varphi _{l}^{\left\vert \mu _{2}\right\vert }\right\Vert
_{2}^{2}\left\vert \mu _{2}\right\vert ^{2n}d\mu _{2}\int_{\mathbb{R}%
^{m}}\int_{\mathbb{R}^{m}}\sum_{l=0}^{\infty }\sum_{k=0}^{\infty }\left\vert
\varphi _{j}^{\alpha }((2k+n)\left\vert \mu _{1}-\mu _{2}\right\vert
,(2l+n)\left\vert \mu _{2}\right\vert )\right\vert ^{2} \\
&&\times \left\Vert f_{\frac{\mu _{1}-\mu _{2}}{\left\vert \mu _{1}-\mu
_{2}\right\vert }}^{\mu _{1}-\mu _{2}}\times _{\left\vert \mu _{1}-\mu
_{2}\right\vert }\varphi _{k}^{\left\vert \mu _{1}-\mu _{2}\right\vert
}\right\Vert _{2}^{2}\left\vert \det A_{\frac{\mu _{1}-\mu _{2}}{\left\vert
\mu _{1}-\mu _{2}\right\vert }}\right\vert ^{-1}\left\vert \mu _{1}-\mu
_{2}\right\vert ^{2n}d\mu _{2}d\mu _{1} \\
&\leq &\frac{C}{(2\pi )^{Q}}\int_{\left\vert \mu _{2}\right\vert \leq
1}\left( \sum_{l\leq \frac{1}{\left\vert \mu _{2}\right\vert }}^{\infty
}l^{n-1}\right) \left\Vert g_{\frac{\mu _{2}}{\left\vert \mu _{2}\right\vert
}}^{\mu _{2}}\right\Vert _{2}^{2}\left\vert \mu _{2}\right\vert ^{n-\delta
}d\mu _{2}\int_{\mathbb{R}^{m}}\int_{\mathbb{R}^{m}}\sum_{l=0}^{\infty
}\sum_{k=0}^{\infty }\left\vert \varphi _{j}^{\alpha }((2k+n)\left\vert \mu
_{1}\right\vert ,(2l+n)\left\vert \mu _{2}\right\vert )\right\vert ^{2} \\
&&\times \left\Vert f_{\frac{\mu _{1}}{\left\vert \mu _{1}\right\vert }%
}^{\mu _{1}}\times _{\left\vert \mu _{1}\right\vert }\varphi
_{k}^{\left\vert \mu _{1}\right\vert }\right\Vert _{2}^{2}\left\vert \det A_{%
\frac{\mu _{1}}{\left\vert \mu _{1}\right\vert }}\right\vert ^{-1}\left\vert
\mu _{1}\right\vert ^{2n}\left\vert \mu _{2}\right\vert ^{\delta }d\mu
_{2}d\mu _{1} \\
&\leq &\frac{C}{(2\pi )^{Q}}\int_{\left\vert \mu _{2}\right\vert \leq
1}\left\Vert g_{\frac{\mu _{2}}{\left\vert \mu _{2}\right\vert }}^{\mu
_{2}}\right\Vert _{2}^{2}\left\vert \mu _{2}\right\vert ^{-\delta }d\mu
_{2}\sum_{k=0}^{\infty }\int_{\mathbb{R}^{m}}\left\Vert f_{\frac{\mu _{1}}{%
\left\vert \mu _{1}\right\vert }}^{\mu _{1}}\times _{\left\vert \mu
_{1}\right\vert }\varphi _{k}^{\left\vert \mu _{1}\right\vert }\right\Vert
_{2}^{2}\left\vert \det A_{\frac{\mu _{1}}{\left\vert \mu _{1}\right\vert }%
}\right\vert ^{-1}\left\vert \mu _{1}\right\vert ^{2n} \\
&&\times \left( \int_{\mathbb{R}^{m}}\sum_{l=0}^{\infty }\left\vert \varphi
_{j}^{\alpha }((2k+n)\left\vert \mu _{1}\right\vert ,(2l+n)\left\vert \mu
_{2}\right\vert )\right\vert ^{2}\left\vert \mu _{2}\right\vert ^{\delta
}d\mu _{2}\right) d\mu _{1} \\
&\leq &\frac{C}{(2\pi )^{Q}}\int_{\left\vert \mu _{2}\right\vert \leq
1}\left\Vert g_{\frac{\mu _{2}}{\left\vert \mu _{2}\right\vert }}^{\mu
_{2}}\right\Vert _{2}^{2}\left\vert \mu _{2}\right\vert ^{-\delta }d\mu
_{2}\sum_{k=0}^{\infty }\int_{\mathbb{R}^{m}}\left\Vert f_{\frac{\mu _{1}}{%
\left\vert \mu _{1}\right\vert }}^{\mu _{1}}\times _{\left\vert \mu
_{1}\right\vert }\varphi _{k}^{\left\vert \mu _{1}\right\vert }\right\Vert
_{2}^{2}\left\vert \det A_{\frac{\mu _{1}}{\left\vert \mu _{1}\right\vert }%
}\right\vert ^{-1}\left\vert \mu _{1}\right\vert ^{2n} \\
&&\times \left( \sum_{l=0}^{\infty }(2l+n)^{-m-\delta }\int_{\mathbb{R}%
^{m}}\left\vert \varphi _{j}^{\alpha }((2k+n)\left\vert \mu _{1}\right\vert
,\left\vert \mu _{2}\right\vert )\right\vert ^{2}\left\vert \mu
_{2}\right\vert ^{\delta }d\mu _{2}\right) d\mu _{1} \\
&\leq &C2^{-j(2\alpha +1)}\left( \frac{1}{(2\pi )^{Q}}\sum_{k=0}^{\infty
}\int_{\mathbb{R}^{m}}\left\Vert f_{\frac{\mu _{1}}{\left\vert \mu
_{1}\right\vert }}^{\mu _{1}}\times _{\left\vert \mu _{1}\right\vert
}\varphi _{k}^{\left\vert \mu _{1}\right\vert }\right\Vert
_{2}^{2}\left\vert \det A_{\frac{\mu }{\left\vert \mu _{1}\right\vert }%
}\right\vert ^{-1}\left\vert \mu _{1}\right\vert ^{2n}d\mu _{1}\right)
\left( \int_{\left\vert \mu _{2}\right\vert \leq 1}\left\Vert g_{\frac{\mu
_{2}}{\left\vert \mu _{2}\right\vert }}^{\mu _{2}}\right\Vert
_{2}^{2}\left\vert \mu _{2}\right\vert ^{-\delta }d\mu _{2}\right) .
\end{eqnarray*}%
Applying (\ref{expansion}), the Plancherel theorem in variable $u$ and the
orthogonality of the special Hermite projections, we obtain that
\begin{eqnarray*}
\left\Vert f\right\Vert _{2}^{2} &=&\frac{1}{\left( 2\pi \right) ^{Q}}\int_{%
%TCIMACRO{\U{211d} }%
%BeginExpansion
\mathbb{R}
%EndExpansion
^{2n}}\int_{%
%TCIMACRO{\U{211d} }%
%BeginExpansion
\mathbb{R}
%EndExpansion
^{m}}\left\vert \int_{%
%TCIMACRO{\U{211d} }%
%BeginExpansion
\mathbb{R}
%EndExpansion
^{m}}e^{-i\mu _{1}(u)}\sum_{k=0}^{\infty }\left( \left( f_{\frac{\mu _{1}}{%
\left\vert \mu _{1}\right\vert }}^{\mu _{1}}\times _{\left\vert \mu
_{1}\right\vert }\varphi _{k}^{\left\vert \mu _{1}\right\vert }\right) \circ
A_{\frac{\mu _{1}}{\left\vert \mu _{1}\right\vert }}\right) (x)\left\vert
\mu _{1}\right\vert ^{n}d\mu _{1}\right\vert ^{2}dudx \\
&=&\frac{1}{\left( 2\pi \right) ^{Q}}\int_{%
%TCIMACRO{\U{211d} }%
%BeginExpansion
\mathbb{R}
%EndExpansion
^{2n}}\int_{%
%TCIMACRO{\U{211d} }%
%BeginExpansion
\mathbb{R}
%EndExpansion
^{m}}\left\vert \sum_{k=0}^{\infty }\left( \left( f_{\frac{\mu _{1}}{%
\left\vert \mu _{1}\right\vert }}^{\mu _{1}}\times _{\left\vert \mu
_{1}\right\vert }\varphi _{k}^{\left\vert \mu _{1}\right\vert }\right) \circ
A_{\frac{\mu _{1}}{\left\vert \mu _{1}\right\vert }}\right) (x)\right\vert
^{2}\left\vert \mu _{1}\right\vert ^{2n}d\mu _{1}dx \\
&=&\frac{1}{\left( 2\pi \right) ^{Q}}\sum_{k=0}^{\infty }\int_{%
%TCIMACRO{\U{211d} }%
%BeginExpansion
\mathbb{R}
%EndExpansion
^{2n}}\int_{%
%TCIMACRO{\U{211d} }%
%BeginExpansion
\mathbb{R}
%EndExpansion
^{m}}\left\vert \left( f_{\frac{\mu _{1}}{\left\vert \mu _{1}\right\vert }%
}^{\mu _{1}}\times _{\left\vert \mu _{1}\right\vert }\varphi
_{k}^{\left\vert \mu _{1}\right\vert }\right) \left( A_{\frac{\mu _{1}}{%
\left\vert \mu _{1}\right\vert }}x\right) \right\vert ^{2}\left\vert \mu
_{1}\right\vert ^{2n}d\mu _{1}dx \\
&=&\frac{1}{\left( 2\pi \right) ^{Q}}\sum_{k=0}^{\infty }\int_{%
%TCIMACRO{\U{211d} }%
%BeginExpansion
\mathbb{R}
%EndExpansion
^{m}}\left\Vert f_{\frac{\mu _{1}}{\left\vert \mu _{1}\right\vert }}^{\mu
_{1}}\times _{\left\vert \mu _{1}\right\vert }\varphi _{k}^{\left\vert \mu
_{1}\right\vert }\right\Vert _{2}^{2}\left\vert \det A_{\frac{\mu _{1}}{%
\left\vert \mu _{1}\right\vert }}\right\vert ^{-1}\left\vert \mu
_{1}\right\vert ^{2n}d\mu _{1}.
\end{eqnarray*}%
At the same time, (\ref{m1}) tells that%
\begin{equation*}
\int_{\left\vert \mu _{2}\right\vert \leq 1}\left\Vert (g_{1})_{\frac{\mu
_{2}}{\left\vert \mu _{2}\right\vert }}^{\mu _{2}}\right\Vert
_{2}^{2}\left\vert \mu _{2}\right\vert ^{-\delta }\,d\mu _{2}\leq
C2^{j(1+\gamma )\left( Q+2\delta \right) }\left\Vert g\right\Vert
_{L^{\infty }(B_{j}(\xi ,\frac{3}{4}))}^{2}.
\end{equation*}%
Thus,
\begin{eqnarray*}
&&\left\Vert T_{j}^{1}(f_{1},g_{1})\right\Vert _{L^{2}(B_{j}(\xi ,\frac{1}{4}%
))}^{2} \\
&=&\left\Vert T_{j}^{\alpha }(f_{1},g_{1})\right\Vert _{L^{2}(B_{j}(\xi ,%
\frac{1}{4}))}^{2} \\
&\leq &C2^{-j(2\alpha +1)}2^{j(1+\gamma )\left( Q+2\delta \right)
}\left\Vert f\right\Vert _{L^{2}(B_{j}(\xi ,\frac{3}{4}))}^{2}\left\Vert
g\right\Vert _{L^{\infty }(B_{j}(\xi ,\frac{3}{4}))}^{2}.
\end{eqnarray*}%
Taking the $L^{2}$ norm with respect to $\xi $, we get that
\begin{equation*}
\left\Vert T_{j}^{1}(f_{1},g_{1})\right\Vert _{2}\leq C2^{-j(\alpha +\frac{1%
}{2})}2^{j(1+\gamma )(\frac{Q}{2}+\delta )}\left\Vert f\right\Vert
_{2}\left\Vert g\right\Vert _{\infty }.
\end{equation*}%
Thus, whenever $\alpha >\frac{Q-1}{2}$, we can choose $\gamma ,\delta >0$
such that $\alpha >(1+\gamma )(\frac{Q}{2}+\delta )-\frac{1}{2}$, which
yields that there exists $\varepsilon >0$ such that
\begin{equation*}
\left\Vert T_{j}^{1}(f_{1},g_{1})\right\Vert _{2}\leq C2^{-\varepsilon
j}\left\Vert f\right\Vert _{2}\left\Vert g\right\Vert _{\infty }.
\end{equation*}%
The proof of Theorem \ref{Theorem212} is completed.
\end{proof}

\section{bilinear interpolation}

Because $p_{1}$ and $p_{2}$ are symmetric, we have obtained, in two sections
above, the boundedness of the bilinear Riesz means $S^{\alpha }$ at some
specific triples of points $(p_{1},p_{2},p)$ like
\begin{equation*}
(1,1,\frac{1}{2}), (2,2,1), (\infty,\infty,\infty), (1,2,\frac{2}{3}), (2,1,
\frac{2}{3}),(1,\infty ,1),(\infty,1,1),(2,\infty,2),(\infty,2,2)\text{.}
\end{equation*}
We can obtain the intermediate boundedness of $S^{\alpha}$ by using of the
bilinear interpolation via complex method adapted to the setting of analytic
families or real method in \cite{Graf}. Bernicot et al. \cite{Bern}
described how to make use of real method. We outline this argument for reader's convenience.

Consider a spherical decomposition of $S^{\alpha }$ as
\begin{equation*}
S^{\alpha }=\sum_{j=0}^{\infty }2^{-j\alpha }T_{j,\alpha },
\end{equation*}%
where
\begin{equation*}
T_{j,\alpha }(f,g)=\int_{0}^{\infty }\int_{0}^{\infty }\varphi _{j,\alpha
}\left( \lambda _{1},\lambda _{2}\right) P_{\lambda _{1}}fP_{\lambda
_{2}}gd\mu (\lambda _{1})d\mu (\lambda _{2})
\end{equation*}%
and
\begin{equation*}
\varphi _{j,\alpha }\left( s,t\right) =2^{j\alpha }(1-s-t)_{+}^{\alpha
}\varphi (2^{j}\left( 1-s-t\right) ).
\end{equation*}%
In the preceding sections, we have actually obtained the estimates of the
form
\begin{equation}
\left\Vert T_{j,\alpha }\right\Vert _{L^{p_{1}}\times L^{p_{2}}\rightarrow
L^{p}}\leq C2^{j\alpha (p_{1},p_{2})}  \label{Tj}
\end{equation}%
at some triples of points $(p_{1},p_{2},p)$. Since $\alpha (p_{1,}p_{2})$
only depends on the point $(p_{1,}p_{2},p)$, (\ref{Tj}) also holds for any
other $T_{j,\alpha ^{\prime }}$, i.e.,
\begin{equation*}
\left\Vert T_{j,\alpha ^{\prime }}\right\Vert _{L^{p_{1}}\times
L^{p_{2}}\rightarrow L^{p}}\leq C2^{j\alpha (p_{1},p_{2})}.
\end{equation*}%
So, fixing $j$ and $\alpha ^{\prime }$ and applying bilinear real
interpolation theorem on $T_{j,\alpha ^{\prime }}$, we can conclude that if
the point $(p_{1},p_{2},p)$ satisfies
\begin{equation*}
\left( \frac{1}{p_{1}},\frac{1}{p_{2}},\frac{1}{p}\right) =(1-\theta )\left(
\frac{1}{p_{1}^{0}},\frac{1}{p_{2}^{0}},\frac{1}{p^{0}}\right) +\theta
\left( \frac{1}{p_{1}^{1}},\frac{1}{p_{2}^{1}},\frac{1}{p^{1}}\right)
\end{equation*}%
for some $\theta \in (0,1)$ and $%
(p_{1}^{0},p_{2}^{0},p^{0}),(p_{1}^{1},p_{2}^{1},p^{1})$, \thinspace we have
that
\begin{equation*}
\left\Vert T_{j,\alpha ^{\prime }}\right\Vert _{L^{p_{1}}\times
L^{p_{2}}\rightarrow L^{p}}\leq C2^{j((1-\theta )\alpha
(p_{1}^{0},p_{2}^{0})+\theta \alpha (p_{1}^{1},p_{2}^{1}))}.
\end{equation*}%
Define $\alpha (p_{1,}p_{2})=(1-\theta )\alpha (p_{1}^{0},p_{2}^{0})+\theta
\alpha (p_{1}^{1},p_{2}^{1})$ and let $\alpha ^{\prime }=\alpha $. It
follows that
\begin{equation*}
\left\Vert S^{\alpha }\right\Vert _{L^{p_{1}}\times L^{p_{2}}\rightarrow
L^{p}}\leq \sum_{j=0}^{\infty }2^{-j\alpha }\left\Vert T_{j,\alpha
}\right\Vert _{L^{p_{1}}\times L^{p_{2}}\rightarrow L^{p}}\leq
C\sum_{j=0}^{\infty }2^{-j\alpha }2^{j\alpha (p_{1,}p_{2})}.
\end{equation*}%
Thus, when $\alpha >\alpha (p_{1,}p_{2})$, we have $\left\Vert S^{\alpha
}\right\Vert _{L^{p_{1}}\times L^{p_{2}}\rightarrow L^{p}}\leq C$, i.e., $%
S^{\alpha }$ is bounded from $L^{p_{1}}(\mathbb{G})\times L^{p_{1}}(\mathbb{G%
})$ to $L^{p}(\mathbb{G})$.

Based on the above argument, we can get the full results on the $%
L^{p_{1}}\times L^{p_{2}}\rightarrow L^{p}$ boundedness of the operator $%
S^{\alpha }$.

\includegraphics[scale=0.9]{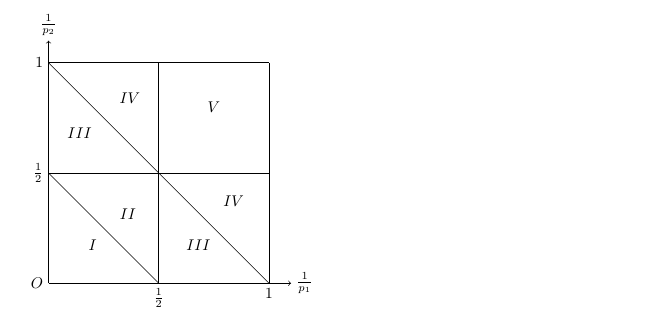}

\begin{theorem}
Let $1\leq p_{1},p_{2}\leq \infty $ and $1/p=1/p_{1}+1/p_{2}$.

(1)(region I) For $2\leq p_{1},p_{2}\leq \infty $ and $p\geq 2$, if $\alpha
>Q\left( 1-\frac{1}{p}\right) -\frac{1}{2}$, then $S^{\alpha }$ is bounded
from $L^{p_{1}}(\mathbb{G})\times L^{p_{1}}(\mathbb{G})$ to $L^{p}(\mathbb{G}%
)$.

(2)(region II) For $2\leq p_{1},p_{2}\leq \infty $ and $1\leq p\leq 2$, if $%
\alpha >(Q-1)\left( 1-\frac{1}{p}\right) $, then $S^{\alpha }$ is bounded
from $L^{p_{1}}(\mathbb{G})\times L^{p_{1}}(\mathbb{G})$ to $L^{p}(\mathbb{G}%
)$.

(3)(region III) For $1\leq p_{1}\leq 2\leq p_{2}\leq \infty $ and $p\geq 1$%
, if $\alpha >Q\left( \frac{1}{2}-\frac{1}{p_{2}}\right) -\left( 1-\frac{1}{p%
}\right) $, then $S^{\alpha }$ is bounded from $L^{p_{1}}(\mathbb{G})\times
L^{p_{1}}(\mathbb{G})$ to $L^{p}(\mathbb{G})$; For $1\leq p_{2}\leq 2\leq
p_{1}\leq \infty $ and $p\geq 1$, if $\alpha >Q\left( \frac{1}{2}-\frac{1}{%
p_{1}}\right) -\left( 1-\frac{1}{p}\right) $, then $S^{\alpha }$ is bounded
from $L^{p_{1}}(\mathbb{G})\times L^{p_{1}}(\mathbb{G})$ to $L^{p}(\mathbb{G}%
)$.

(4)(region IV) For $1\leq p_{1}\leq 2\leq p_{2}\leq \infty $ and $p\leq 1$
, if $\alpha >Q\left( \frac{1}{p_{1}}-\frac{1}{2}\right) $, then $S^{\alpha
} $ is bounded from $L^{p_{1}}(\mathbb{G})\times L^{p_{1}}(\mathbb{G})$ to $%
L^{p}(\mathbb{G})$; For $1\leq p_{2}\leq 2\leq p_{1}\leq \infty $ and $p\leq
1$, if $\alpha >Q\left( \frac{1}{p_{2}}-\frac{1}{2}\right) $, then $%
S^{\alpha }$ is bounded from $L^{p_{1}}(\mathbb{G})\times L^{p_{1}}(\mathbb{G%
})$ to $L^{p}(\mathbb{G})$.

(5)(region V) For $1\leq p_{1},$ $p_{2}\leq 2$, if $\alpha >Q\left( \frac{1%
}{p}-1\right) $, then $S^{\alpha }$ is bounded from $L^{p_{1}}(\mathbb{G}%
)\times L^{p_{1}}(\mathbb{G})$ to $L^{p}(\mathbb{G}).$
\end{theorem}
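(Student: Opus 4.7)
I would proceed exactly along the lines of the framework described in Section~7: the dyadic decomposition $S^{\alpha}=\sum_{j\ge 0}2^{-j\alpha}T_{j,\alpha}$, combined with bilinear interpolation of the individual pieces $T_{j,\alpha'}$. The key structural fact, already emphasized in Section~7, is that the $L^{p_{1}}\times L^{p_{2}}\to L^{p}$ operator norm of $T_{j,\alpha'}$ admits a bound of the form $C\,2^{j\alpha(p_{1},p_{2})}$ in which the exponent depends only on the triple $(p_{1},p_{2},p)$ and not on $\alpha'$ itself. Once this exponent is known at enough endpoints, the intermediate values of $\alpha(p_{1},p_{2})$ are forced by interpolation, and the geometric series $\sum_{j}2^{-j(\alpha-\alpha(p_{1},p_{2}))}$ converges whenever $\alpha>\alpha(p_{1},p_{2})$.

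First I would collect the available endpoint estimates. Reading off Theorems \ref{mainTh}, \ref{1infty}, \ref{infty} and \ref{Theorem212}, together with the symmetry $p_{1}\leftrightarrow p_{2}$, the bounds provide $\alpha(p_{1},p_{2})$ at nine lattice points of $\{0,1/2,1\}^{2}$ in $(1/p_{1},1/p_{2})$-coordinates: the value $Q$ at $(1,1)$; the value $Q/2$ at each of $(1,1/2)$, $(1/2,1)$, $(1,0)$, $(0,1)$; the value $0$ at $(1/2,1/2)$; the value $(Q-1)/2$ at $(1/2,0)$ and $(0,1/2)$; and the value $Q-1/2$ at $(0,0)$. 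Each of the five regions in the statement is a triangle in the $(1/p_{1},1/p_{2})$-plane whose vertices are among these nine points (Region~V is the quadrilateral already treated by Theorem \ref{mainTh}, so no further work is needed there).

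Next, for each remaining region I would express any interior point as a convex combination of the three vertex points and invoke bilinear interpolation on $T_{j,\alpha'}$ between the corresponding vertex estimates. The resulting exponent is the same convex combination of vertex exponents, and so is an affine function of $(1/p_{1},1/p_{2})$ over the triangle. A direct algebraic check then matches this affine function with the threshold stated in the theorem. For instance, in Region~I the vertices $(0,0),(1/2,0),(0,1/2)$ with exponents $Q-1/2,\ (Q-1)/2,\ (Q-1)/2$ and weights $(1-2/p_{1}-2/p_{2},\ 2/p_{1},\ 2/p_{2})$ yield exponent $Q(1-1/p)-1/2$; Region~II with vertices $(1/2,1/2),(1/2,0),(0,1/2)$ and exponents $0,(Q-1)/2,(Q-1)/2$ yields $(Q-1)(1-1/p)$; Region~III with vertices $(1/2,0),(1/2,1/2),(1,0)$ and exponents $(Q-1)/2,0,Q/2$ yields $Q(1/2-1/p_{2})-(1-1/p)$; and Region~IV with vertices $(1/2,1/2),(1,1/2),(1,0)$ and exponents $0,Q/2,Q/2$ yields $Q(1/p_{1}-1/2)$. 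The symmetric halves of Regions~III and~IV follow by the $p_{1}\leftrightarrow p_{2}$ exchange. Summing the geometric series then finishes the proof.

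The main obstacle is the justification of bilinear interpolation at three endpoints simultaneously, since this is known to be more delicate than the linear Riesz--Thorin theorem. My plan is to apply the real-method bilinear interpolation argument of \cite{Bern} to each pair of vertices in turn, which produces the advertised convex-combination bound for $T_{j,\alpha'}$ modulo at worst logarithmic/polynomial losses in $j$; such losses are harmless because they are absorbed into the geometric factor $2^{-j\alpha}$ as soon as $\alpha$ is taken strictly greater than the stated threshold. A secondary point is to verify that every $(1/p_{1},1/p_{2})$ in the closure of a given region really does lie in the convex hull of the three nominated vertices, which is a two-dimensional geometric check (because of $1/p=1/p_{1}+1/p_{2}$) and reduces to writing the barycentric weights and observing that their non-negativity is exactly the defining inequality of the region.
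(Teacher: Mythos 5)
Your proposal follows essentially the same route as the paper's Section 7: the normalized dyadic decomposition $S^{\alpha}=\sum_{j}2^{-j\alpha}T_{j,\alpha}$, the observation that the exponent $\alpha(p_{1},p_{2})$ in $\|T_{j,\alpha'}\|\leq C2^{j\alpha(p_{1},p_{2})}$ is independent of $\alpha'$, bilinear real interpolation between the endpoint triples supplied by Theorems \ref{mainTh}, \ref{1infty}, \ref{infty} and \ref{Theorem212}, and summation of the geometric series. Your explicit tabulation of the nine vertex exponents and the barycentric-weight computations for each region are correct and simply make precise what the paper leaves implicit.
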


\end{document}